
\documentclass[12pt]{amsart}
\usepackage{amsmath,amsthm,amsfonts,amssymb,mathrsfs}
\date{\today}

\input xymatrix
\xyoption{all}

\usepackage{color}

\usepackage{hyperref}

  \setlength{\textwidth}{18.5truecm}
   \setlength{\textheight}{24.2truecm}
   \setlength{\oddsidemargin}{-28.5pt}
   \setlength{\evensidemargin}{-28.5pt}
   \setlength{\topmargin}{-35pt}


\newtheorem{theorem}{Theorem}[section]

\newtheorem{proposition}[theorem]{Proposition}
\newtheorem{corollary}[theorem]{Corollary}
\newtheorem{lemma}[theorem]{Lemma}
\theoremstyle{definition}
\newtheorem{remark}[theorem]{Remark}


\begin{document}

\title[The monoid of order isomorphisms of principal filters ...]{The monoid of order isomorphisms of principal filters of a power of the positive integers}

\author[Oleg~Gutik and Taras~Mokrytskyi]{Oleg~Gutik and Taras~Mokrytskyi}
\address{Faculty of Mathematics, National University of Lviv,
Universytetska 1, Lviv, 79000, Ukraine}
\email{ogutik@gmail.com, ovgutik@yahoo.com, tmokrytskyi@gmail.com}

\keywords{Semigroup, partial map, inverse semigroup, least group congruence, permutation group, bicyclic monoid, semidirect product, semitopological semigroup, topological semigroup, }

\subjclass[2010]{20M18, 20M20, 22A15, 54D40, 54D45, 54H10.}

\begin{abstract}
Let $n$ be any positive integer and $\mathscr{I\!\!P\!F}(\mathbb{N}^n)$ be the semigroup of all order isomorphisms between principal filters of the $n$-th power of the set of positive integers $\mathbb{N}$ with the product order. We study algebraic properties of the semigroup $\mathscr{I\!\!P\!F}(\mathbb{N}^n)$. In particular, we show that $\mathscr{I\!\!P\!F}(\mathbb{N}^n)$ is a bisimple, $E$-unitary, $F$-inverse semigroup, describe Green's relations on $\mathscr{I\!\!P\!F}(\mathbb{N}^n)$ and its maximal subgroups. We show that
the semigroup $\mathscr{I\!\!P\!F}(\mathbb{N}^n)$ is isomorphic to the semidirect product of the direct $n$-th power of the bicyclic monoid ${\mathscr{C}}^n(p,q)$ by the group of permutation  $\mathscr{S}_n$. Also we prove that
every non-identity congruence $\mathfrak{C}$ on the semigroup $\mathscr{I\!\!P\!F}(\mathbb{N}^n)$ is group and describe the least group congruence on $\mathscr{I\!\!P\!F}(\mathbb{N}^n)$. We show that every Hausdorff shift-continuous topology on $\mathscr{I\!\!P\!F}(\mathbb{N}^n)$ is discrete and discuss embedding of the semigroup $\mathscr{I\!\!P\!F}(\mathbb{N}^n)$ into compact-like topological semigroups.

\end{abstract}

\maketitle

\section{Introduction and preliminaries}

We shall follow the terminology of~\cite{Carruth-Hildebrant-Koch-1983-1986, Clifford-Preston-1961-1967, Engelking-1989, Petrich-1984, Ruppert-1984}.

In this paper we shall denote the first infinite ordinal by $\omega$, the set of integers by $\mathbb{Z}$, the set of positive integers by $\mathbb{N}$, the set of non-negative integers by $\mathbb{N}_0$, the additive group of integers by $\mathbb{Z}(+)$ and the symmetric group of degree $n$ by $\mathscr{S}_n$, i.e., $\mathscr{S}_n$ is the group of all permutations of an $n$-element set. All topological spaces, considered in this paper, are assumed to be  Hausdorff.

Let $(X,\leqslant)$ be a partially ordered set (a poset). For an arbitrary $x\in X$ we denote
 \begin{equation*}
{\uparrow}x=\left\{y\in X\colon x\leqslant y\right\} \qquad \hbox{and} \qquad {\downarrow}x=\left\{y\in X\colon y\leqslant x\right\}.
\end{equation*}
The sets ${\uparrow}x$ and ${\downarrow}x$ are called the \emph{principal filter} and the \emph{principal ideal}, respectively, generated by the element $x\in X$. A map $\alpha\colon (X,\leqslant)\to(Y,\eqslantless)$ from poset $(X,\leqslant)$ into a poset $(Y,\eqslantless)$ is called \emph{monotone} or \emph{order preserving} if $x\leqslant y$ in $(X,\leqslant)$ implies that $x\alpha\eqslantless y\alpha$ in $(Y,\eqslantless)$. A monotone map $\alpha\colon (X,\leqslant)\to(Y,\eqslantless)$ is said to be an \emph{order isomorphism} if it is bijective and its converse $\alpha^{-1}\colon(Y,\eqslantless)\to(X,\leqslant)$ is monotone.

An semigroup $S$ is called {\it inverse} if for any element $x\in S$ there exists a unique $x^{-1}\in S$ such that $xx^{-1}x=x$ and $x^{-1}xx^{-1}=x^{-1}$. The element $x^{-1}$ is called the {\it inverse of} $x\in S$. If $S$ is an inverse semigroup, then the function $\operatorname{inv}\colon S\to S$ which assigns to every element $x$ of $S$ its inverse element $x^{-1}$ is called the {\it inversion}.

A congruence $\mathfrak{C}$ on a semigroup $S$ is called \emph{non-trivial} if $\mathfrak{C}$ is distinct from universal and identity congruences on $S$, and a \emph{group congruence} if the quotient semigroup $S/\mathfrak{C}$ is a group.

If $S$ is a semigroup, then we shall denote the subset of all idempotents in $S$ by $E(S)$. If $S$ is an inverse semigroup, then $E(S)$ is closed under multiplication and we shall refer to $E(S)$ as a \emph{band} (or the \emph{band of} $S$). Then the semigroup operation on $S$ determines the following partial order $\preccurlyeq$
on $E(S)$: $e\preccurlyeq f$ if and only if $ef=fe=e$. This order is called the {\em natural partial order} on $E(S)$. A \emph{semilattice} is a commutative semigroup of idempotents. A semilattice $E$ is called {\em linearly ordered} or a \emph{chain}
if its natural  order is a linear order.   By
$(\mathscr{P}(X),\cap)$ we shall denote the
semilattice of all subsets of a set
$X$ with the semilattice
operation of ``intersection''.

If $S$ is a semigroup, then we shall denote the Green relations on $S$ by $\mathscr{R}$, $\mathscr{L}$, $\mathscr{J}$, $\mathscr{D}$ and $\mathscr{H}$ (see \cite{Clifford-Preston-1961-1967}). A semigroup $S$ is called \emph{simple} if $S$ does not contain proper two-sided ideals and \emph{bisimple} if $S$ has only one $\mathscr{D}$-class.

Hereafter we shall assume that $\lambda$ is an infinite cardinal.
If $\alpha\colon \lambda\rightharpoonup \lambda$ is a partial map, then we shall denote
the domain and the range of $\alpha$ by $\operatorname{dom}\alpha$ and $\operatorname{ran}\alpha$, respectively.

Let $\mathscr{I}_\lambda$ denote the set of all partial one-to-one
transformations of an infinite set $X$ of cardinality $\lambda$
together with the following semigroup operation:
\begin{equation*}
x(\alpha\beta)=(x\alpha)\beta \quad \hbox{if} \quad
x\in\operatorname{dom}(\alpha\beta)=\left\{
y\in\operatorname{dom}\alpha\mid
y\alpha\in\operatorname{dom}\beta\right\},\quad \hbox{for }
\alpha,\beta\in\mathscr{I}_\lambda.
\end{equation*}
The semigroup $\mathscr{I}_\lambda$ is called the \emph{symmetric
inverse semigroup} over the set $\lambda$~(see \cite[Section~1.9]{Clifford-Preston-1961-1967}).
The symmetric inverse semigroup was introduced by
Wagner~\cite{Vagner-1952} and it plays a major role in the theory of
semigroups.

The \emph{bicyclic semigroup} (or the \emph{bicyclic monoid}) ${\mathscr{C}}(p,q)$ is the semigroup with the identity $1$ generated by elements $p$ and $q$ and the relation $pq=1$.

\begin{remark}\label{remark-1.1}
We observe that the bicyclic semigroup is isomorphic to the
semigroup $\mathscr{C}_{\mathbb{N}}(\alpha,\beta)$ which is
generated by partial transformations $\alpha$ and $\beta$ of the set
of positive integers $\mathbb{N}$, defined as follows:
$(n)\alpha=n+1$ if $n\geqslant 1$ and $(n)\beta=n-1$ if $n> 1$ (see Exercise~IV.1.11$(ii)$ in \cite{Petrich-1984}).
\end{remark}

If $T$ is a semigroup, then we say that a subsemigroup $S$ of $T$ is a \emph{bicyclic subsemigroup of} $T$ if $S$ is isomorphic to the bicyclic semigroup ${\mathscr{C}}(p,q)$.

\smallskip

A (\emph{semi})\emph{topological} \emph{semigroup} is a topological space with a (separately) continuous semigroup operation. An inverse topological semigroup with continuous inversion is called a \emph{topological inverse semigroup}.

A topology $\tau$ on a semigroup $S$ is called:
\begin{itemize}
  \item \emph{semigroup} if $(S,\tau)$ is a topological semigroup;
  \item \emph{shift-continuous} if $(S,\tau)$ is a semitopological semigroup.
\end{itemize}

The bicyclic semigroup is bisimple and every one of its congruences is either trivial or a group congruence. Hence, every homomorphism of the bicyclic semigroup is either an isomorphism or a group homomorphism, and moreover every non-isomorphic image of ${\mathscr{C}}(p,q)$  is a cyclic group~(see \cite[Corollary~1.32]{Clifford-Preston-1961-1967}). The bicyclic semigroup plays an important role in algebraic theory of semigroups and in the theory of topological semigroups. For example a well-known Andersen's result~\cite{Andersen-1952} states that a ($0$--)simple semigroup with an idempotent is completely ($0$--)simple if and only if it does not contain an isomorphic copy of the bicyclic semigroup. The bicyclic monoid admits only the discrete semigroup Hausdorff topology. Bertman and  West in \cite{Bertman-West-1976} extended this result for the case of Hausdorff semitopological semigroups. Stable and $\Gamma$-compact topological semigroups do not contain the bicyclic monoid~\cite{Anderson-Hunter-Koch-1965, Hildebrant-Koch-1986}. The problem of embedding the bicyclic monoid into compact-like topological semigroups was studied in \cite{Banakh-Dimitrova-Gutik-2009, Banakh-Dimitrova-Gutik-2010, Gutik-Repovs-2007}.
Independently to Eberhart-Selden results on topolozabilty of the bicyclic semigroup, in \cite{Taimanov-1973} Taimanov constructed a commutative semigroup $\mathfrak{A}_\kappa$ of cardinality $\kappa$ which admits only the discrete semigroup topology. Also, Taimanov \cite{Taimanov-1975} gave sufficient conditions for a commutative semigroup to have a non-discrete semigroup topology. In the paper \cite{Gutik-2016} it was showed that for the Taimanov semigroup $\mathfrak{A}_\kappa$ from \cite{Taimanov-1973} the following conditions hold:
every semigroup $T_1$-topology $\tau$ on the semigroup $\mathfrak{A}_\kappa$ is discrete; $\mathfrak A_\kappa$ is closed in any $T_1$-topological semigroup containing $\mathfrak A_\kappa$
and every homomorphic non-isomorphic image of $\mathfrak{A}_\kappa$ is a zero-semigroup.

Let $\alpha$ be an arbitrary ordinal, let $H_\alpha$ denote the set of all ordinal numbers less than $\omega^\alpha$, let $+$ denote usual
ordinal addition, and let $W_\alpha = H_\alpha\times H_\alpha$. An operation is defined on $W_\alpha$ by
\begin{equation*}
  (\beta,\gamma)(\delta,\eta)=(\beta + (\max\{\gamma,\delta\} -\gamma),\eta + (\max\{\gamma,\delta\} - \delta)).
\end{equation*}
Then  under this operation, $W_\alpha$ is a bisimple inverse semigroup with identity $(0,0)$ which is called  the
\emph{$\alpha$-bicyclic semigroup} \cite{Hogan-1973}. Note that $W_1$ is isomorphic to the bicyclic semigroup.
In \cite{Hogan-1984} Hogan proved that for any ordinal $\alpha$ every semigroup Hausdorff locally compact topology on the $\alpha$-bicyclic semigroup is discrete. Also, in \cite{Selden_A-1985} Annie Selden constructed a non-discrete semigroup Hausdorff topology on the $2$-bicyclic semigroup. But Bardyla found a gap in the proof of Theorem 2.9 of \cite{Hogan-1984} and in \cite{Bardyla-2016a} he constructed a non-discrete semigroup Hausdorff locally compact topology on the $\omega+1$-bicyclic semigroup. Also, in \cite{Bardyla-2016a} Bardyla showed that the statement of Theorem 2.9 from \cite{Hogan-1984} is true in the case when $\alpha\leqslant\omega$ and in \cite{Bardyla-2018} for every ordinal $\alpha\leqslant\omega$  he described all shift-continuous locally compact Hausdorff topologies on the $\alpha$-bicyclic monoid.

In \cite{Fihel-Gutik-2011} it is proved that the discrete topology is the unique shift-continuous Hausdorff topology on the extended bicyclic semigroup $\mathscr{C}_{\mathbb{Z}}$. We observe that for many ($0$-)bisimple semigroups   $S$ the following statement holds: \emph{every shift-continuous Hausdorff Baire (in particular locally compact) topology on $S$ is discrete} (see \cite{Chuchman-Gutik-2010, Gutik-2018, Gutik-Maksymyk-2016a, Gutik-Pozdnyakova-2014, Gutik-Repovs-2011, Gutik-Repovs-2012}).

A graph inverse semigroup $G(E)$ is a semigroup constructed from a directed graph $E$, where, roughly speaking, elements correspond to paths in the graph. These semigroups were introduced by Ash and Hall in \cite{Ash-Hall-1975} in order to show that every partial order can be realized as that of the nonzero $\mathscr{J}$-classes of an inverse semigroup. Graph inverse semigroups also generalize polycyclic monoids, first defined by Nivat and Perrot in \cite{Nivat-Perrot-1970}.
In the paper \cite{Mesyan-Mitchell-Morayne-Peresse-2016} Mesyan, Mitchell, Morayne and P\'{e}resse showed that if $E$ is a finite graph, then the only locally compact Hausdorff semigroup topology on the graph inverse semigroup $G(E)$ is the discrete topology. In \cite{Bardyla-Gutik-2016} it was proved that the conclusion of this statement also holds for graphs $E$ consisting of one vertex and infinitely many loops (i.e., infinitely generated polycyclic monoids). We observe that graph inverse semigroups which admit
only discrete locally compact semigroup topology were characterized in \cite{Bardyla-20??}. An amazing dichotomy for the bicyclic monoid with adjoined zero $\mathscr{C}^0={\mathscr{C}}(p,q)\sqcup\{0\}$ was proved in \cite{Gutik-2015}: \emph{every Hausdorff locally compact semitopological bicyclic monoid $\mathscr{C}^0$ with adjoined zero is either compact or discrete}.
The above dichotomy was extended by Bardyla in \cite{Bardyla-2016} to locally compact $\lambda$-polycyclic semitopological monoids, in \cite{Bardyla-2018a} to locally compact semitopological graph inverse semigroups and to locally compact semitopological interassociates of the bicyclic monoid with adjoined zero \cite{Gutik-Maksymyk-2016}.

We observe that some classes of inverse semigroups of partial transformations with cofinite domains and ranges have algebraic properties similar to the bicyclic semigroup.
In  \cite{Gutik-Repovs-2015} it was shown that the semigroup
$\mathscr{I}^{\mathrm{cf}}_\lambda$ of injective partial cofinite selfmaps of a cardinal $\lambda$  is a bisimple inverse semigroup
and that for every non-empty chain $L$ in
$E(\mathscr{I}^{\mathrm{cf}}_\lambda)$ there exists an inverse
subsemigroup $S$ of $\mathscr{I}^{\mathrm{cf}}_\lambda$ such that
$S$ is isomorphic to the bicyclic semigroup and $L\subseteq E(S)$,
and proved that every non-trivial congruence on
$\mathscr{I}^{\mathrm{cf}}_\lambda$ is a group congruence. Also, the structure of the quotient semigroup $\mathscr{I}^{\mathrm{cf}}_\lambda/\mathfrak{C}_{\mathbf{mg}}$, where $\mathfrak{C}_{\mathbf{mg}}$ is the least group congruence on $\mathscr{I}^{\mathrm{cf}}_\lambda$, was described there.

The semigroups $\mathscr{I}_{\infty}^{\!\nearrow}(\mathbb{N})$ and $\mathscr{I}_{\infty}^{\!\nearrow}(\mathbb{Z})$ of injective monotone partial selfmaps with cofinite domains and images of positive integers and integers, respectively, were studied in \cite{Gutik-Repovs-2011} and \cite{Gutik-Repovs-2012}. There it was proved that the semigroups $\mathscr{I}_{\infty}^{\!\nearrow}(\mathbb{N})$ and $\mathscr{I}_{\infty}^{\!\nearrow}(\mathbb{Z})$ are bisimple and  every non-trivial homomorphic image of $\mathscr{I}_{\infty}^{\!\nearrow}(\mathbb{N})$ and $\mathscr{I}_{\infty}^{\!\nearrow}(\mathbb{Z})$ are isomorphic to  $\mathbb{Z}(+)$  and $\mathbb{Z}(+)\times\mathbb{Z}(+)$, respectively.

In \cite{Gutik-Pozdnyakova-2014} Gutik and Podniakova studied  the semigroup
$\mathscr{I\!O}\!_{\infty}(\mathbb{Z}^n_{\operatorname{lex}})$ of monotone injective partial selfmaps of the set of $L_n\times_{\operatorname{lex}}\mathbb{Z}$ having cofinite domain and image, where $L_n\times_{\operatorname{lex}}\mathbb{Z}$ is the lexicographic product of $n$-element chain and the set of integers with the usual linear order. There it was shown that the semigroup $\mathscr{I\!O}\!_{\infty}(\mathbb{Z}^n_{\operatorname{lex}})$ is
bisimple, $\mathscr{I\!O}\!_{\infty}(\mathbb{Z}^n_{\operatorname{lex}})$ is finitely generated, every automorphism of $\mathscr{I\!O}\!_{\infty}(\mathbb{Z})$ is inner and showed that in the case $n\geqslant 2$ the semigroup $\mathscr{I\!O}\!_{\infty}(\mathbb{Z}^n_{\operatorname{lex}})$ has non-inner automorphisms. Also, in \cite{Gutik-Pozdnyakova-2014} projective congruences of the semigroup $\mathscr{I\!O}\!_{\infty}(\mathbb{Z}^n_{\operatorname{lex}})$ were studied.
In \cite{Gutik-Pozdnyakova-2014} it was proved that for every positive integer $n$ the quotient semigroup
$\mathscr{I\!O}\!_{\infty}(\mathbb{Z}^n_{\operatorname{lex}})/\mathfrak{C}_{\mathbf{mg}}$, where $\mathfrak{C}_{\mathbf{mg}}$ is a least group congruence on $\mathscr{I\!O}\!_{\infty}(\mathbb{Z}^n_{\operatorname{lex}})$, is isomorphic to the direct power $\left(\mathbb{Z}(+)\right)^{2n}$. The structure of the sublattice of congruences on $\mathscr{I\!O}\!_{\infty}(\mathbb{Z}^n_{\operatorname{lex}})$ which are contained in the least group congruence is described in \cite{Gutik-Pozdniakova-2014}.

On the other hand the semigroup $\mathscr{P\!O}\!_{\infty}(\mathbb{N}^2_{\leqslant})$ of monotone injective partial selfmaps with cofinite domains and images of the square $\mathbb{N}^2$ with the product order has more complicated structure \cite{Gutik-Pozdniakova-2016}.
In particular, there was proved that $\mathscr{D}=\mathscr{J}$  in $\mathscr{P\!O}\!_{\infty}(\mathbb{N}^2_{\leqslant})$. In \cite{Gutik-Pozdniakova-2016a} it was proved that the natural partial order $\preccurlyeq$ on $\mathscr{P\!O}\!_{\infty}(\mathbb{N}^2_{\leqslant})$ coincides with the natural partial order which is induced from symmetric inverse monoid $\mathscr{I}_{\mathbb{N}\times\mathbb{N}}$ over  $\mathbb{N}\times\mathbb{N}$ onto the semigroup $\mathscr{P\!O}\!_{\infty}(\mathbb{N}^2_{\leqslant})$. The congruence $\sigma$ on the semigroup $\mathscr{P\!O}\!_{\infty}(\mathbb{N}^2_{\leqslant})$, which is generated by the natural order $\preccurlyeq$ on the semigroup $\mathscr{P\!O}\!_{\infty}(\mathbb{N}^2_{\leqslant})$ is described: $\alpha\sigma\beta$ if and only if $\alpha$ and $\beta$ are comparable in $\left(\mathscr{P\!O}\!_{\infty}(\mathbb{N}^2_{\leqslant}),\preccurlyeq\right)$. Also, there was proved that the quotient semigroup $\mathscr{P\!O}\!_{\infty}(\mathbb{N}^2_{\leqslant})/\sigma$ is isomorphic to the semidirect product of the free commutative monoid $\mathfrak{AM}_\omega$ over an infinite countable set by the cyclic group of the order two $\mathbb{Z}_2$.

For an arbitrary positive integer $n$ by $\left(\mathbb{N}^n,\leqslant\right)$ we denote the $n$-th power of the set of positive integers $\mathbb{N}$ with the product order:
\begin{equation*}
  \left(x_1,\ldots,x_n\right)\leqslant\left(y_1,\ldots,y_n\right) \qquad \hbox{if and only if} \qquad x_i\leq y_i \quad \hbox{for all} \quad i=1,\ldots,n.
\end{equation*}
It is obvious that the set of all order isomorphisms between principal filters of the poset $\left(\mathbb{N}^n,\leqslant\right)$ with the operation of composition of partial maps form a semigroup. This semigroup will be denoted by $\mathscr{I\!\!P\!F}(\mathbb{N}^n)$. It is easy to see that the semigroup $\mathscr{I\!\!P\!F}(\mathbb{N}^n)$ is isomorphic to the Munn semigroup of $\mathbb{N}^n$ with the dual order to $\leqslant$ (see \cite[Section~5.4]{Howie-1995} or \cite{Munn=1966}). Also, Remark~\ref{remark-1.1} implies that the semigroup $\mathscr{I\!\!P\!F}(\mathbb{N}^n)$ is some generalization of the bicyclic semigroup ${\mathscr{C}}(p,q)$. Hence it is natural to ask: \emph{what algebraic and topological properties of the semigroup $\mathscr{I\!\!P\!F}(\mathbb{N}^n)$ are similar to ones of the bicyclic monoid?}

Later by $\mathbb{I}$ we shall denote the identity map of $\mathbb{N}^n$. It is obvious that $\mathbb{I}$ is a unit element of the semigroup $\mathscr{I\!\!P\!F}(\mathbb{N}^n)$. By $H(\mathbb{I})$ we shall denote the group of units of $\mathscr{I\!\!P\!F}(\mathbb{N}^n)$. It is clear that $\alpha\in \mathscr{I\!\!P\!F}(\mathbb{N}^n)$ is an element of $H(\mathbb{I})$ if and only if it is an order isomorphism of the poset $\left(\mathbb{N}^n,\leqslant\right)$.

In this paper we study algebraic properties of the semigroup $\mathscr{I\!\!P\!F}(\mathbb{N}^n)$. In particular, we show that $\mathscr{I\!\!P\!F}(\mathbb{N}^n)$ is a bisimple, $E$-unitary, $F$-inverse semigroup and describe Green's relations on $\mathscr{I\!\!P\!F}(\mathbb{N}^n)$. We prove that
the semigroup $\mathscr{I\!\!P\!F}(\mathbb{N}^n)$ is isomorphic to the semidirect product of the $n$-th direct power of the bicyclic monoid ${\mathscr{C}}(p,q)^n$ by the group of permutation  $\mathscr{S}_n$. We prove that
every non-identity congruence $\mathfrak{C}$ on the semigroup $\mathscr{I\!\!P\!F}(\mathbb{N}^n)$ is a group congruence  and describe the least group congruence on $\mathscr{I\!\!P\!F}(\mathbb{N}^n)$. Also, we show that every Hausdorff shift-continuous topology on $\mathscr{I\!\!P\!F}(\mathbb{N}^n)$ is discrete and discuss embedding of the semigroup $\mathscr{I\!\!P\!F}(\mathbb{N}^n)$ into compact-like topological semigroups.

\section{Algebraic properties of the semigroup $\mathscr{I\!\!P\!F}(\mathbb{N}^n)$}

By $\mathscr{P}_{\uparrow}(\mathbb{N}^n)=\left\{{\uparrow}\mathbf{x}\colon \mathbf{x}\in \mathbb{N}^n\right\}$ we denote the set of all principal filters of the poset $\left(\mathbb{N}^n,\leqslant\right)$. It is obvious that $(\mathscr{P}_{\uparrow}(\mathbb{N}^n),\cap)$ is a subsemilattice of $(\mathscr{P}(\mathbb{N}^n),\cap)$. Also, we observe that the semilattice $\left(\mathbb{N}^n,\max\right)$, which is the set $\mathbb{N}^n$ with the point-wise operation $\max$:
\begin{equation*}
  \left(x_1,\ldots,x_n\right)\left(y_1,\ldots,y_n\right)=\left(\max\left\{x_1,y_1\right\},\ldots,\max\left\{x_n,y_n\right\}\right),
\end{equation*}
is isomorphic to the semilattice $(\mathscr{P}_{\uparrow}(\mathbb{N}^n),\cap)$ by the mapping $\left(x_1,\ldots,x_n\right)\mapsto{\uparrow}\left(x_1,\ldots,x_n\right)$.

\begin{proposition}\label{proposition-2.1}
Let $n$ be any positive integer. Then the following statements hold:
\begin{itemize}
  \item[$(i)$] $\mathscr{I\!\!P\!F}(\mathbb{N}^n)$ is an inverse semigroup;
  \item[$(ii)$] the semilattice $E(\mathscr{I\!\!P\!F}(\mathbb{N}^n))$ is isomorphic to $(\mathscr{P}_{\uparrow}(\mathbb{N}^n),\cap)$ by the mapping $\varepsilon\mapsto\operatorname{dom}\varepsilon$, and hence it is isomorphic to the semilattice $\left(\mathbb{N}^n,\max\right)$;
  \item[$(iii)$] $\alpha\mathscr{L}\beta$ in $\mathscr{I\!\!P\!F}(\mathbb{N}^n)$ if and only if $\operatorname{dom}\alpha=\operatorname{dom}\beta$;
  \item[$(iv)$] $\alpha\mathscr{R}\beta$ in $\mathscr{I\!\!P\!F}(\mathbb{N}^n)$ if and only if $\operatorname{ran}\alpha=\operatorname{ran}\beta$;
  \item[$(v)$] $\alpha\mathscr{H}\beta$ in $\mathscr{I\!\!P\!F}(\mathbb{N}^n)$ if and only if $\operatorname{dom}\alpha=\operatorname{dom}\beta$ and $\operatorname{ran}\alpha=\operatorname{ran}\beta$;
  \item[$(vi)$] for any idempotents $\varepsilon,\iota\in \mathscr{I\!\!P\!F}(\mathbb{N}^n)$ there exist elements $\alpha,\beta\in
         \mathscr{I\!\!P\!F}(\mathbb{N}^n)$ such that $\alpha\beta=\varepsilon$ and $\beta\alpha=\iota$;
  \item[$(vii)$] $\mathscr{I\!\!P\!F}(\mathbb{N}^n)$ is bisimple and hence it is simple.
\end{itemize}
\end{proposition}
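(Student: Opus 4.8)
The plan is to do everything inside the symmetric inverse semigroup $\mathscr{I}_{\mathbb{N}^n}$, of which $\mathscr{I\!\!P\!F}(\mathbb{N}^n)$ is by definition the subset of partial bijections that are order isomorphisms between principal filters of $(\mathbb{N}^n,\leqslant)$. Two geometric facts drive the argument: (1) the principal filters of $(\mathbb{N}^n,\leqslant)$ are exactly the sets ${\uparrow}a$, $a\in\mathbb{N}^n$, and they are closed under finite intersection, since ${\uparrow}a\cap{\uparrow}b={\uparrow}(a\vee b)$ with $a\vee b$ the coordinatewise maximum; (2) for any $a,b\in\mathbb{N}^n$ the filters ${\uparrow}a$ and ${\uparrow}b$ are order isomorphic — e.g.\ via the coordinatewise translation $(x_1,\dots,x_n)\mapsto(x_1-a_1+b_1,\dots,x_n-a_n+b_n)$ — and, more generally, any order isomorphism between posets carries the principal filter generated by an element $z$ onto the principal filter generated by its image. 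From (1) and (2) it follows that $\mathscr{I\!\!P\!F}(\mathbb{N}^n)$ is closed under composition and under the inversion of $\mathscr{I}_{\mathbb{N}^n}$: for order isomorphisms $\alpha,\beta$ between principal filters, $\operatorname{dom}(\alpha\beta)$ is the $\alpha$-preimage of $\operatorname{ran}\alpha\cap\operatorname{dom}\beta$, which by (1) is a principal filter of the subposet $\operatorname{ran}\alpha$ and hence by (2) pulls back to a principal filter; the range is handled symmetrically; and the converse of an order isomorphism is one.

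For $(i)$ and $(ii)$: closure under inversion yields $\alpha\alpha^{-1}\alpha=\alpha$ with $\alpha^{-1}\in\mathscr{I\!\!P\!F}(\mathbb{N}^n)$, so $\mathscr{I\!\!P\!F}(\mathbb{N}^n)$ is regular. Its idempotents are exactly the partial identities $\mathrm{id}_{{\uparrow}a}$, $a\in\mathbb{N}^n$: an idempotent of $\mathscr{I}_{\mathbb{N}^n}$ is a partial identity, and the partial identity on a set $A$ lies in $\mathscr{I\!\!P\!F}(\mathbb{N}^n)$ precisely when $A$ is a principal filter. These idempotents commute, since $\mathrm{id}_{{\uparrow}a}\,\mathrm{id}_{{\uparrow}b}=\mathrm{id}_{{\uparrow}a\cap{\uparrow}b}=\mathrm{id}_{{\uparrow}(a\vee b)}$. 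A regular semigroup with commuting idempotents is inverse, giving $(i)$; the same displayed identity shows that $\varepsilon\mapsto\operatorname{dom}\varepsilon$ is an isomorphism of $E(\mathscr{I\!\!P\!F}(\mathbb{N}^n))$ onto $(\mathscr{P}_{\uparrow}(\mathbb{N}^n),\cap)$, which together with the isomorphism $(\mathscr{P}_{\uparrow}(\mathbb{N}^n),\cap)\cong(\mathbb{N}^n,\max)$ recorded just before the proposition gives $(ii)$.

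For $(iii)$–$(v)$: having $(i)$, I would invoke the standard description of Green's relations in an inverse semigroup, $\alpha\,\mathscr{L}\,\beta\iff\alpha^{-1}\alpha=\beta^{-1}\beta$ and $\alpha\,\mathscr{R}\,\beta\iff\alpha\alpha^{-1}=\beta\beta^{-1}$. A routine computation in $\mathscr{I}_{\mathbb{N}^n}$ identifies the idempotents $\alpha^{-1}\alpha$ and $\alpha\alpha^{-1}$ with partial identities on principal filters; since $\mathrm{id}_{{\uparrow}a}=\mathrm{id}_{{\uparrow}b}$ forces ${\uparrow}a={\uparrow}b$, the domain/range criteria $(iii)$ and $(iv)$ drop out, and $(v)$ is then just $\mathscr{H}=\mathscr{L}\cap\mathscr{R}$. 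The one point deserving care is that the implication from coincidence of domains (resp.\ of ranges) to $\mathscr{L}$- (resp.\ $\mathscr{R}$-)equivalence has to be witnessed by an element of $\mathscr{I\!\!P\!F}(\mathbb{N}^n)$, and the natural witness — a composite of $\alpha$, $\beta$ and their inverses — lies there by the closure established above.

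For $(vi)$ and $(vii)$: given idempotents $\varepsilon=\mathrm{id}_{{\uparrow}a}$ and $\iota=\mathrm{id}_{{\uparrow}b}$, take an order isomorphism $\alpha\colon{\uparrow}a\to{\uparrow}b$ as in (2) and put $\beta=\alpha^{-1}$; then $\alpha\beta$ and $\beta\alpha$ are the partial identities on ${\uparrow}a$ and on ${\uparrow}b$, so, after relabelling $\alpha$ and $\beta$ if necessary, $\alpha\beta=\varepsilon$ and $\beta\alpha=\iota$, which proves $(vi)$. Moreover $\alpha\alpha^{-1}$ and $\alpha^{-1}\alpha$ then equal $\varepsilon$ and $\iota$, so both $\varepsilon$ and $\iota$ are $\mathscr{D}$-related to $\alpha$ and hence to each other; as every element $\gamma$ of an inverse semigroup is $\mathscr{D}$-related to the idempotent $\gamma\gamma^{-1}$, the whole of $\mathscr{I\!\!P\!F}(\mathbb{N}^n)$ lies in one $\mathscr{D}$-class, i.e.\ it is bisimple; and bisimplicity forces simplicity since $\mathscr{D}\subseteq\mathscr{J}$, so $\mathscr{I\!\!P\!F}(\mathbb{N}^n)$ has a single $\mathscr{J}$-class and no proper two-sided ideals. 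The argument is essentially formal once (1) and (2) are established, so the real work — and the only place I expect any friction — is pinning those two facts down, in particular checking that composites, inverses, and restrictions to subfilters of order isomorphisms between principal filters never leave $\mathscr{I\!\!P\!F}(\mathbb{N}^n)$.
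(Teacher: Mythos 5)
Your argument is correct and follows essentially the same route as the paper: there, too, $\mathscr{I\!\!P\!F}(\mathbb{N}^n)$ is treated as an inverse subsemigroup of the symmetric inverse monoid $\mathscr{I}_{\mathbb{N}^n}$, statements $(i)$--$(v)$ are dismissed as immediate from the definitions and the standard description of Green's relations, and $(vi)$ is proved with exactly your witness, the coordinatewise translation ${\uparrow}a\to{\uparrow}b$ together with $\beta=\alpha^{-1}$. The only cosmetic difference is in $(vii)$, where the paper invokes Lawson's criterion (Proposition~3.2.5$(1)$ of \cite{Lawson-1998}) that an inverse semigroup whose idempotents are pairwise $\mathscr{D}$-related is bisimple, whereas you re-derive that criterion directly from $(vi)$.
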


\begin{proof}
It it obvious that $\mathscr{I\!\!P\!F}(\mathbb{N}^n)$ is an inverse subsemigroup of the symmetric inverse monoid $\mathscr{I}_{\mathbb{N}^n}$ over the set $\mathbb{N}^n$. Then statements $(i)$--$(v)$ follow from the definitions of the semigroup $\mathscr{I\!\!P\!F}(\mathbb{N}^n)$ and Green's relations.

$(vi)$ Fix arbitrary idempotents $\varepsilon,\iota\in\mathscr{I\!\!P\!F}(\mathbb{N}^n)$. Since $\operatorname{dom}\varepsilon$ and $\operatorname{dom}\iota$ are principal filters in $\left(\mathbb{N}^n,\leqslant\right)$ we put $\operatorname{dom}\varepsilon$ and $\operatorname{dom}\iota$ are generated by elements $\left(x_1^0,\ldots,x_n^0\right)$ and $\left(y_1^0,\ldots,y_n^0\right)$ of the poset $\left(\mathbb{N}^n,\leqslant\right)$. We define a partial map $\alpha\colon \mathbb{N}^n\rightharpoonup \mathbb{N}^n$ in the following way:
\begin{equation*}
  \operatorname{dom}\alpha=\operatorname{dom}\varepsilon, \qquad \operatorname{ran}\alpha=\operatorname{dom}\iota \qquad \hbox{and}
\end{equation*}
\begin{equation*}
  \left(z_1,\ldots,z_n\right)\alpha=\left(z_1-x_1^0+y_1^0,\ldots,z_n-x_n^0+y_n^0\right), \quad \hbox{for any} \quad \left(z_1,\ldots,z_n\right)\in\operatorname{dom}\alpha.
\end{equation*}
Then $\alpha\alpha^{-1}=\varepsilon$ and $\alpha^{-1}\alpha=\iota$ and hence we put $\beta=\alpha^{-1}$.

Statement $(vii)$ follows from $(vi)$ and Proposition~3.2.5$(1)$ of \cite{Lawson-1998}.
\end{proof}

The proofs of the following two lemmas are trivial.

\begin{lemma}\label{lemma-2.2}
Let $n$ be a positive integer. Then for any  $i=1,\ldots,n$ the projection on $i$-th coordinate
\begin{equation*}
\pi_i\colon\mathbb{N}^n\to \mathbb{N}^n\colon (x_1,\ldots,\underbrace{{x_i}}_{i\hbox{\footnotesize{-th}}},\ldots,x_n)\longmapsto(1,\ldots,\underbrace{x_i}_{i\hbox{\footnotesize{-th}}},\ldots,1)
\end{equation*}
 is a monotone map and moreover $(1,\ldots,\underbrace{x_i}_{i\hbox{\footnotesize{-th}}},\ldots,1)\leqslant (x_1,\ldots,\underbrace{{x_i}}_{i\hbox{\footnotesize{-th}}},\ldots,x_n)$ in $\left(\mathbb{N}^n,\leqslant\right)$.
\end{lemma}

\begin{lemma}\label{lemma-2.3}
Every permutation $\sigma$ of the set $\{1,\dots,n\}$ induces an order isomorphism $\alpha^*\colon\mathbb N^n\to\mathbb N^n$, $\alpha^*\colon x\mapsto x\circ\alpha$, of the poset $\left(\mathbb{N}^n,\leqslant\right)$.
\end{lemma}

\begin{lemma}\label{lemma-2.4}
Let $n$ be a positive integer and a map $\alpha\colon \mathbb{N}^n\to \mathbb{N}^n$ be an order isomorphism of the poset $\left(\mathbb{N}^n,\leqslant\right)$ such that $\left(x_i^2\right)\alpha=x_i^2$ for any $i=1,\ldots,n$, where $x_i^2=(1,\ldots,\underbrace{{2}}_{i\hbox{\footnotesize{-th}}},\ldots,1)$ is the element of the poset $\left(\mathbb{N}^n,\leqslant\right)$ such that only $i$-th coordinate of $x_i^2$ is equal to $2$ and all other coordinates are equal to $1$. Then $\alpha$ is the identity map of $\left(\mathbb{N}^n,\leqslant\right)$.
\end{lemma}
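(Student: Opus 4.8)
The plan is to use the lattice structure of $(\mathbb{N}^n,\leqslant)$: it is a lattice whose join operation is the coordinatewise maximum — this is exactly the semilattice $(\mathbb{N}^n,\max)$ of Proposition~\ref{proposition-2.1} — and whose meet is the coordinatewise minimum. Since $\alpha$ is an order isomorphism of this lattice, it preserves finite joins and meets, it preserves the covering relation (if $b$ covers $a$ then $b\alpha$ covers $a\alpha$), and it fixes the least element $\overline{1}:=(1,\ldots,1)$. Moreover the atoms of $(\mathbb{N}^n,\leqslant)$, i.e.\ the elements covering $\overline{1}$, are precisely $x_1^2,\ldots,x_n^2$, each of which is fixed by $\alpha$ by hypothesis.

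\emph{Step 1: $\alpha$ fixes every element lying on a coordinate axis.} Fix $i\in\{1,\ldots,n\}$ and, for $m\geqslant 1$, let $u_i^m$ denote the element of $\mathbb{N}^n$ whose $i$-th coordinate is $m$ and whose remaining coordinates are all equal to $1$; thus $u_i^1=\overline{1}$ and $u_i^2=x_i^2$. I would prove $(u_i^m)\alpha=u_i^m$ by induction on $m$, the cases $m=1,2$ being already settled. Assume $(u_i^m)\alpha=u_i^m$. The elements covering $u_i^m$ are obtained from $u_i^m$ by increasing exactly one coordinate by $1$: increasing the $i$-th coordinate gives $u_i^{m+1}$, while increasing the $j$-th coordinate for some $j\neq i$ gives an element which is $\geqslant x_j^2$. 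Hence $u_i^{m+1}$ is the \emph{unique} element $z$ such that $z$ covers $u_i^m$ and $z\not\geqslant x_j^2$ for every $j\neq i$. Since $\alpha$ fixes $u_i^m$ and every $x_j^2$, preserves the covering relation, and (being an order isomorphism) preserves the relation ``$\geqslant x_j^2$'' together with its negation, we conclude $(u_i^{m+1})\alpha=u_i^{m+1}$.

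\emph{Step 2: the general case.} Let $x=(x_1,\ldots,x_n)\in\mathbb{N}^n$. Comparing coordinates one sees at once that $x$ is the join in $(\mathbb{N}^n,\leqslant)$ of the $n$ axis elements $u_1^{x_1},\ldots,u_n^{x_n}$. As $\alpha$ preserves joins and fixes each $u_i^{x_i}$ by Step~1, it follows that $(x)\alpha=x$. Thus $\alpha$ is the identity map of $(\mathbb{N}^n,\leqslant)$.

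There is no serious difficulty here; the only point that needs a moment's attention is the combinatorial claim in Step~1 that, among the covers of an axis element $u_i^m$, exactly one fails to dominate some atom $x_j^2$ with $j\neq i$ — it is this that lets the hypothesis on the atoms propagate up each axis. An alternative to Step~1 is a single induction on the coordinate sum $x_1+\cdots+x_n$, using that an element with two coordinates $\geqslant 2$ is the unique common upper cover of two suitable elements it covers; but the axis-plus-join argument above is shorter and requires no case analysis.
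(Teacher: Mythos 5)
Your proof is correct, but it follows a genuinely different route from the paper's. For the axis elements the paper does not use covers and atoms: it observes that ${\downarrow}x_i^k$ is a $k$-element chain, that an order isomorphism must carry it to a $k$-element chain ${\downarrow}\bigl((x_i^k)\alpha\bigr)$, and that the only elements of $\left(\mathbb{N}^n,\leqslant\right)$ whose principal ideals are $k$-element chains are the axis elements (the hypothesis $\left(x_i^2\right)\alpha=x_i^2$ then pins down which axis), so $\left(x_i^k\right)\alpha=x_i^k$ for all $i$ and $k$; the same is noted for $\alpha^{-1}$. For the general element the paper then argues coordinatewise with the monotone projections $\pi_i$ of Lemma~\ref{lemma-2.2}: if $(x_1,\ldots,x_n)\alpha=(y_1,\ldots,y_n)$, applying $\pi_i$ after $\alpha$ gives $x_i\leqslant y_i$, and applying it after $\alpha^{-1}$ gives $y_i\leqslant x_i$. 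You instead fix the axis elements via the covering relation and the atoms, and then conclude in one stroke from the fact that every element is the join (coordinatewise maximum) of axis elements and that an order isomorphism preserves joins; both the uniqueness claim about covers of $u_i^m$ and the join identity are easily checked, so there is no gap. Your lattice-theoretic finish dispenses with Lemma~\ref{lemma-2.2} and with the explicit second pass through $\alpha^{-1}$ (its monotonicity enters only implicitly in join preservation), which makes the argument shorter and more structural; the paper's version is more elementary and hands-on with coordinates, using only chain lengths and monotone projections, and it reuses Lemma~\ref{lemma-2.2}, which the authors state anyway for later purposes.
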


\begin{proof}
We observe that the element $(1,1,\ldots,1)$ is the minimum in the poset $\left(\mathbb{N}^n,\leqslant\right)$ and since $\alpha$ is an order isomorphism of $\left(\mathbb{N}^n,\leqslant\right)$ we have that $(1,1,\ldots,1)\alpha=(1,1,\ldots,1)$.

Now, by induction we get that for any positive integer $k\geqslant2$ for the element $x_1^k=(k,1,1,\ldots,1)$ of $\left(\mathbb{N}^n,\leqslant\right)$ the set ${\downarrow}x_1^k$ is a $k$-element chain, then so is $({\downarrow}x_1^k)\alpha$. This implies that only first coordinate of $(x_1^k)\alpha$ is equal to $k$ and all other coordinates are equal to $1$, because in other cases we have that ${\downarrow}((x_1^k)\alpha)$ is not a $k$-element chain in $\left(\mathbb{N}^n,\leqslant\right)$. Similarly, for every positive $i=1,\ldots,n$ by induction we obtain that for any positive integer $k\geqslant2$ for the element $x_i^k$ of $\left(\mathbb{N}^n,\leqslant\right)$ the set ${\downarrow}x_i^k$ is a $k$-element chain, then so is $({\downarrow}x_i^k)\alpha$, which implies that only $i$-th coordinate of $(x_i^k)\alpha$ is equal to $k$ and all other coordinates are equal to $1$.

Hence we have shown that $\left(x_i^p\right)\alpha=x_i^p$ for any $i=1,\ldots,n$.

We observe that the converse map $\alpha^{-1}\colon \mathbb{N}^n\to \mathbb{N}^n$ to the order isomorphism $\alpha$ of $\left(\mathbb{N}^n,\leqslant\right)$ is an order isomorphism of $\left(\mathbb{N}^n,\leqslant\right)$ as well, which satisfies the assumption of the lemma. Hence the above part of the proof implies that $\left(x_i^p\right)\alpha^{-1}=x_i^p$ for any $i=1,\ldots,n$.

Fix an arbitrary $(x_1,\ldots,x_n)\in\mathbb{N}^n$ and suppose that
\begin{equation*}
(x_1,\ldots,x_n)\alpha=(y_1,\ldots,y_n).
\end{equation*}
Since a composition of monotone maps is again a monotone map, Lemma~\ref{lemma-2.2} and the above part of the proof imply that for any  $i=1,\ldots,n$ we have that
\begin{equation*}
\begin{split}
  (1,\ldots,\underbrace{x_i}_{i\hbox{\footnotesize{-th}}},\ldots,1) & =(1,\ldots,\underbrace{x_i}_{i\hbox{\footnotesize{-th}}},\ldots,1)\pi_i= ((1,\ldots,\underbrace{x_i}_{i\hbox{\footnotesize{-th}}},\ldots,1)\alpha)\pi_i
\leqslant\\
    & \leqslant ((x_1,\ldots,x_n)\alpha)\pi_i=(y_1,\ldots,y_n)\pi_i=(1,\ldots,\underbrace{y_i}_{i\hbox{\footnotesize{-th}}},\ldots,1)
\end{split}
\end{equation*}
and
\begin{equation*}
\begin{split}
  (1,\ldots,\underbrace{y_i}_{i\hbox{\footnotesize{-th}}},\ldots,1) & =(1,\ldots,\underbrace{y_i}_{i\hbox{\footnotesize{-th}}},\ldots,1)\pi_i= ((1,\ldots,\underbrace{y_i}_{i\hbox{\footnotesize{-th}}},\ldots,1)\alpha^{-1})\pi_i
\leqslant\\
    & \leqslant ((y_1,\ldots,y_n))\alpha^{-1}\pi_i=((x_1,\ldots,x_n)\alpha)\pi_i=(1,\ldots,\underbrace{x_i}_{i\hbox{\footnotesize{-th}}},\ldots,1).
\end{split}
\end{equation*}
This implies that $(x_1,\ldots,x_n)=(y_1,\ldots,y_n)$, which completes the proof of the lemma.
\end{proof}

\begin{theorem}\label{theorem-2.5}
For any positive integer $n$ the group of units $H(\mathbb{I})$ of the semigroup $\mathscr{I\!\!P\!F}(\mathbb{N}^n)$ is isomorphic to $\mathscr{S}_n$. Moreover, every element of $H(\mathbb{I})$ is induced by a permutation of the set $\{1,\ldots,n\}$.
\end{theorem}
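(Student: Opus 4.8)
The plan is to realize $\mathscr{S}_n$ inside $H(\mathbb{I})$ by coordinate permutations and then to show this copy exhausts $H(\mathbb{I})$ by analysing the action of an arbitrary unit on the atoms of $\left(\mathbb{N}^n,\leqslant\right)$. Recall from the introduction that $\alpha\in\mathscr{I\!\!P\!F}(\mathbb{N}^n)$ lies in $H(\mathbb{I})$ if and only if $\alpha$ is an order isomorphism of the whole poset $\left(\mathbb{N}^n,\leqslant\right)$ onto itself. By Lemma~\ref{lemma-2.3}, for every $\sigma\in\mathscr{S}_n$ the map $\widehat{\sigma}\colon\mathbb{N}^n\to\mathbb{N}^n$ permuting coordinates according to $\sigma$ is such an order isomorphism, hence $\widehat{\sigma}\in H(\mathbb{I})$. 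A routine verification (matching the composition convention $x(\alpha\beta)=(x\alpha)\beta$ of the paper) shows that $\sigma\mapsto\widehat{\sigma}$ is a homomorphism $\mathscr{S}_n\to H(\mathbb{I})$; it is injective, since the images of the elements $x_i^2=(1,\ldots,\underbrace{2}_{i},\ldots,1)$ already distinguish distinct permutations. So the remaining task is surjectivity.

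Let $\alpha\in H(\mathbb{I})$ be arbitrary. As observed at the start of the proof of Lemma~\ref{lemma-2.4}, $\alpha$ fixes the least element $(1,\ldots,1)$ of $\left(\mathbb{N}^n,\leqslant\right)$. Consequently $\alpha$, being an order automorphism, maps the set of elements covering $(1,\ldots,1)$ onto itself; this set is precisely $\{x_1^2,\ldots,x_n^2\}$. Hence there is a permutation $\sigma\in\mathscr{S}_n$ with $(x_i^2)\alpha=x_{i\sigma}^2$ for every $i=1,\ldots,n$. Now put $\beta=\alpha\,\widehat{\sigma}^{-1}\in H(\mathbb{I})$, the product taken in $\mathscr{I\!\!P\!F}(\mathbb{N}^n)$. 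Then $\beta$ is an order automorphism of $\left(\mathbb{N}^n,\leqslant\right)$ satisfying $(x_i^2)\beta=x_i^2$ for all $i$, so Lemma~\ref{lemma-2.4} yields $\beta=\mathbb{I}$, that is $\alpha=\widehat{\sigma}$. Thus $\sigma\mapsto\widehat{\sigma}$ is onto $H(\mathbb{I})$, and $H(\mathbb{I})=\{\widehat{\sigma}\colon\sigma\in\mathscr{S}_n\}$, which is also exactly the ``moreover'' assertion of the theorem.

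The only genuinely delicate point is the claim that $\alpha$ permutes $\{x_1^2,\ldots,x_n^2\}$: this rests on recognising these as the atoms of the poset (the elements covering the minimum), a purely order-theoretic description that is therefore preserved by every order automorphism. Everything else is bookkeeping; I expect the main place to be careful is keeping the composition convention straight, so that $\sigma\mapsto\widehat{\sigma}$ comes out a homomorphism (if one's chosen normalisation of $\widehat{\sigma}$ makes it an anti-homomorphism, replace $\sigma$ by $\sigma^{-1}$ or post-compose with inversion), and checking that $\widehat{\sigma}^{-1}=\widehat{\sigma^{-1}}$ acts on the $x_i^2$ so that $(x_i^2)\beta=x_i^2$ indeed holds.
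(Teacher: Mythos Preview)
Your proof is correct and follows essentially the same route as the paper: both arguments show that an arbitrary $\alpha\in H(\mathbb{I})$ permutes the set $\{x_1^2,\ldots,x_n^2\}$ (you phrase this via atoms covering the minimum, the paper via ${\downarrow}x_i^2$ being a two-element chain---equivalent order-theoretic descriptions), then compose with the inverse of the corresponding coordinate permutation and apply Lemma~\ref{lemma-2.4} to conclude $\alpha$ is itself a coordinate permutation. The only cosmetic difference is that you package the result as an explicit isomorphism $\sigma\mapsto\widehat{\sigma}$ and verify injectivity and surjectivity separately, whereas the paper argues more directly.
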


\begin{proof}
We shall show that every order isomorphism of the poset $\left(\mathbb{N}^n,\leqslant\right)$ permutates coordinates of elements in $\mathbb{N}^n$. The converse statement follows from Lemma~\ref{lemma-2.3}.

We consider the element $x_1^2=(2,1,1,\ldots,1)$ of $\left(\mathbb{N}^n,\leqslant\right)$. Since ${\downarrow}x_1^2$ is a chain which consists of two elements and $\alpha$ is an order isomorphism of $\left(\mathbb{N}^n,\leqslant\right)$, the image $({\downarrow}x_1^2)\alpha$ is a two-element chain. By the definition of the poset $\left(\mathbb{N}^n,\leqslant\right)$, the element $(x_1^2)\alpha$ satisfies the following property: \emph{only one coordinate of $(x_1^2)\alpha$ is equal to $2$ and all other coordinates are equal to $1$.} Such coordinate of $(x_1^2)\alpha$ we denote by $\sigma_1$. Similar arguments show that for every element $x_i^2$ of the poset $\left(\mathbb{N}^n,\leqslant\right)$ we obtain that only $\sigma_i$-th coordinate of $(x_i^2)\alpha$ is equal to $2$ and all other coordinates are equal to $1$, for all $i=1,\ldots,n$. Also, since $\alpha$ is an order isomorphism of $\left(\mathbb{N}^n,\leqslant\right)$ we get that $\sigma_i=\sigma_j$ if and only if $i=j$, for $i,j=1,\ldots,n$.

Thus we defined a bijective map $\sigma\colon\mathbb{N}^n\to\mathbb{N}^n$ which acts on $\mathbb{N}^n$ as a permutation of coordinates. Then the compositions $\alpha\sigma^{-1}$ and $\sigma^{-1}\alpha$ are order isomorphisms of $\left(\mathbb{N}^n,\leqslant\right)$. Moreover $\alpha\sigma^{-1}$ and $\sigma^{-1}\alpha$ satisfy the assumption of Lemma~\ref{lemma-2.4}, which implies that the maps $\alpha\sigma^{-1}\colon \mathbb{N}^n\to \mathbb{N}^n$ and $\sigma^{-1}\alpha\colon \mathbb{N}^n\to \mathbb{N}^n$ are the identity maps of the set $\mathbb{N}^n$. This implies that $\alpha=\sigma$ which completes the proof the theorem.
\end{proof}

Theorems~2.3 and 2.20 from~\cite{Clifford-Preston-1961-1967} and Theorem~\ref{theorem-2.5} imply the following corollary.

\begin{corollary}\label{corollary-2.6}
Let $n$ be any positive integer. Then every maximal subgroup of $\mathscr{I\!\!P\!F}(\mathbb{N}^n)$ is isomorphic to $\mathscr{S}_n$ and every $\mathscr{H}$-class of $\mathscr{I\!\!P\!F}(\mathbb{N}^n)$ consists of $n!$ elements.
\end{corollary}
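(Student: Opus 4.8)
The plan is to derive everything from bisimplicity of $\mathscr{I\!\!P\!F}(\mathbb{N}^n)$ (Proposition~\ref{proposition-2.1}$(vii)$) together with the standard structure theory of $\mathscr{D}$-classes. First I would recall that, by Theorem~2.3 of \cite{Clifford-Preston-1961-1967}, an $\mathscr{H}$-class of a semigroup is a group precisely when it contains an idempotent, and in that case it is a maximal subgroup; conversely every maximal subgroup arises this way. Hence the maximal subgroups of $\mathscr{I\!\!P\!F}(\mathbb{N}^n)$ are exactly the $\mathscr{H}$-classes $H_\varepsilon$ with $\varepsilon\in E(\mathscr{I\!\!P\!F}(\mathbb{N}^n))$, and one of them is the group of units $H(\mathbb{I})$, that is, the $\mathscr{H}$-class of the identity $\mathbb{I}$.

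Next I would use that $\mathscr{I\!\!P\!F}(\mathbb{N}^n)$ is bisimple, so it has a single $\mathscr{D}$-class, namely the whole semigroup. By Theorem~2.20 of \cite{Clifford-Preston-1961-1967}, any two maximal subgroups lying in the same $\mathscr{D}$-class are isomorphic; applying this to the unique $\mathscr{D}$-class, every maximal subgroup $H_\varepsilon$ is isomorphic to $H(\mathbb{I})$. By Theorem~\ref{theorem-2.5}, $H(\mathbb{I})\cong\mathscr{S}_n$, so every maximal subgroup of $\mathscr{I\!\!P\!F}(\mathbb{N}^n)$ is isomorphic to $\mathscr{S}_n$, a group of order $n!$.

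Finally, for an arbitrary $\mathscr{H}$-class $H_\alpha$ (not necessarily containing an idempotent), I would invoke the part of the $\mathscr{D}$-class theory in Theorem~2.3 of \cite{Clifford-Preston-1961-1967} which says that all $\mathscr{H}$-classes inside one $\mathscr{D}$-class have the same cardinality. Since the $\mathscr{H}$-class $H(\mathbb{I})$ has exactly $n!$ elements and all $\mathscr{H}$-classes lie in the single $\mathscr{D}$-class, every $\mathscr{H}$-class of $\mathscr{I\!\!P\!F}(\mathbb{N}^n)$ has $n!$ elements. I do not expect a genuine obstacle here: the statement is a formal consequence of bisimplicity, the cited structure theorems, and Theorem~\ref{theorem-2.5}; the only point requiring a moment's care is matching the numbering of the two theorems from \cite{Clifford-Preston-1961-1967} to the facts used — that group $\mathscr{H}$-classes are maximal subgroups, and that within one $\mathscr{D}$-class the $\mathscr{H}$-classes are equinumerous and their group members are mutually isomorphic.
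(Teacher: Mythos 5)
Your argument is correct and is exactly the paper's intended proof: the paper derives the corollary from Theorems~2.3 and~2.20 of Clifford--Preston together with Theorem~\ref{theorem-2.5}, using bisimplicity just as you do. Nothing is missing.
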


The following proposition gives sufficient conditions when an inverse subsemigroup of the semigroup $\mathscr{I\!\!P\!F}(\mathbb{N}^n)$, which is generated by an element of $\mathscr{I\!\!P\!F}(\mathbb{N}^n)$, is isomorphic to the bicyclic monoid ${\mathscr{C}}(p,q)$.

\begin{proposition}\label{proposition-2.7}
Let $n$ be any positive integer and $\alpha$ be an element of the semigroup $\mathscr{I\!\!P\!F}(\mathbb{N}^n)$ such that $\operatorname{ran}\alpha\subsetneqq \operatorname{dom}\alpha$. Then the inverse subsemigroup $\left\langle\alpha,\alpha^{-1}\right\rangle$ of $\mathscr{I\!\!P\!F}(\mathbb{N}^n)$, which is generated by $\alpha$, is isomorphic to the bicyclic monoid ${\mathscr{C}}(p,q)$.
\end{proposition}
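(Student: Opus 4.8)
The plan is to realize $\langle\alpha,\alpha^{-1}\rangle$ as a monoid generated by two elements $p,q$ with $pq=1$ and $qp\neq 1$, and then to invoke the classification of homomorphic images of the bicyclic monoid recalled in the introduction (Corollary~1.32 of \cite{Clifford-Preston-1961-1967}). First I would record the two idempotents attached to $\alpha$: since $\alpha$ is a partial bijection of $\mathbb{N}^n$, the elements $\varepsilon:=\alpha\alpha^{-1}$ and $\iota:=\alpha^{-1}\alpha$ are idempotents of $\mathscr{I\!\!P\!F}(\mathbb{N}^n)$ with $\operatorname{dom}\varepsilon=\operatorname{dom}\alpha$ and $\operatorname{dom}\iota=\operatorname{dom}\alpha^{-1}=\operatorname{ran}\alpha$. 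The hypothesis $\operatorname{ran}\alpha\subsetneqq\operatorname{dom}\alpha$ forces $\varepsilon\neq\iota$ (while $\varepsilon\iota=\iota\varepsilon=\iota$).

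The key step is to verify that $\varepsilon$ is a two-sided identity of the subsemigroup $\langle\alpha,\alpha^{-1}\rangle$, and this is exactly where the inclusion $\operatorname{ran}\alpha\subseteq\operatorname{dom}\alpha$ is used. The relations $\varepsilon\alpha=\alpha\alpha^{-1}\alpha=\alpha$ and $\alpha^{-1}\varepsilon=\alpha^{-1}\alpha\alpha^{-1}=\alpha^{-1}$ hold in every inverse semigroup. For the remaining two, $\alpha\varepsilon=\alpha(\alpha\alpha^{-1})$ and $\varepsilon\alpha^{-1}=(\alpha\alpha^{-1})\alpha^{-1}$, I would argue with domains directly: for any $x\in\operatorname{dom}\alpha$ one has $x\alpha\in\operatorname{ran}\alpha\subseteq\operatorname{dom}\alpha=\operatorname{dom}\varepsilon$, whence $\operatorname{dom}(\alpha\varepsilon)=\operatorname{dom}\alpha$ and $\alpha\varepsilon$ coincides with $\alpha$; similarly $\operatorname{dom}(\varepsilon\alpha^{-1})=\operatorname{ran}\alpha=\operatorname{dom}\alpha^{-1}$, so $\varepsilon\alpha^{-1}=\alpha^{-1}$. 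Since $\varepsilon=\alpha\alpha^{-1}$ lies in $\langle\alpha,\alpha^{-1}\rangle$, this shows that $\langle\alpha,\alpha^{-1}\rangle$ is a monoid with identity $\varepsilon$, generated by $p:=\alpha$ and $q:=\alpha^{-1}$, which satisfy $pq=\varepsilon$.

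By the presentation $\mathscr{C}(p,q)=\langle p,q\mid pq=1\rangle$ the assignment $p\mapsto\alpha$, $q\mapsto\alpha^{-1}$ then extends to a surjective homomorphism $h\colon\mathscr{C}(p,q)\to\langle\alpha,\alpha^{-1}\rangle$. By \cite[Corollary~1.32]{Clifford-Preston-1961-1967}, $h$ is either an isomorphism or $\langle\alpha,\alpha^{-1}\rangle$ is a cyclic group; but in the second case $\langle\alpha,\alpha^{-1}\rangle$ would be a group with identity $\varepsilon$, so the right inverse $\alpha^{-1}$ of $\alpha$ would also be a left inverse, giving $\iota=\alpha^{-1}\alpha=\varepsilon$ and contradicting $\varepsilon\neq\iota$. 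Hence $h$ is an isomorphism and $\langle\alpha,\alpha^{-1}\rangle\cong\mathscr{C}(p,q)$. The only point that is not an immediate appeal to known facts about the bicyclic monoid is the verification that $\varepsilon$ acts as an identity on both sides, i.e. that multiplying by $\varepsilon$ does not shrink the domain of $\alpha$ or of $\alpha^{-1}$; I expect that — harmless as the computation is — to be the single place where the hypothesis $\operatorname{ran}\alpha\subsetneqq\operatorname{dom}\alpha$ is genuinely used.
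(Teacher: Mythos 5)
Your argument is correct and is essentially the paper's own proof: the paper also takes $\varepsilon$ to be the identity map of $\operatorname{dom}\alpha$ (which is your $\alpha\alpha^{-1}$), records the relations $\varepsilon\alpha=\alpha\varepsilon=\alpha$, $\varepsilon\alpha^{-1}=\alpha^{-1}\varepsilon=\alpha^{-1}$, $\alpha\alpha^{-1}=\varepsilon$ and $\alpha^{-1}\alpha\neq\varepsilon$ — your explicit domain computation, using $\operatorname{ran}\alpha\subsetneqq\operatorname{dom}\alpha$, just spells out what the paper dismisses as following from the semigroup operation — and then appeals to Clifford--Preston. The only cosmetic difference is the final citation: the paper invokes Lemma~1.31 of Clifford--Preston directly, whereas you rederive its conclusion from the presentation of ${\mathscr{C}}(p,q)$ together with Corollary~1.32 and the observation that the image cannot be a group since it contains the two distinct idempotents $\alpha\alpha^{-1}$ and $\alpha^{-1}\alpha$.
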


\begin{proof}
Put $\varepsilon$ be the identity map of $\operatorname{dom}\alpha$. The semigroup operation of $\mathscr{I\!\!P\!F}(\mathbb{N}^n)$ implies that the following equalities hold:
\begin{equation*}
  \varepsilon\alpha=\alpha\varepsilon=\alpha, \qquad \varepsilon\alpha^{-1}=\alpha^{-1}\varepsilon=\alpha^{-1}, \qquad \alpha\alpha^{-1}=\varepsilon \qquad \hbox{and} \qquad \alpha^{-1}\alpha\neq\varepsilon
\end{equation*}
Then we apply Lemma~1.31 from \cite{Clifford-Preston-1961-1967}.
\end{proof}

\begin{corollary}\label{corollary-2.8}
Let $n$ be any positive integer. Then for any idempotents $\varepsilon$ and $\iota$ of the semigroup $\mathscr{I\!\!P\!F}(\mathbb{N}^n)$ such that $\varepsilon\preccurlyeq\iota$ there exists a subsemigroup ${\mathscr{C}}$ of $\mathscr{I\!\!P\!F}(\mathbb{N}^n)$ which is isomorphic to the bicyclic monoid ${\mathscr{C}}(p,q)$ and contains  $\varepsilon$ and $\iota$.
\end{corollary}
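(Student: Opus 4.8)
\emph{Proof proposal.} The plan is to realize $\varepsilon$ and $\iota$ as the two basic idempotents $\alpha^{-1}\alpha$ and $\alpha\alpha^{-1}$ of a suitable bicyclic subsemigroup and then to invoke Proposition~\ref{proposition-2.7}. By Proposition~\ref{proposition-2.1}$(ii)$ the idempotents $\varepsilon$ and $\iota$ are the identity maps of principal filters ${\uparrow}\bar{a}$ and ${\uparrow}\bar{b}$ of $\left(\mathbb{N}^n,\leqslant\right)$, where $\bar{a}=(a_1,\ldots,a_n)$ and $\bar{b}=(b_1,\ldots,b_n)$, and the relation $\varepsilon\preccurlyeq\iota$ translates via that isomorphism into ${\uparrow}\bar{a}\subseteq{\uparrow}\bar{b}$, i.e. $b_i\leqslant a_i$ for all $i=1,\ldots,n$.

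First I would handle the case $\varepsilon\neq\iota$. Following the construction in the proof of Proposition~\ref{proposition-2.1}$(vi)$, define $\alpha\colon{\uparrow}\bar{b}\to{\uparrow}\bar{a}$ by the coordinatewise translation $(z_1,\ldots,z_n)\mapsto(z_1-b_1+a_1,\ldots,z_n-b_n+a_n)$. It is routine that $\alpha$ is well defined (if $z_i\geqslant b_i$ then $z_i-b_i+a_i\geqslant a_i\geqslant 1$), that it is a bijection of ${\uparrow}\bar{b}$ onto ${\uparrow}\bar{a}$, and that $\alpha$ together with its inverse (the reverse translation) is monotone; hence $\alpha\in\mathscr{I\!\!P\!F}(\mathbb{N}^n)$ with $\operatorname{dom}\alpha={\uparrow}\bar{b}=\operatorname{dom}\iota$ and $\operatorname{ran}\alpha={\uparrow}\bar{a}=\operatorname{dom}\varepsilon$. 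Since $\varepsilon\neq\iota$ we have $\bar{a}\neq\bar{b}$, and then $\bar{b}\in{\uparrow}\bar{b}\setminus{\uparrow}\bar{a}$ (because $\bar{a}\leqslant\bar{b}$ together with $\bar{b}\leqslant\bar{a}$ would force $\bar a=\bar b$), so $\operatorname{ran}\alpha\subsetneqq\operatorname{dom}\alpha$. Proposition~\ref{proposition-2.7} now gives that $\mathscr{C}:=\left\langle\alpha,\alpha^{-1}\right\rangle$ is isomorphic to ${\mathscr{C}}(p,q)$; moreover the semigroup operation of $\mathscr{I\!\!P\!F}(\mathbb{N}^n)$ shows that $\alpha\alpha^{-1}$ is the identity map of $\operatorname{dom}\alpha$, i.e. $\iota$, and $\alpha^{-1}\alpha$ is the identity map of $\operatorname{ran}\alpha$, i.e. $\varepsilon$. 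Thus $\varepsilon,\iota\in\mathscr{C}$, as required.

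For the degenerate case $\varepsilon=\iota$ the translation above collapses to the identity, so instead I would take $\alpha$ to be the shift of ${\uparrow}\bar{b}$ by $(1,0,\ldots,0)$, so that $\operatorname{dom}\alpha={\uparrow}\bar{b}=\operatorname{dom}\iota$ and $\operatorname{ran}\alpha={\uparrow}(b_1+1,b_2,\ldots,b_n)\subsetneqq\operatorname{dom}\alpha$; then $\left\langle\alpha,\alpha^{-1}\right\rangle$ is bicyclic by Proposition~\ref{proposition-2.7} and its identity $\alpha\alpha^{-1}$ equals $\iota=\varepsilon$, so both idempotents lie in it.

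I do not expect a genuine obstacle here: the whole argument reduces to choosing the right $\alpha$ and applying Proposition~\ref{proposition-2.7}. The only two points needing attention are (a) verifying that the translation is indeed an order isomorphism of the two principal filters in both directions (immediate, since translations preserve the product order), and (b) remembering to isolate the case $\varepsilon=\iota$, where one must shrink the filter by one step in a single coordinate rather than translate.
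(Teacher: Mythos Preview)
Your proof is correct and follows essentially the same route as the paper: pick an order isomorphism $\alpha$ from $\operatorname{dom}\iota$ onto $\operatorname{dom}\varepsilon$ and apply Proposition~\ref{proposition-2.7}, treating $\varepsilon=\iota$ separately. The only cosmetic difference is that you write down the translation explicitly, whereas the paper just says ``any order isomorphism'', and in the degenerate case the paper reduces to the first case by picking some $\nu\prec\varepsilon$ rather than specifying the shift by $(1,0,\ldots,0)$ as you do.
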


\begin{proof}
Suppose that $\varepsilon\neq\iota$. Let $\alpha$ be any order isomorphism from $\operatorname{dom}\iota$ onto $\operatorname{dom}\varepsilon$. Next we apply Proposition~\ref{proposition-2.7}.

If $\varepsilon=\iota$ then we choose any idempotent $\nu\neq\varepsilon$ such that $\nu\preccurlyeq\varepsilon$ and apply the above part of the proof.
\end{proof}

\begin{lemma}\label{lemma-2.9}
Let $n$ be any positive integer and $\mathfrak{C}$ be a congruence on the semigroup $\mathscr{I\!\!P\!F}(\mathbb{N}^n)$ such that $\varepsilon\mathfrak{C}\iota$ for some two distinct idempotents $\varepsilon,\iota\in\mathscr{I\!\!P\!F}(\mathbb{N}^n)$. Then $\varsigma\mathfrak{C}\upsilon$ for all idempotents $\varsigma,\upsilon$ of $\mathscr{I\!\!P\!F}(\mathbb{N}^n)$.
\end{lemma}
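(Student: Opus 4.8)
The plan is to work entirely inside the band. By Proposition~\ref{proposition-2.1}$(ii)$ I identify $E(\mathscr{I\!\!P\!F}(\mathbb{N}^n))$ with $(\mathbb{N}^n,\max)$: for $a\in\mathbb{N}^n$ let $\varepsilon_a$ be the idempotent with $\operatorname{dom}\varepsilon_a={\uparrow}a$, so that $\varepsilon_a\varepsilon_b=\varepsilon_{\max(a,b)}$ (coordinatewise $\max$), and the conclusion of the lemma becomes: $\varepsilon_a\,\mathfrak{C}\,\varepsilon_b$ for all $a,b\in\mathbb{N}^n$. I write $\mathbf{1}_j\in\mathbb{Z}^n$ for the vector with $1$ in coordinate $j$ and $0$ elsewhere.

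First I would reduce the hypothesis to a ``unit step''. From $\varepsilon\,\mathfrak{C}\,\iota$ with $\varepsilon\neq\iota$, right multiplication by $\iota$ and by $\varepsilon$ (idempotents commute) gives $\varepsilon\,\mathfrak{C}\,\varepsilon\iota\,\mathfrak{C}\,\iota$, and since $\varepsilon\neq\iota$ at least one of the pairs $\{\varepsilon,\varepsilon\iota\}$, $\{\varepsilon\iota,\iota\}$ consists of two distinct idempotents, one of whose domains properly contains the other. So, after renaming, I may assume $\varepsilon_e\,\mathfrak{C}\,\varepsilon_f$ with $f\leqslant e$, $f\neq e$ in $\mathbb{N}^n$. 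Picking a coordinate $j$ with $f_j<e_j$ and setting $w=e-\mathbf{1}_j\in\mathbb{N}^n$ (legitimate since $e_j-1\geqslant f_j\geqslant 1$), one has $\varepsilon_w\varepsilon_e=\varepsilon_e$ and $\varepsilon_w\varepsilon_f=\varepsilon_{e-\mathbf{1}_j}$, whence $\varepsilon_e\,\mathfrak{C}\,\varepsilon_{e-\mathbf{1}_j}$; writing $a=e-\mathbf{1}_j$ this is
\begin{equation*}
\varepsilon_a\,\mathfrak{C}\,\varepsilon_{a+\mathbf{1}_j}\qquad\text{for some }a\in\mathbb{N}^n,\ j\in\{1,\dots,n\}.
\end{equation*}

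Next I would transport this single relation to every idempotent by conjugation. Given any $t\in\mathbb{N}^n$ and any $l\in\{1,\dots,n\}$, choose $\pi\in\mathscr{S}_n$ with $\pi(j)=l$ and let $g\colon{\uparrow}a\to{\uparrow}t$ be the order isomorphism $x\mapsto t+\pi(x-a)$ (an order isomorphism by Lemma~\ref{lemma-2.3} together with the obvious translation isomorphisms). As $\mathscr{I\!\!P\!F}(\mathbb{N}^n)$ is inverse and $\mathfrak{C}$ is a congruence, $\varepsilon_a\,\mathfrak{C}\,\varepsilon_{a+\mathbf{1}_j}$ yields $g^{-1}\varepsilon_a g\,\mathfrak{C}\,g^{-1}\varepsilon_{a+\mathbf{1}_j}g$, and a short computation with domains (using $\operatorname{dom}g={\uparrow}a$) gives $g^{-1}\varepsilon_a g=\varepsilon_t$ and $g^{-1}\varepsilon_{a+\mathbf{1}_j}g=\varepsilon_{t+\mathbf{1}_{\pi(j)}}=\varepsilon_{t+\mathbf{1}_l}$. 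Since $t,l$ were arbitrary,
\begin{equation*}
\varepsilon_t\,\mathfrak{C}\,\varepsilon_{t+\mathbf{1}_l}\qquad\text{for all }t\in\mathbb{N}^n,\ l\in\{1,\dots,n\}.
\end{equation*}
Finally, for arbitrary idempotents $\varsigma=\varepsilon_x$ and $\upsilon=\varepsilon_y$, iterating the last relation along unit steps from $x$ up to $\max(x,y)$, and likewise from $y$, gives $\varsigma\,\mathfrak{C}\,\varepsilon_{\max(x,y)}\,\mathfrak{C}\,\upsilon$, which is the claim.

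I expect the first step to be the only subtle point: multiplication by idempotents can only enlarge domains (i.e.\ increase coordinates), so the multiplier $w$ must be chosen precisely so as to shrink the gap between $e$ and $f$ to a single unit step in one coordinate. Once that is in place, the flexibility of conjugating by \emph{non-invertible} order isomorphisms of principal filters --- which is exactly what allows both the permutation $\pi$ and the target point $t$ to be chosen freely --- does all the work. A more naive attempt using only bicyclic subsemigroups and the fact that their non-trivial congruences are group congruences would stall, because it equates only the idempotents lying within one such subsemigroup.
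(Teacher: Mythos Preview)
Your proof is correct and takes a genuinely different route from the paper's. The paper also begins by reducing to comparable idempotents, but then conjugates only by translations to obtain $\mathbb{I}\,\mathfrak{C}\,\varepsilon^\ast$ for some non-unit idempotent $\varepsilon^\ast$; from there it picks an intermediate idempotent $\varepsilon_i$ with domain ${\uparrow}x_i^2$, uses conjugation by transpositions $\sigma_{(i,j)}\in H(\mathbb{I})$ to obtain $\varepsilon_j\,\mathfrak{C}\,\mathbb{I}$ for every coordinate $j$, multiplies these together to get $\mathbb{I}\,\mathfrak{C}\,\varepsilon_{(2,\dots,2)}$, and finally invokes the bicyclic subsemigroup generated by the diagonal shift $\gamma$ together with Corollary~1.32 of Clifford--Preston to conclude that all ``diagonal'' idempotents $(\gamma^{-1})^k\gamma^k$ are $\mathfrak{C}$-equivalent to $\mathbb{I}$; a sandwich argument then reaches every idempotent. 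Your argument is more uniform and more elementary: the single conjugation $g^{-1}(\cdot)g$ by a non-invertible element of $\mathscr{I\!\!P\!F}(\mathbb{N}^n)$ simultaneously handles the translation to an arbitrary base point $t$ and the permutation of the distinguished coordinate, so you avoid both the bicyclic-semigroup machinery and the separate treatment of translations and permutations. (Incidentally, your closing remark that a bicyclic-subsemigroup approach ``would stall'' is a bit pessimistic: the paper shows it can be made to work, but at the cost of the extra steps just described.)
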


\begin{proof}
We observe that without loss of generality, we may assume that $\varepsilon\preccurlyeq\iota$ where $\preccurlyeq$ is the natural partial order on the semilattice $E(\mathscr{I\!\!P\!F}(\mathbb{N}^n))$. Indeed, if $\varepsilon,\iota\in E(\mathscr{I\!\!P\!F}(\mathbb{N}^n))$ then $\varepsilon\mathfrak{C}\iota$ implies that $\varepsilon=\varepsilon\varepsilon\mathfrak{C}\iota\varepsilon$, and since the idempotents $\varepsilon$ and $\iota$ are distinct in $\mathscr{I\!\!P\!F}(\mathbb{N}^n)$ we have that $\iota\varepsilon\preccurlyeq\varepsilon$.

Now, the inequality $\varepsilon\preccurlyeq\iota$ implies that $\operatorname{dom}\varepsilon\subseteq \operatorname{dom}\iota$ and hence  $\left(x_1,\ldots,x_n\right)\leqslant\left(y_1,\ldots,y_n\right)$, where ${\uparrow}\left(x_1,\ldots,x_n\right)$ and ${\uparrow}\left(y_1,\ldots,y_n\right)$ are principal filters in $\left(\mathbb{N}^n,\leqslant\right)$ such that
\begin{equation*}
{\uparrow}\left(x_1,\ldots,x_n\right)=\operatorname{dom}\iota \qquad \hbox{and} \qquad {\uparrow}\left(y_1,\ldots,y_n\right)=\operatorname{dom}\varepsilon.
\end{equation*}

Next, we define partial maps $\alpha,\beta\colon \mathbb{N}^n\rightharpoonup\mathbb{N}^n$ in the following way:
\begin{itemize}
  \item[$(a)$] $\operatorname{dom}\alpha=\mathbb{N}^n$, $\operatorname{ran}\alpha=\operatorname{dom}\iota$ and $\left(z_1,\ldots,z_n\right)\alpha=\left(z_1+x_1-1,\ldots,z_n+x_n-1\right)$ for $\left(z_1,\ldots,z_n\right)\in\operatorname{dom}\alpha$;
  \item[$(b)$] $\operatorname{dom}\beta=\operatorname{dom}\iota$, $\operatorname{ran}\beta=\mathbb{N}^n$ and $\left(z_1,\ldots,z_n\right)\beta=\left(z_1-x_1+1,\ldots,z_n-x_n+1\right)$ for $\left(z_1,\ldots,z_n\right)\in\operatorname{dom}\beta$.
\end{itemize}
Simple verifications show that $\alpha\iota\beta=\mathbb{I}$ and $\beta\alpha=\iota$, and moreover since $\alpha\beta=\mathbb{I}$ we have that
\begin{equation*}
\left(\alpha\varepsilon\beta\right)\left(\alpha\varepsilon\beta\right)=\alpha\varepsilon\left(\beta\alpha\right)\varepsilon\beta= \alpha\varepsilon\iota\varepsilon\beta=\alpha\varepsilon\varepsilon\beta=\alpha\varepsilon\beta,
\end{equation*}
which implies that $\alpha\varepsilon\beta$ is an idempotent of $\mathscr{I\!\!P\!F}(\mathbb{N}^n)$ such that  $\alpha\varepsilon\beta\neq \mathbb{I}$.

Thus, it was shown that there exists a non-unit idempotent $\varepsilon^*$ in $\mathscr{I\!\!P\!F}(\mathbb{N}^n)$ such that $\varepsilon^*\mathfrak{C}\mathbb{I}$. This implies that $\varepsilon_0\mathfrak{C}\mathbb{I}$ for any idempotent $\varepsilon_0$ of $\mathscr{I\!\!P\!F}(\mathbb{N}^n)$ such that $\varepsilon^*\preccurlyeq\varepsilon_0\preccurlyeq\mathbb{I}$. Then the definition of the semigroup $\mathscr{I\!\!P\!F}(\mathbb{N}^n)$ and Proposition~\ref{proposition-2.1}$(ii)$ imply that there exists an element $x_i^2=(1,\ldots,\underbrace{{2}}_{i\hbox{\footnotesize{-th}}},\ldots,1)$ of the poset $\left(\mathbb{N}^n,\leqslant\right)$  and there exists an idempotent $\varepsilon_i$ in $\mathscr{I\!\!P\!F}(\mathbb{N}^n)$ such that $\operatorname{dom}\varepsilon_0\subseteq\operatorname{dom}\varepsilon_i={\uparrow}x_i^2$.

Fix an arbitrary positive integer $j\in\left\{1,\ldots,n\right\}\setminus\{i\}$. Put $\sigma_{(i,j)}$ is the permutation of coordinates of elements of the set $\mathbb{N}^n$ which permutates only $j$-th and $i$-th coordinates, i.e., this permutation is the cycle  $(i,j)$ on the coordinates. Then the semigroup operation of $\mathscr{I\!\!P\!F}(\mathbb{N}^n)$ implies that $\sigma_{(i,j)}\mathbb{I}\sigma_{(i,j)}=\mathbb{I}$ and $\sigma_{(i,j)}\varepsilon_i\sigma_{(i,j)}=\varepsilon_j$, where $\varepsilon_j$ is the identity map of the principal filter ${\uparrow}x_j^2$ of the poset $\left(\mathbb{N}^n,\leqslant\right)$.

The above arguments imply that $\varepsilon_k\mathfrak{C}\mathbb{I}$ for every idempotent $\varepsilon_k\in\mathscr{I\!\!P\!F}(\mathbb{N}^n)$ such that $\varepsilon_k$ is the identity map of the principal filter ${\uparrow}x_k^2$ of the poset $\left(\mathbb{N}^n,\leqslant\right)$, $k=1,\ldots,n$.  The semigroup operation of $\mathscr{I\!\!P\!F}(\mathbb{N}^n)$ implies that the idempotent $\varepsilon_1\ldots\varepsilon_n$ is the identity map of the principal filter ${\uparrow}(2,\ldots,2)$ of $\left(\mathbb{N}^n,\leqslant\right)$. Since $\varepsilon^*\mathfrak{C}\mathbb{I}$ for a some non-unit idempotent $\varepsilon^*$ in $\mathscr{I\!\!P\!F}(\mathbb{N}^n)$, there exists an idempotent $\varepsilon_k\in\mathscr{I\!\!P\!F}(\mathbb{N}^n)$ such that $\varepsilon_k\mathfrak{C}\mathbb{I}$. Then by above part of the proof we get that $\mathbb{I}\mathfrak{C}\left(\varepsilon_1\ldots\varepsilon_n\right)$. We define a partial map $\gamma\colon \mathbb{N}^n\rightharpoonup \mathbb{N}^n$ in the following way:
\begin{equation*}
  \operatorname{dom}\gamma=\mathbb{N}^n, \quad \operatorname{ran}\gamma={\uparrow}(2,\ldots,2) \quad  \hbox{~and~} \quad \left(z_1,\ldots,z_n\right)\gamma=\left(z_1+1,\ldots,z_n+1\right),
\end{equation*}
for $\left(z_1,\ldots,z_n\right)\in\operatorname{dom}\gamma$.
By Proposition~\ref{proposition-2.7}, the subsemigroup $\left\langle\gamma,\gamma^{-1}\right\rangle$ of $\mathscr{I\!\!P\!F}(\mathbb{N}^n)$, which is generated by $\gamma$  and its inverse $\gamma^{-1}$, is isomorphic to the bicyclic monoid ${\mathscr{C}}(p,q)$. It is obvious that $\gamma\gamma^{-1}=\mathbb{I}$ and $\gamma^{-1}\gamma=\varepsilon_1\ldots\varepsilon_n$. Since $\mathbb{I}\mathfrak{C}\left(\varepsilon_1\ldots\varepsilon_n\right)$, by Corollary~1.32 from \cite{Clifford-Preston-1961-1967} we obtain that all idempotents of the subsemigroup $\left\langle\gamma,\gamma^{-1}\right\rangle$ in $\mathscr{I\!\!P\!F}(\mathbb{N}^n)$ are $\mathfrak{C}$-equivalent. Also, the definition of the bicyclic semigroup ${\mathscr{C}}(p,q)$ and Lemma~1.31 from \cite{Clifford-Preston-1961-1967} imply that all idempotents of the subsemigroup $\left\langle\gamma,\gamma^{-1}\right\rangle$ of $\mathscr{I\!\!P\!F}(\mathbb{N}^n)$ are elements of the form $\left(\gamma^{-1}\right)^k\gamma^k$, where $k$ is a some non-negative integer. Now, by the definition of the semigroup $\mathscr{I\!\!P\!F}(\mathbb{N}^n)$ we have that $\left(\gamma^{-1}\right)^k\gamma^k$ is the identity map of the principal filter ${\uparrow}(k,\ldots,k)$ of $\left(\mathbb{N}^n,\leqslant\right)$ for some  non-negative integer $k$. Moreover, for every idempotent $\zeta$ of $\mathscr{I\!\!P\!F}(\mathbb{N}^n)$ which is the identity map of the principal filter ${\uparrow}(a_1,\ldots,a_n)$ of $\left(\mathbb{N}^n,\leqslant\right)$, we have that $\left(\gamma^{-1}\right)^m\gamma^m\preccurlyeq\zeta$, where
$
  m=\max\left\{a_1,\ldots,a_n\right\},
$
which implies that $\mathbb{I}\mathfrak{C}\zeta$.
\end{proof}

\begin{lemma}\label{lemma-2.10}
Let $n$ be any positive integer  and $\mathfrak{C}$ be a congruence on the semigroup $\mathscr{I\!\!P\!F}(\mathbb{N}^n)$ such that $\alpha\mathfrak{C}\beta$ for some non-$\mathscr{H}$-equivalent elements $\alpha,\beta\in\mathscr{I\!\!P\!F}(\mathbb{N}^n)$. Then $\varepsilon\mathfrak{C}\iota$ for all idempotents $\varepsilon,\iota$ of $\mathscr{I\!\!P\!F}(\mathbb{N}^n)$.
\end{lemma}

\begin{proof}
Since $\alpha$ and $\beta$ are not $\mathscr{H}$-equivalent in $\mathscr{I\!\!P\!F}(\mathbb{N}^n)$ we have that either $\alpha\alpha^{-1}\neq\beta\beta^{-1}$ or $\alpha^{-1}\alpha\neq\beta^{-1}\beta$ (see \cite[p.~82]{Lawson-1998}). Then Proposition~4 from \cite[Section~2.3]{Lawson-1998} implies that  $\alpha\alpha^{-1}\mathfrak{C}\beta\beta^{-1}$ and $\alpha^{-1}\alpha\mathfrak{C}\beta^{-1}\beta$ and hence the assumption of Lemma~\ref{lemma-2.9} holds.
\end{proof}

\begin{lemma}\label{lemma-2.11}
Let $n$ be any positive integer  and $\mathfrak{C}$ be a congruence on the semigroup $\mathscr{I\!\!P\!F}(\mathbb{N}^n)$ such that $\alpha\mathfrak{C}\beta$ for some two distinct $\mathscr{H}$-equivalent elements $\alpha,\beta\in\mathscr{I\!\!P\!F}(\mathbb{N}^n)$. Then $\varepsilon\mathfrak{C}\iota$ for all idempotents $\varepsilon,\iota$ of $\mathscr{I\!\!P\!F}(\mathbb{N}^n)$.
\end{lemma}

\begin{proof}
By Proposition~\ref{proposition-2.1}$(vii)$ the semigroup $\mathscr{I\!\!P\!F}(\mathbb{N}^n)$ is simple and then Theorem~2.3 from \cite{Clifford-Preston-1961-1967} implies that there exist $\mu,\xi\in\mathscr{I\!\!P\!F}(\mathbb{N}^n)$ such that $f\colon H_{\alpha}\to H_{\mathbb{I}}\colon \chi\mapsto\mu\chi\xi$ maps $\alpha$ to $\mathbb{I}$ and $\beta$ to $\gamma\neq\mathbb{I}$, respectively, which implies that $\mathbb{I}\mathfrak{C}\gamma$. Since $\gamma\neq\mathbb{I}$ is an element of the group of units of the semigroup $\mathscr{I\!\!P\!F}(\mathbb{N}^n)$, by Theorem~\ref{theorem-2.5}, $\gamma$ permutates coordinates of elements of $\mathbb{N}^n$, and hence there exists a positive integer $i_\gamma$ such that $\left(x_i^2\right)\gamma\neq x_{i_\gamma}^2$. Also, by Theorem~\ref{theorem-2.5}, there exists a positive integer $j_\gamma\in\left\{1,\ldots,n\right\}\setminus\left\{i_\gamma\right\}$ such that $\big(x_{i_\gamma}^2\big)\gamma=x_{j_\gamma}^2=(1,\ldots,\underbrace{{2}}_{{j_\gamma}\hbox{\footnotesize{-th}}},\ldots,1)$ is the element of the poset $\left(\mathbb{N}^n,\leqslant\right)$.

By $\varepsilon$ we denote the identity map of the principal filter ${\uparrow}x_{i_\gamma}^2$. Since $\mathfrak{C}$ is a congruence on the semigroup $\mathscr{I\!\!P\!F}(\mathbb{N}^n)$ and $\gamma\in H_{\mathbb{I}}$ we have that
\begin{equation*}
  \varepsilon=\varepsilon\varepsilon=\varepsilon\mathbb{I}\varepsilon\mathfrak{C}\varepsilon\gamma\varepsilon.
\end{equation*}
Since ${j_\gamma}\neq i_\gamma$, the semigroup operation of $\mathscr{I\!\!P\!F}(\mathbb{N}^n)$ implies that $\operatorname{dom}(\varepsilon\gamma\varepsilon) \subsetneqq\operatorname{dom}\varepsilon$. Then by Proposition~\ref{proposition-2.1}$(v)$, $\varepsilon\gamma\varepsilon$ and $\varepsilon$ are non-$\mathscr{H}$-equivalent elements in $\mathscr{I\!\!P\!F}(\mathbb{N}^n)$. Next we apply Lemma~\ref{lemma-2.10}.
\end{proof}

\begin{theorem}\label{theorem-2.12}
Let $n$ be any positive integer. Then every non-identity congruence $\mathfrak{C}$ on the semigroup $\mathscr{I\!\!P\!F}(\mathbb{N}^n)$ is a group congruence.
\end{theorem}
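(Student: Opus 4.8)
The plan is to assemble the preceding lemmas and then invoke the standard fact that an inverse semigroup with a single idempotent is a group. So suppose $\mathfrak{C}$ is a non-identity congruence on $\mathscr{I\!\!P\!F}(\mathbb{N}^n)$. Then there are two distinct elements $\alpha,\beta\in\mathscr{I\!\!P\!F}(\mathbb{N}^n)$ with $\alpha\mathfrak{C}\beta$. I would split into two cases according to whether $\alpha$ and $\beta$ are $\mathscr{H}$-equivalent. If $\alpha$ and $\beta$ are not $\mathscr{H}$-equivalent, Lemma~\ref{lemma-2.10} applies directly and yields $\varepsilon\mathfrak{C}\iota$ for all idempotents $\varepsilon,\iota$ of $\mathscr{I\!\!P\!F}(\mathbb{N}^n)$. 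If instead $\alpha\mathscr{H}\beta$ (and $\alpha\neq\beta$), Lemma~\ref{lemma-2.11} applies and gives the same conclusion. Hence in every case all idempotents of $\mathscr{I\!\!P\!F}(\mathbb{N}^n)$ are $\mathfrak{C}$-equivalent.

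It remains to deduce that the quotient $\mathscr{I\!\!P\!F}(\mathbb{N}^n)/\mathfrak{C}$ is a group. Since $\mathscr{I\!\!P\!F}(\mathbb{N}^n)$ is an inverse semigroup by Proposition~\ref{proposition-2.1}$(i)$, its homomorphic image $\mathscr{I\!\!P\!F}(\mathbb{N}^n)/\mathfrak{C}$ is again an inverse semigroup, and the idempotents of $\mathscr{I\!\!P\!F}(\mathbb{N}^n)/\mathfrak{C}$ are exactly the $\mathfrak{C}$-classes of idempotents of $\mathscr{I\!\!P\!F}(\mathbb{N}^n)$. By the previous paragraph there is only one such class, so $\mathscr{I\!\!P\!F}(\mathbb{N}^n)/\mathfrak{C}$ is an inverse semigroup with exactly one idempotent, and therefore a group. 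Thus $\mathfrak{C}$ is a group congruence.

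I do not expect a serious obstacle here: the real work has already been carried out in Lemmas~\ref{lemma-2.9}, \ref{lemma-2.10} and \ref{lemma-2.11}, which reduce any non-identity congruence to the situation where its restriction to the band $E(\mathscr{I\!\!P\!F}(\mathbb{N}^n))$ is the universal relation. The only point requiring a little care is the bookkeeping in the case analysis — making sure that ``non-identity'' is exploited correctly, namely that a non-identity congruence must relate two \emph{distinct} elements, and that these are fed into the appropriate lemma — together with the citation of the elementary fact that a homomorphic image of an inverse semigroup is inverse and that an inverse semigroup with a unique idempotent is a group.
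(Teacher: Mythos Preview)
Your proposal is correct and follows essentially the same approach as the paper: the paper also splits into the two cases ($\alpha\mathscr{H}\beta$ versus not), invokes Lemmas~\ref{lemma-2.10} and~\ref{lemma-2.11} to conclude that all idempotents are $\mathfrak{C}$-equivalent, and then deduces that the quotient is a group. The only cosmetic difference is that the paper cites Lemma~II.1.10 of Petrich for the final step, whereas you spell out that the quotient inverse semigroup has a unique idempotent (implicitly using Lallement's lemma) and is therefore a group.
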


\begin{proof}
For every non-identity congruence $\mathfrak{C}$ on  $\mathscr{I\!\!P\!F}(\mathbb{N}^n)$ there exist two distinct elements $\alpha,\beta\in\mathscr{I\!\!P\!F}(\mathbb{N}^n)$ such that $\alpha\mathfrak{C}\beta$. If $\alpha\mathscr{H}\beta$ in $\mathscr{I\!\!P\!F}(\mathbb{N}^n)$ then, by Lemma~\ref{lemma-2.11}, all idempotents of the semigroup $\mathscr{I\!\!P\!F}(\mathbb{N}^n)$ are $\mathfrak{C}$-equivalent, otherwise by Lemma~\ref{lemma-2.10} we get the same. Thus, by Lemma~II.1.10 from \cite{Petrich-1984}, the quotient semigroup $\mathscr{I\!\!P\!F}(\mathbb{N}^n)/\mathfrak{C}$ has a unique idempotent and hence it is a group.
\end{proof}

For arbitrary elements $\mathbf{x}=(x_1,\ldots,x_n)$ and $\mathbf{y}=(y_1,\ldots,y_n)$ of $\mathbb{N}^n$ and any permutation
$\sigma\colon\left\{1,\ldots,n\right\}$ $\to\left\{1,\ldots,n\right\}$   we denote
\begin{equation*}
\begin{split}
(\mathbf{x})\sigma &=\left(x_{(1)\sigma^{-1}},\ldots,x_{(n)\sigma^{-1}}\right);\\
\max\{\mathbf{x},\mathbf{y}\}&=\left(\max\{x_1,y_1\},\ldots,\max\{x_n,y_n\}\right).
\end{split}
\end{equation*}

\begin{remark}
Since elements of $\mathbb N^n$ are functions $\{1,\ldots,n\}\to \mathbb N$, the action of $\sigma$ on such $\mathbf{x}$ should be written as $\sigma(\mathbf{x})$ according the the composition rules accepted by the authors and classic traditions. But following the classical traditions of the theory of semigroups of transformations, instead of $\sigma(\mathbf{x})$, we shall write $(\mathbf{x})\sigma$, because the permutation $\sigma$ we consider as a transformation of $\mathbb N^n$. By the way, $(x)\sigma$ is exactly $x\circ\alpha$ in the standard sense.
\end{remark}

\begin{lemma}\label{lemma-2.13}
For every positive integer $n$ and any $\mathbf{x},\mathbf{y} \in\mathbb{N}^n$ the following conditions hold:
\begin{itemize}
  \item[$(i)$] $(\mathbf{x}+\mathbf{y})\sigma=(\mathbf{x})\sigma+(\mathbf{y})\sigma$;
  \item[$(ii)$] $(\mathbf{x}-\mathbf{y})\sigma=(\mathbf{x})\sigma-(\mathbf{y})\sigma$ in the case when $\mathbf{y}\leqslant \mathbf{x}$;
  \item[$(iii)$] $(\max\{\mathbf{x},\mathbf{y}\})\sigma=\max\{(\mathbf{x})\sigma,(\mathbf{y})\sigma\}$.
\end{itemize}
\end{lemma}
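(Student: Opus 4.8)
The plan is to reduce all three identities to a single coordinate-wise observation: by definition the $i$-th coordinate of $(\mathbf{x})\sigma$ is $x_{(i)\sigma^{-1}}$, so the map $\mathbf{x}\mapsto(\mathbf{x})\sigma$ merely relabels coordinates, whereas the operations $+$, $-$ and $\max$ on $\mathbb{N}^n$ are all performed coordinate by coordinate. Hence, for each of the three statements, it suffices to fix an index $i\in\{1,\ldots,n\}$, compute the $i$-th coordinate of the left-hand side and of the right-hand side, and check that they agree; since $i$ is arbitrary the two $n$-tuples coincide.

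Concretely, for $(i)$ I would note that for every $i$ the $i$-th coordinate of $(\mathbf{x}+\mathbf{y})\sigma$ is the $(i)\sigma^{-1}$-th coordinate of $\mathbf{x}+\mathbf{y}$, namely $x_{(i)\sigma^{-1}}+y_{(i)\sigma^{-1}}$, which is precisely the sum of the $i$-th coordinates of $(\mathbf{x})\sigma$ and of $(\mathbf{y})\sigma$, i.e. the $i$-th coordinate of $(\mathbf{x})\sigma+(\mathbf{y})\sigma$. For $(iii)$ the identical computation works verbatim with $+$ replaced by $\max$, using that the $(i)\sigma^{-1}$-th coordinate of $\max\{\mathbf{x},\mathbf{y}\}$ equals $\max\{x_{(i)\sigma^{-1}},y_{(i)\sigma^{-1}}\}$.

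Statement $(ii)$ requires only one extra remark before the same computation applies: one must know that both differences in the asserted equality are defined. The hypothesis $\mathbf{y}\leqslant\mathbf{x}$ gives $y_k\leqslant x_k$ for all $k$, so each $x_{(i)\sigma^{-1}}-y_{(i)\sigma^{-1}}$ is a well-defined non-negative integer; moreover $(\mathbf{y})\sigma\leqslant(\mathbf{x})\sigma$ (either by this coordinate-wise inequality or, more conceptually, because $\sigma$ is an order isomorphism of $\left(\mathbb{N}^n,\leqslant\right)$ by Lemma~\ref{lemma-2.3}), so $(\mathbf{x})\sigma-(\mathbf{y})\sigma$ is defined as well. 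With both sides legitimate, comparing $i$-th coordinates exactly as in $(i)$ finishes the proof.

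I expect no genuine obstacle here; the argument is a routine coordinate chase. The only point deserving (minor) care is the one just mentioned in $(ii)$: checking that both subtractions make sense before equating them, which is the reason the hypothesis $\mathbf{y}\leqslant\mathbf{x}$ is imposed.
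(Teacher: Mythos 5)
Your proof is correct and follows essentially the same route as the paper's: a direct coordinate-wise computation showing that applying $\sigma$ just relabels indices, with $(ii)$ and $(iii)$ treated by the same calculation as $(i)$. Your extra remark that $\mathbf{y}\leqslant\mathbf{x}$ guarantees both subtractions are defined is a sensible (if minor) addition the paper leaves implicit.
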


\begin{proof}
In $(i)$ we have that
\begin{equation*}
  (\mathbf{x}+\mathbf{y})\sigma=(x_1+y_1,\ldots,x_n+y_n) \sigma =(p_1,\ldots,p_n)\sigma
\end{equation*}
for $p_i=x_i+y_i$, $i=1,\ldots,n$, and then
\begin{equation*}
(p_1,\ldots,p_n)\sigma =(p_{(1)\sigma^{-1}},\ldots,p_{(n)\sigma^{-1}})=(x_{(1)\sigma^{-1}}+y_{(1)\sigma^{-1}},\ldots,x_{(n)\sigma^{-1}}+y_{(n)\sigma^{-1}})=
(\mathbf{x})\sigma+(\mathbf{y})\sigma.
\end{equation*}

The proofs of $(ii)$ and $(iii)$ are similar.
\end{proof}

The statement of the following lemma follows from the definition of the semigroup $\mathscr{I\!\!P\!F}(\mathbb{N}^n)$.

\begin{lemma}\label{lemma-2.14}
For every $\alpha\in\mathscr{I\!\!P\!F}(\mathbb{N}^n)$ there exist unique $\mathbf{x},\mathbf{y} \in\mathbb{N}^n$,
$\rho_\alpha,\lambda_\alpha\in\mathscr{I\!\!P\!F}(\mathbb{N}^n)$ and $ \sigma_\alpha \in \mathscr{S}_n$ such that ${\alpha=\rho_\alpha\sigma_\alpha\lambda_\alpha}$ and
\begin{equation*}
\begin{split}
\operatorname{dom}\rho_\alpha&=\operatorname{dom}\alpha={\uparrow}\mathbf{x}, \quad \operatorname{ran}\rho_\alpha=\mathbb{N}^n, \quad (\mathbf{z})\rho_\alpha=\mathbf{z}-\mathbf{x}+\mathbf{1}  \quad \hbox{ for } \; \mathbf{z}\in \operatorname{dom}\rho_\alpha; \\
\operatorname{ran}\lambda_\alpha&=\operatorname{ran}\alpha={\uparrow}\mathbf{y}, \quad \operatorname{dom}\lambda_\alpha=\mathbb{N}^n, \quad  (\mathbf{z})\lambda_\alpha=\mathbf{z}+\mathbf{y}-\mathbf{1}  \quad  \hbox{ for } \;  \mathbf{z}\in \operatorname{dom}\lambda_\alpha,
\end{split}
\end{equation*}
where $\mathbf{1}=(1,\ldots,1)$ is the smallest element of the poset $\left(\mathbb{N}^n,\leqslant\right)$.
\end{lemma}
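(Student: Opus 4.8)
The plan is to normalise $\alpha$ on both sides by the obvious ``translation'' isomorphisms so that what remains is an order automorphism of $\left(\mathbb{N}^n,\leqslant\right)$, and then to invoke Theorem~\ref{theorem-2.5}.

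First I would note that, $\operatorname{dom}\alpha$ and $\operatorname{ran}\alpha$ being principal filters of $\left(\mathbb{N}^n,\leqslant\right)$, there are unique $\mathbf{x},\mathbf{y}\in\mathbb{N}^n$ with $\operatorname{dom}\alpha={\uparrow}\mathbf{x}$ and $\operatorname{ran}\alpha={\uparrow}\mathbf{y}$ (the generator of a principal filter of $\mathbb{N}^n$ is recovered as its least element). Then I would define $\rho_\alpha$ and $\lambda_\alpha$ by the displayed formulas; the only verification needed is the routine observation that $\mathbf{z}\mapsto\mathbf{z}-\mathbf{x}+\mathbf{1}$ is a bijection of ${\uparrow}\mathbf{x}$ onto ${\uparrow}\mathbf{1}=\mathbb{N}^n$ whose inverse $\mathbf{w}\mapsto\mathbf{w}+\mathbf{x}-\mathbf{1}$ is again coordinatewise monotone, so that $\rho_\alpha\in\mathscr{I\!\!P\!F}(\mathbb{N}^n)$ with $\rho_\alpha^{-1}$ as described, and likewise $\lambda_\alpha\in\mathscr{I\!\!P\!F}(\mathbb{N}^n)$ with $(\mathbf{w})\lambda_\alpha^{-1}=\mathbf{w}-\mathbf{y}+\mathbf{1}$ for $\mathbf{w}\in{\uparrow}\mathbf{y}$.

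Next I would put $\sigma_\alpha:=\rho_\alpha^{-1}\alpha\lambda_\alpha^{-1}$. Tracking domains and ranges through the right-to-left composition convention of $\mathscr{I\!\!P\!F}(\mathbb{N}^n)$: $\rho_\alpha^{-1}$ maps $\mathbb{N}^n$ onto ${\uparrow}\mathbf{x}=\operatorname{dom}\alpha$, $\alpha$ maps ${\uparrow}\mathbf{x}$ onto ${\uparrow}\mathbf{y}=\operatorname{dom}(\lambda_\alpha^{-1})$, and $\lambda_\alpha^{-1}$ maps ${\uparrow}\mathbf{y}$ onto $\mathbb{N}^n$; hence $\sigma_\alpha$ is an order isomorphism of $\left(\mathbb{N}^n,\leqslant\right)$ onto itself, i.e.\ $\sigma_\alpha\in H(\mathbb{I})$, and by Theorem~\ref{theorem-2.5} it permutes the coordinates of $\mathbb{N}^n$, so $\sigma_\alpha\in\mathscr{S}_n$. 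Since $\rho_\alpha\rho_\alpha^{-1}$ is the identity map of $\operatorname{dom}\rho_\alpha=\operatorname{dom}\alpha$ and $\lambda_\alpha^{-1}\lambda_\alpha$ is the identity map of $\operatorname{ran}\lambda_\alpha=\operatorname{ran}\alpha$, it follows that $\rho_\alpha\sigma_\alpha\lambda_\alpha=(\rho_\alpha\rho_\alpha^{-1})\alpha(\lambda_\alpha^{-1}\lambda_\alpha)=\alpha$.

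For uniqueness I would argue that in any factorisation $\alpha=\rho_\alpha\sigma_\alpha\lambda_\alpha$ subject to the stated constraints, the requirements $\operatorname{dom}\rho_\alpha=\operatorname{dom}\alpha={\uparrow}\mathbf{x}$ and $\operatorname{ran}\lambda_\alpha=\operatorname{ran}\alpha={\uparrow}\mathbf{y}$ force $\mathbf{x}$ and $\mathbf{y}$ to be the (unique) generators of $\operatorname{dom}\alpha$ and $\operatorname{ran}\alpha$; the prescribed formulas then determine $\rho_\alpha$ and $\lambda_\alpha$ completely, and $\sigma_\alpha=\rho_\alpha^{-1}\alpha\lambda_\alpha^{-1}$ is thereby determined. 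I do not expect any genuine obstacle here; the only point demanding care is the bookkeeping with the composition convention, so that the domains and ranges of the three factors line up correctly.
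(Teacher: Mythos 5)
Your argument is correct and is exactly the verification the paper leaves implicit: the paper states Lemma~\ref{lemma-2.14} without proof (as following from the definition of $\mathscr{I\!\!P\!F}(\mathbb{N}^n)$), and your normalisation $\sigma_\alpha=\rho_\alpha^{-1}\alpha\lambda_\alpha^{-1}$ together with Theorem~\ref{theorem-2.5} and the uniqueness bookkeeping is the intended route. One cosmetic slip: the paper's convention $x(\alpha\beta)=(x\alpha)\beta$ is ``apply the left factor first,'' not right-to-left as you label it, but your actual tracking of domains and ranges already follows the correct convention, so nothing in the proof is affected.
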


Later in this section, for every $\alpha\in\mathscr{I\!\!P\!F}(\mathbb{N}^n)$ by $\rho_\alpha, \lambda_\alpha$ and $\sigma_\alpha$ we denote the elements $\rho_\alpha,\lambda_\alpha\in\mathscr{I\!\!P\!F}(\mathbb{N}^n)$ and $ \sigma_\alpha \in \mathscr{S}_n$ whose are determined in Lemma~\ref{lemma-2.14}.

\begin{lemma}\label{lemma-2.15}
Let $\alpha$ and $\beta$ be elements of the semigroup $\mathscr{I\!\!P\!F}(\mathbb{N}^n)$ such that  $\operatorname{dom}\alpha={\uparrow}\mathbf{x}$, $\operatorname{ran}\alpha={\uparrow}\mathbf{y}$,  $\operatorname{dom}\beta={\uparrow}\mathbf{u}$ and $\operatorname{ran}\beta={\uparrow}\mathbf{v}$. Then
\begin{equation*}
\begin{split}
\operatorname{dom}(\alpha\beta)&={\uparrow}[(\max\{\mathbf{y},\mathbf{u}\}-\mathbf{y})\sigma_\alpha^{-1}+\mathbf{x}];\\
\operatorname{ran}(\alpha\beta)&={\uparrow}[(\max\{\mathbf{y},\mathbf{u}\}-\mathbf{u})\sigma_\beta+\mathbf{v}];\\
\sigma_{\alpha\beta}&=\sigma_\alpha\sigma_\beta.
\end{split}
\end{equation*}
\end{lemma}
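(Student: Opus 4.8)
The plan is to read off the composition from the description of the product in the symmetric inverse monoid $\mathscr{I}_{\mathbb{N}^n}$ and then push everything through the normal form of Lemma~\ref{lemma-2.14}. Recall that $\operatorname{dom}(\alpha\beta)$ is the full $\alpha$-preimage of $\operatorname{ran}\alpha\cap\operatorname{dom}\beta$ and $\operatorname{ran}(\alpha\beta)$ is the full $\beta$-image of the same set. Since $\operatorname{ran}\alpha={\uparrow}\mathbf{y}$, $\operatorname{dom}\beta={\uparrow}\mathbf{u}$ and ${\uparrow}\mathbf{y}\cap{\uparrow}\mathbf{u}={\uparrow}\max\{\mathbf{y},\mathbf{u}\}$ (the semilattice $(\mathscr{P}_{\uparrow}(\mathbb{N}^n),\cap)\cong(\mathbb{N}^n,\max)$), both $\operatorname{dom}(\alpha\beta)$ and $\operatorname{ran}(\alpha\beta)$ are obtained by transporting the single principal filter ${\uparrow}\max\{\mathbf{y},\mathbf{u}\}$ through an order isomorphism of principal filters. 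The one routine fact I would record first is that if $\varphi$ is an order isomorphism of ${\uparrow}\mathbf{a}$ onto ${\uparrow}\mathbf{b}$ and ${\uparrow}\mathbf{c}\subseteq{\uparrow}\mathbf{a}$, then $({\uparrow}\mathbf{c})\varphi={\uparrow}(\mathbf{c}\varphi)$: an order isomorphism carries the least element of a subfilter to the least element of its image, and a principal filter of the poset ${\uparrow}\mathbf{b}$ is a principal filter of $\left(\mathbb{N}^n,\leqslant\right)$. Hence $\operatorname{dom}(\alpha\beta)={\uparrow}\big((\max\{\mathbf{y},\mathbf{u}\})\alpha^{-1}\big)$ and $\operatorname{ran}(\alpha\beta)={\uparrow}\big((\max\{\mathbf{y},\mathbf{u}\})\beta\big)$, so it suffices to evaluate these two images.

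For this I would substitute the normal form $\alpha=\rho_\alpha\sigma_\alpha\lambda_\alpha$ of Lemma~\ref{lemma-2.14}, whence $\alpha^{-1}=\lambda_\alpha^{-1}\sigma_\alpha^{-1}\rho_\alpha^{-1}$ acts by $\mathbf{w}\mapsto\big((\mathbf{w}-\mathbf{y}+\mathbf{1})\sigma_\alpha^{-1}\big)+\mathbf{x}-\mathbf{1}$, and likewise $\beta$ acts on ${\uparrow}\mathbf{u}$ by $\mathbf{w}\mapsto\big((\mathbf{w}-\mathbf{u}+\mathbf{1})\sigma_\beta\big)+\mathbf{v}-\mathbf{1}$. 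Applying Lemma~\ref{lemma-2.13}$(i)$--$(ii)$, together with the fact that every permutation fixes $\mathbf{1}$, to the vector $\mathbf{w}=\max\{\mathbf{y},\mathbf{u}\}$ (for which $\mathbf{y}\leqslant\max\{\mathbf{y},\mathbf{u}\}$ and $\mathbf{u}\leqslant\max\{\mathbf{y},\mathbf{u}\}$, so the subtractions stay in $\mathbb{N}^n$ and Lemma~\ref{lemma-2.13}$(ii)$ applies) yields
\begin{equation*}
(\max\{\mathbf{y},\mathbf{u}\})\alpha^{-1}=(\max\{\mathbf{y},\mathbf{u}\}-\mathbf{y})\sigma_\alpha^{-1}+\mathbf{x}, \qquad (\max\{\mathbf{y},\mathbf{u}\})\beta=(\max\{\mathbf{y},\mathbf{u}\}-\mathbf{u})\sigma_\beta+\mathbf{v},
\end{equation*}
which are exactly the first two claimed formulas.

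For $\sigma_{\alpha\beta}$ I would compute the action of $\alpha\beta$ directly on $\operatorname{dom}(\alpha\beta)$. Writing $\mathbf{z}\alpha=(\mathbf{z})\sigma_\alpha-(\mathbf{x})\sigma_\alpha+\mathbf{y}$ and $\mathbf{w}\beta=(\mathbf{w})\sigma_\beta-(\mathbf{u})\sigma_\beta+\mathbf{v}$ from the normal forms and Lemma~\ref{lemma-2.13}, composing and pushing $\sigma_\beta$ past the translation gives $\mathbf{z}(\alpha\beta)=(\mathbf{z})\sigma_\alpha\sigma_\beta+\mathbf{c}$ for all $\mathbf{z}\in\operatorname{dom}(\alpha\beta)$, where $\mathbf{c}$ is a constant vector; here I use that the action is a right action, $\big((\mathbf{x})\sigma\big)\tau=(\mathbf{x})(\sigma\tau)$, which is immediate from $(\mathbf{x})\sigma=\left(x_{(1)\sigma^{-1}},\ldots,x_{(n)\sigma^{-1}}\right)$ and the left-to-right composition convention. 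Comparing this with the normal form of $\alpha\beta$ provided by Lemma~\ref{lemma-2.14}, and invoking its uniqueness clause, forces $\sigma_{\alpha\beta}=\sigma_\alpha\sigma_\beta$; the constant terms then automatically coincide since both descriptions send the generator of $\operatorname{dom}(\alpha\beta)$ to the generator of $\operatorname{ran}(\alpha\beta)$.

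The only genuine work is the bookkeeping in the last paragraph: tracking which of $\sigma_\alpha$, $\sigma_\alpha^{-1}$ occurs where, confirming the right-action law under the paper's left-to-right composition, and checking that the translations commute with the permutations only after the $\mathbf{1}$-shifts are properly absorbed. None of this is deep, but it is where a sign or direction slip is easiest, so I would do the composition once coordinate-wise to be safe.
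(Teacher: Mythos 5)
Your proposal is correct and follows essentially the same route as the paper: compute $\operatorname{dom}(\alpha\beta)$ and $\operatorname{ran}(\alpha\beta)$ as the $\alpha^{-1}$-image and $\beta$-image of ${\uparrow}\max\{\mathbf{y},\mathbf{u}\}$, push ${\uparrow}\max\{\mathbf{y},\mathbf{u}\}$ through the normal forms of Lemma~\ref{lemma-2.14} using Lemma~\ref{lemma-2.13}, and then identify $\sigma_{\alpha\beta}$ by computing $\alpha\beta$ pointwise and matching it against the canonical decomposition $\rho_{\alpha\beta}\sigma_{\alpha\beta}\lambda_{\alpha\beta}$. The paper verifies the identity $\alpha\beta=\rho_{\alpha\beta}\sigma_\alpha\sigma_\beta\lambda_{\alpha\beta}$ coordinate-wise rather than via your affine-form-plus-uniqueness phrasing, but this is only a difference in bookkeeping, not in substance.
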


\begin{proof}
The definition of the domain of the composition of partial transformations (see \cite[p.~4]{Lawson-1998}) implies that
\begin{equation*}
\operatorname{dom}(\alpha\beta)=[\operatorname{ran}\alpha\cap \operatorname{dom}\beta]\alpha^{-1}
=[{\uparrow}\mathbf{y}\cap {\uparrow}\mathbf{u}]\alpha^{-1}
= [{\uparrow}\max\{ \mathbf{y},\mathbf{u}\}]\alpha^{-1}.
\end{equation*}
Since $\alpha$ is a monotone bijection between principal filters of the poset $\left(\mathbb{N}^n,\leqslant\right)$ we get that
\begin{equation*}
  [{\uparrow}\max\{ \mathbf{y},\mathbf{u}\}]\alpha^{-1}= {\uparrow}\left([\max\{ \mathbf{y},\mathbf{u}\}]\alpha^{-1}\right),
\end{equation*}
and, by Lemma~\ref{lemma-2.14},
\begin{equation*}
\begin{split}
\operatorname{dom}(\alpha\beta)&={\uparrow}\left([\max\{ \mathbf{y},\mathbf{u}\}]\alpha^{-1}\right)=\\
&= {\uparrow}\left([\max\{ \mathbf{y},\mathbf{u}\}]\lambda_\alpha^{-1}\sigma_\alpha^{-1}\rho_\alpha^{-1}\right)=\\
&= {\uparrow}\left([\max\{ \mathbf{y},\mathbf{u}\}-\mathbf{y}+\mathbf{1}]\sigma_\alpha^{-1}\rho_\alpha^{-1}\right)=\\
&= {\uparrow}\left([(\max\{ \mathbf{y},\mathbf{u}\}-\mathbf{y})\sigma_\alpha^{-1}+\mathbf{1}]\rho_\alpha^{-1}\right)=\\
&= {\uparrow}[(\max\{ \mathbf{y},\mathbf{u}\}-\mathbf{y})\sigma_\alpha^{-1}+\mathbf{x}].
\end{split}
\end{equation*}

Similarly, the definition of the range of the composition of partial transformations (see \cite[p.~4]{Lawson-1998}) implies that
\begin{equation*}
\operatorname{ran}(\alpha\beta)=[\operatorname{ran}\alpha\cap \operatorname{dom}\beta]\beta
=[{\uparrow}\mathbf{y}\cap {\uparrow}\mathbf{u}]\beta
= [{\uparrow}\max\{ \mathbf{y},\mathbf{u}\}]\beta.
\end{equation*}
Since $\beta$ is a monotone bijection between principal filters of the poset $\left(\mathbb{N}^n,\leqslant\right)$ we get that
\begin{equation*}
  [{\uparrow}\max\{ \mathbf{y},\mathbf{u}\}]\beta= {\uparrow}\left([\max\{ \mathbf{y},\mathbf{u}\}]\beta\right),
\end{equation*}
and, by Lemma~\ref{lemma-2.14},
\begin{equation*}
\begin{split}
\operatorname{ran}(\alpha\beta)&={\uparrow}\left([\max\{ \mathbf{y},\mathbf{u}\}]\beta\right)=\\
&= {\uparrow}\left([\max\{ \mathbf{y},\mathbf{u}\}]\rho_\beta\sigma_\beta\lambda_\beta\right)=\\
&= {\uparrow}\left([\max\{ \mathbf{y},\mathbf{u}\}-\mathbf{u}+\mathbf{1}]\sigma_\beta\lambda_\beta\right)=\\
&= {\uparrow}\left([(\max\{ \mathbf{y},\mathbf{u}\}-\mathbf{u})\sigma_\beta+\mathbf{1}]\lambda_\beta\right)=\\
&= {\uparrow}[(\max\{\mathbf{y},\mathbf{u}\}-\mathbf{u})\sigma_\beta+\mathbf{v}].
\end{split}
\end{equation*}

We observe that definitions of elements $\sigma_\alpha$ and $\sigma_\beta$ imply that
\begin{equation*}
\operatorname{dom}\sigma_\alpha=\operatorname{ran}\sigma_\alpha=\operatorname{dom}\sigma_\beta=\operatorname{ran}\sigma_\beta=\mathbb{N}^n,
\end{equation*}
and hence $\operatorname{dom}(\sigma_\alpha\sigma_\beta)=\operatorname{ran}(\sigma_\alpha\sigma_\beta)=\mathbb{N}^n$. Since $\rho_\alpha,\lambda_\alpha,\sigma_\alpha,\rho_\beta,\lambda_\beta,\sigma_\beta$ are partial bijection of $\mathbb{N}^n$ and $\operatorname{dom}(\alpha\beta)=\operatorname{dom}(\rho_{\alpha\beta}\sigma_\alpha\sigma_\beta\lambda_{\alpha\beta})$, the equality $\alpha\beta=\rho_{\alpha\beta}\sigma_\alpha\sigma_\beta\lambda_{\alpha\beta}$ implies that $\sigma_{\alpha\beta}=\sigma_\alpha\sigma_\beta$.

Next we shall show that the equality $\alpha\beta=\rho_{\alpha\beta}\sigma_\alpha\sigma_\beta\lambda_{\alpha\beta}$ holds.
We observe that for any $\mathbf{z}\in\operatorname{dom}(\alpha\beta)$ there exists a unique $\mathbf{p}\in\mathbb{N}^n\cup\{(0,\ldots,0),(1,\ldots,0),\ldots,(0,\ldots,1)\}$ such that
\begin{equation*}
\mathbf{z}=(\max\{ \mathbf{y},\mathbf{u}\}-\mathbf{y})\sigma_\alpha^{-1}+\mathbf{x}+\mathbf{p}.
\end{equation*}
Then we have that
\begin{equation*}
\begin{split}
\left(\mathbf{z}\right)\alpha\beta&=\left((\max\{ \mathbf{y},\mathbf{u}\}-\mathbf{y})\sigma_\alpha^{-1}+\mathbf{x}+\mathbf{p}\right)\alpha\beta=\\
&=\left((\max\{ \mathbf{y},\mathbf{u}\}-\mathbf{y})\sigma_\alpha^{-1}+\mathbf{x}+\mathbf{p}\right)\rho_\alpha\sigma_\alpha\lambda_\alpha\beta=\\
&=\left((\max\{ \mathbf{y},\mathbf{u}\}-\mathbf{y})\sigma_\alpha^{-1}+\mathbf{p}+\mathbf{1}\right)\sigma_\alpha\lambda_\alpha\beta=\\
&=\left(\max\{ \mathbf{y},\mathbf{u}\}-\mathbf{y}+{(\mathbf{p})\sigma_\alpha}+\mathbf{1}\right)\lambda_\alpha\beta=\\
&=\left(\max\{ \mathbf{y},\mathbf{u}\}+{(\mathbf{p})\sigma_\alpha}\right)\rho_\beta\sigma_\beta\lambda_\beta=\\
&=\left(\max\{ \mathbf{y},\mathbf{u}\}-\mathbf{u}+{(\mathbf{p})\sigma_\alpha}+\mathbf{1}\right)\sigma_\beta\lambda_\beta=\\
&=\left((\max\{ \mathbf{y},\mathbf{u}\}-\mathbf{u})\sigma_\beta+{((\mathbf{p})\sigma_\alpha)\sigma_\beta}+\mathbf{1}\right)\lambda_\beta=\\
&=(\max\{ \mathbf{y},\mathbf{u}\}-\mathbf{u})\sigma_\beta+{((\mathbf{p})\sigma_\alpha)\sigma_\beta}+\mathbf{v}
\end{split}
\end{equation*}
and
\begin{equation*}
\begin{split}
(\mathbf{z})\rho_{\alpha\beta}\sigma_\alpha\sigma_\beta\lambda_{\alpha\beta}
&=\left((\max\{\mathbf{y},\mathbf{u}\}-\mathbf{y})\sigma_\alpha^{-1}+\mathbf{x}+\mathbf{p}\right)\rho_{\alpha\beta}\sigma_\alpha\sigma_\beta\lambda_{\alpha\beta}=\\
&=\left(\mathbf{p}+1\right)\sigma_\alpha\sigma_\beta\lambda_{\alpha\beta}=\\
&=\left(((\mathbf{p})\sigma_\alpha)\sigma_\beta+\mathbf{1}\right)\lambda_{\alpha\beta}=\\
&=(\max\{ \mathbf{y},\mathbf{u}\}-\mathbf{u})\sigma_\beta+\mathbf{v}+{((\mathbf{p})\sigma_\alpha)\sigma_\beta}).
\end{split}
\end{equation*}
This completes the proof of the lemma.
\end{proof}

\begin{proposition}\label{proposition-2.16}
Let $\alpha$ and $\beta$ be elements of the semigroup $\mathscr{I\!\!P\!F}(\mathbb{N}^n)$ such that  $\operatorname{dom}\alpha={\uparrow}\mathbf{x}$, $\operatorname{ran}\alpha={\uparrow}\mathbf{y}$,  $\operatorname{dom}\beta={\uparrow}\mathbf{u}$ and $\operatorname{ran}\beta={\uparrow}\mathbf{v}$. Then the following statements hold:
\begin{itemize}
  \item[$(i)$] $\alpha$ is an idempotent of $\mathscr{I\!\!P\!F}(\mathbb{N}^n)$ if and only if $\lambda_\alpha$ is an inverse partial map to $\rho_\alpha$, i.e., $\mathbf{x}=\mathbf{y}$, and $\sigma_\alpha$ is the identity element of the group $\mathscr{S}_n$;
  \item[$(ii)$] $\alpha$ is inverse of $\beta$ in $\mathscr{I\!\!P\!F}(\mathbb{N}^n)$ if and only if $\mathbf{x}=\mathbf{v}$, $\mathbf{y}=\mathbf{u}$ (i.e., $\lambda_\alpha$ is an inverse partial map to $\rho_\beta$ and $\lambda_\beta$ is an inverse partial map to $\rho_\alpha$) and $\sigma_\alpha$ is inverse of $\sigma_\beta$ in the group $\mathscr{S}_n$.
\end{itemize}
\end{proposition}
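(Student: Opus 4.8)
The plan is to deduce both items from the uniqueness part of Lemma~\ref{lemma-2.14} together with the composition formulas of Lemma~\ref{lemma-2.15}, using repeatedly that $\sigma_\alpha$ acts by permuting coordinates, hence fixes $\mathbf{1}$ and is compatible with $+$, $-$, $\max$ in the sense of Lemma~\ref{lemma-2.13}. First I would record two elementary observations. (a) By Lemma~\ref{lemma-2.14}, an element $\alpha\in\mathscr{I\!\!P\!F}(\mathbb{N}^n)$ is uniquely determined by the triple $(\operatorname{dom}\alpha,\operatorname{ran}\alpha,\sigma_\alpha)$, since $\rho_\alpha$ is recovered from $\operatorname{dom}\alpha$ and $\lambda_\alpha$ from $\operatorname{ran}\alpha$; consequently $\mu=\nu$ in $\mathscr{I\!\!P\!F}(\mathbb{N}^n)$ if and only if $\operatorname{dom}\mu=\operatorname{dom}\nu$, $\operatorname{ran}\mu=\operatorname{ran}\nu$ and $\sigma_\mu=\sigma_\nu$. (b) For $\mathbf{x},\mathbf{y}\in\mathbb{N}^n$, the shift map $\lambda_\alpha\colon\mathbb{N}^n\to{\uparrow}\mathbf{y}$, $\mathbf{z}\mapsto\mathbf{z}+\mathbf{y}-\mathbf{1}$, is the inverse partial map of $\rho_\alpha\colon{\uparrow}\mathbf{x}\to\mathbb{N}^n$, $\mathbf{z}\mapsto\mathbf{z}-\mathbf{x}+\mathbf{1}$, precisely when $\mathbf{x}=\mathbf{y}$; likewise $\lambda_\alpha$ is inverse to $\rho_\beta$ iff $\mathbf{y}=\mathbf{u}$, and $\lambda_\beta$ is inverse to $\rho_\alpha$ iff $\mathbf{x}=\mathbf{v}$. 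These observations justify the parenthetical reformulations appearing in both items, so it remains to prove the numerical characterisations.

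For $(i)$: $\alpha$ is an idempotent iff $\alpha\alpha=\alpha$, which by observation (a) is equivalent to $\operatorname{dom}(\alpha\alpha)=\operatorname{dom}\alpha$, $\operatorname{ran}(\alpha\alpha)=\operatorname{ran}\alpha$ and $\sigma_{\alpha\alpha}=\sigma_\alpha$. Applying Lemma~\ref{lemma-2.15} with $\beta=\alpha$ (so $\mathbf{u}=\mathbf{x}$, $\mathbf{v}=\mathbf{y}$), the condition $\sigma_{\alpha\alpha}=\sigma_\alpha$ reads $\sigma_\alpha\sigma_\alpha=\sigma_\alpha$, i.e. $\sigma_\alpha$ is the identity of $\mathscr{S}_n$; substituting this, $\operatorname{dom}(\alpha\alpha)={\uparrow}[\max\{\mathbf{y},\mathbf{x}\}-\mathbf{y}+\mathbf{x}]$ equals ${\uparrow}\mathbf{x}$ iff $\max\{\mathbf{y},\mathbf{x}\}=\mathbf{y}$, i.e. $\mathbf{x}\leqslant\mathbf{y}$, and $\operatorname{ran}(\alpha\alpha)={\uparrow}[\max\{\mathbf{y},\mathbf{x}\}-\mathbf{x}+\mathbf{y}]$ equals ${\uparrow}\mathbf{y}$ iff $\mathbf{y}\leqslant\mathbf{x}$; together these force $\mathbf{x}=\mathbf{y}$. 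Conversely, if $\mathbf{x}=\mathbf{y}$ and $\sigma_\alpha$ is the identity, all three conditions are satisfied, so $\alpha\alpha=\alpha$ by observation (a). Combining with observation (b) ($\mathbf{x}=\mathbf{y}$ is equivalent to $\lambda_\alpha$ being inverse to $\rho_\alpha$) gives the claimed statement.

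For $(ii)$: since $\mathscr{I\!\!P\!F}(\mathbb{N}^n)$ is an inverse semigroup by Proposition~\ref{proposition-2.1}$(i)$ and inverses are unique, $\alpha$ is an inverse of $\beta$ if and only if $\alpha=\beta^{-1}$, where $\beta^{-1}$ is the inverse partial map of $\beta$ in the ambient symmetric inverse monoid $\mathscr{I}_{\mathbb{N}^n}$. From $\beta=\rho_\beta\sigma_\beta\lambda_\beta$ we get $\beta^{-1}=\lambda_\beta^{-1}\sigma_\beta^{-1}\rho_\beta^{-1}$, whence $\operatorname{dom}\beta^{-1}=\operatorname{ran}\beta={\uparrow}\mathbf{v}$ and $\operatorname{ran}\beta^{-1}=\operatorname{dom}\beta={\uparrow}\mathbf{u}$; moreover $\lambda_\beta^{-1}$ and $\rho_\beta^{-1}$ are exactly the shift maps $\rho_{\beta^{-1}}$ and $\lambda_{\beta^{-1}}$ prescribed by Lemma~\ref{lemma-2.14} for $\beta^{-1}$, while $\sigma_\beta^{-1}$ permutes coordinates, so the uniqueness in Lemma~\ref{lemma-2.14} yields $\sigma_{\beta^{-1}}=\sigma_\beta^{-1}$. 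By observation (a) again, $\alpha=\beta^{-1}$ if and only if $\operatorname{dom}\alpha=\operatorname{dom}\beta^{-1}$, $\operatorname{ran}\alpha=\operatorname{ran}\beta^{-1}$ and $\sigma_\alpha=\sigma_{\beta^{-1}}$, that is, ${\uparrow}\mathbf{x}={\uparrow}\mathbf{v}$, ${\uparrow}\mathbf{y}={\uparrow}\mathbf{u}$ and $\sigma_\alpha=\sigma_\beta^{-1}$, equivalently $\mathbf{x}=\mathbf{v}$, $\mathbf{y}=\mathbf{u}$ and $\sigma_\alpha=\sigma_\beta^{-1}$; the reformulation in terms of $\rho$ and $\lambda$ follows from observation (b).

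I do not expect a serious obstacle: once Lemmas~\ref{lemma-2.14} and~\ref{lemma-2.15} are available, both items are essentially bookkeeping. The only points requiring mild care are keeping track of which of $\sigma$ and $\sigma^{-1}$ appears in the domain and range formulas of Lemma~\ref{lemma-2.15}, and resisting the temptation to characterise inverses by the weaker condition ``$\alpha\beta$ and $\beta\alpha$ are idempotents'' (which is insufficient, as any pair of distinct idempotents shows); invoking the uniqueness of the decomposition of Lemma~\ref{lemma-2.14} sidesteps this.
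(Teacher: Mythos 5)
Your proof is correct, and it rests on the same foundation as the paper's argument, namely the uniqueness of the decomposition $\alpha=\rho_\alpha\sigma_\alpha\lambda_\alpha$ from Lemma~\ref{lemma-2.14}, but you exploit it along a somewhat different route at both steps. For $(i)$ the paper obtains $\mathbf{x}=\mathbf{y}$ at once from the structural fact that an idempotent partial bijection is the identity map of its domain, and then cancels $\lambda_\alpha\rho_\alpha$ in $\alpha=\alpha\alpha$ to get $\sigma_\alpha\sigma_\alpha=\sigma_\alpha$; you instead put $\beta=\alpha$ in the composition formulas of Lemma~\ref{lemma-2.15} and read off $\mathbf{x}\leqslant\mathbf{y}$ and $\mathbf{y}\leqslant\mathbf{x}$ from $\operatorname{dom}(\alpha\alpha)$ and $\operatorname{ran}(\alpha\alpha)$, a slightly longer but perfectly sound computation, and your criterion that elements coincide iff their triples $(\operatorname{dom},\operatorname{ran},\sigma)$ coincide is a legitimate consequence of Lemma~\ref{lemma-2.14}. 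For $(ii)$ the paper first extracts $\mathbf{x}=\mathbf{v}$, $\mathbf{y}=\mathbf{u}$ from $\operatorname{dom}\alpha=\operatorname{ran}\beta$ and $\operatorname{ran}\alpha=\operatorname{dom}\beta$ and then applies part $(i)$ to the idempotent $\alpha\beta=\rho_\alpha\sigma_\alpha\sigma_\beta\lambda_\beta$ to conclude that $\sigma_\alpha\sigma_\beta$ is the identity, whereas you compute the canonical decomposition of $\beta^{-1}$ directly ($\beta^{-1}=\lambda_\beta^{-1}\sigma_\beta^{-1}\rho_\beta^{-1}$, hence $\sigma_{\beta^{-1}}=\sigma_\beta^{-1}$) and compare triples; your version treats both implications in one stroke and does not need part $(i)$, while the paper's is shorter because it leans on the structural facts about idempotents and inverses of partial bijections (Proposition~\ref{proposition-2.1}). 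Your cautionary remark about the weaker condition ``$\alpha\beta$ and $\beta\alpha$ are idempotents'' does not point to a defect in the paper: there the idempotency of $\alpha\beta$ is invoked only after the domain--range equalities have already been obtained from $\alpha=\beta^{-1}$.
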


\begin{proof}
$(i)$ Suppose that $\alpha$ is an idempotent of $\mathscr{I\!\!P\!F}(\mathbb{N}^n)$. Since $\alpha$ is an identity map of a some principal filter of  $\left(\mathbb{N}^n,\leqslant\right)$, we have that $\mathbf{x}=\mathbf{y}$ and hence $\lambda_\alpha$ is the converse partial map to $\rho_\alpha$. Then the equalities
\begin{equation*}
  \rho_\alpha\sigma_\alpha\lambda_\alpha=\alpha=\alpha\alpha=\rho_\alpha\sigma_\alpha\lambda_\alpha\rho_\alpha\sigma_\alpha\lambda_\alpha= \rho_\alpha\sigma_\alpha\sigma_\alpha\lambda_\alpha
\end{equation*}
and Lemma~\ref{lemma-2.14} imply that $\sigma_\alpha=\sigma_\alpha\sigma_\alpha$, and hence $\sigma_\alpha$ is the identity element of the group $\mathscr{S}_n$.

The converse statement is obvious.

$(ii)$ Suppose that $\alpha$ and $\beta$ are inverse elements in $\mathscr{I\!\!P\!F}(\mathbb{N}^n)$. Then $\operatorname{dom}\alpha=\operatorname{ran}\beta$ and $\operatorname{ran}\alpha=\operatorname{dom}\beta$ and hence we get that $\mathbf{x}=\mathbf{v}$ and $\mathbf{y}=\mathbf{u}$. This and Lemma~\ref{lemma-2.14} imply that $\lambda_\alpha$ is the converse partial map to $\rho_\beta$ and $\lambda_\beta$ is an inverse partial map to $\rho_\alpha$. Since $\alpha\beta$ is an idempotent of $\mathscr{I\!\!P\!F}(\mathbb{N}^n)$ the above arguments imply that
\begin{equation*}
  \alpha\beta=\rho_\alpha\sigma_\alpha\lambda_\alpha\rho_\beta\sigma_\beta\lambda_\beta=\rho_\alpha\sigma_\alpha\sigma_\beta\lambda_\beta,
\end{equation*}
and hence by statement $(i)$ the element $\sigma_\alpha\sigma_\beta$ is the identity of the group $\mathscr{S}_n$. This implies that $\sigma_\alpha$ is inverse of $\sigma_\beta$ in $\mathscr{S}_n$.

The converse statement is obvious.
\end{proof}

\begin{remark}\label{remark-2.17}
In the bicyclic semigroup ${\mathscr{C}}(p,q)$ the semigroup operation is determined in the following way:
\begin{equation*}
  p^iq^j\cdot p^kq^l=
\left\{
  \begin{array}{ll}
    p^iq^{j-k+l}, & \hbox{if~} j>k;\\
    p^iq^l,       & \hbox{if~} j=k;\\
    p^{i-j+k}q^l, & \hbox{if~} j<k,
  \end{array}
\right.
\end{equation*}
which is equivalent to the following formula:
\begin{equation*}
  p^iq^j\cdot p^kq^l=p^{i+\max\{j,k\}-j}q^{l+\max\{j,k\}-k}.
\end{equation*}
The above implies that the bicyclic semigroup ${\mathscr{C}}(p,q)$ is isomorphic to the semigroup $(S,*)$ which is defined on the square $\mathbb{N}_0\times \mathbb{N}_0$ of the the set of non-negative integers with the following multiplication:
\begin{equation}\label{eq-2.1}
  (i,j)*(k,l)=(i+\max\{j,k\}-j,l+\max\{j,k\}-k).
\end{equation}
\end{remark}

Later, for an arbitrary positive integer $n$ by ${\mathscr{C}}(p,q)^n$ we shall denote the $n$-th direct power of $(S,*)$, i.e., ${\mathscr{C}}(p,q)^n$ is the $n$-th power of $\mathbb{N}_0\times \mathbb{N}_0$ with the point-wise semigroup operation defined by formula~\eqref{eq-2.1}. Also, by $[\mathbf{x},\mathbf{y}]$ we denote the ordered collection $\left((x_1,y_1),\dots,(x_n,y_n)\right)$ of ${\mathscr{C}}(p,q)^n$, where $\mathbf{x}=(x_1,\ldots,x_n)$ and $\mathbf{y}=(y_1,\ldots,y_n)$, and for arbitrary permutation $\sigma\colon\left\{1,\ldots,n\right\}\to\left\{1,\ldots,n\right\}$  we put
\begin{equation*}\label{eq-2.2}
(\mathbf{x})\sigma =\left(x_{(1)\sigma^{-1}},\ldots,x_{(n)\sigma^{-1}}\right).
\end{equation*}

Let $\operatorname{\mathbf{Aut}}({{\mathscr{C}^n}(p,q)})$ be the automorphism group of  the semigroup ${\mathscr{C}}(p,q)^n$. We define a map $\Phi$ from $\mathscr{S}_n$ into all selfmaps of the semigroup ${\mathscr{C}}(p,q)^n$ putting $\sigma\mapsto \Phi_{\sigma}$, where the map $\Phi_{\sigma}\colon {{\mathscr{C}^n}(p,q)}\to {{\mathscr{C}^n}(p,q)}$ is defined by the formula:
\begin{equation}\label{eq-2.3}
  ([\mathbf{x},\mathbf{y}])\Phi_{\sigma}=[(\mathbf{x})\sigma,(\mathbf{y})\sigma].
\end{equation}
It is obvious that the map $\Phi_{\sigma}$ is a bijection of ${\mathscr{C}}(p,q)^n$ and $\Phi_{\sigma_1}\neq\Phi_{\sigma_2}$ for distinct $\sigma_1,\sigma_2\in\mathscr{S}_n$.

Since
\begin{equation*}
\begin{split}
([\mathbf{x},\mathbf{y}]*[\mathbf{u},\mathbf{v}])\Phi_\sigma=&\left([\max\{\mathbf{y},\mathbf{u}\}-\mathbf{y}+\mathbf{x}, \max\{\mathbf{y},\mathbf{u}\}-\mathbf{u}+\mathbf{v}]\right)\Phi_\sigma=\\
=&\left[(\max\{\mathbf{y},\mathbf{u}\}-\mathbf{y}+\mathbf{x})\sigma,(\max\{\mathbf{y},\mathbf{u}\}-\mathbf{u}+\mathbf{v})\sigma\right]=\\
=&\left[\max\{(\mathbf{y})\sigma,(\mathbf{u})\sigma\}-(\mathbf{y})\sigma+(\mathbf{x})\sigma, \max\{(\mathbf{y})\sigma,(\mathbf{u})\sigma\}-(\mathbf{u})\sigma+(\mathbf{v})\sigma\right]=\\
=&\left[(\mathbf{x})\sigma,(\mathbf{y})\sigma]*[(\mathbf{u})\sigma,(\mathbf{v})\sigma\right]=\\
=&\left[\mathbf{x},\mathbf{y}\right]\Phi_\sigma*\left[\mathbf{u},\mathbf{v}\right]\Phi_\sigma
\end{split}
\end{equation*}
and
\begin{equation*}
  ([\mathbf{x},\mathbf{y}])\Phi_{\sigma_1\sigma_2}=[(\mathbf{x})\sigma_1\sigma_2,(\mathbf{y})\sigma_1\sigma_2]=
([(\mathbf{x})\sigma_1,(\mathbf{y})\sigma_1])\Phi_{\sigma_2}=([\mathbf{x},\mathbf{y}])\Phi_{\sigma_1}\Phi_{\sigma_2}
\end{equation*}
for any $[\mathbf{x},\mathbf{y}],[\mathbf{u},\mathbf{v}]\in{\mathscr{C}}(p,q)^n$ and any $\sigma,\sigma_1,\sigma_2 \in \mathscr{S}_n$, the following proposition holds:

\begin{proposition}\label{proposition-2.18}
For arbitrary positive integer $n$ the map $\Phi$ is an injective homomorphism from $\mathscr{S}_n$ into the group $\operatorname{\mathbf{Aut}}({{\mathscr{C}^n}(p,q)})$ of automorphisms of the semigroup ${\mathscr{C}}(p,q)^n$.
\end{proposition}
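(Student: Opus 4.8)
The plan is to assemble the proposition from the three verifications already displayed just before its statement, organized into four short steps. \emph{First}, I would check that for each $\sigma\in\mathscr{S}_n$ the map $\Phi_\sigma$ of~\eqref{eq-2.3} is a well-defined bijection of ${\mathscr{C}}^n(p,q)$. Since $(\mathbf{x})\sigma$ merely permutes the coordinates of $\mathbf{x}$, the map $\Phi_{\sigma^{-1}}$ is a two-sided inverse of $\Phi_\sigma$; this reduces to the fact that $\sigma\mapsto(\cdot)\sigma$ is a right action of $\mathscr{S}_n$ on $\mathbb{N}_0^n$, i.e. $(\mathbf{x})(\sigma_1\sigma_2)=((\mathbf{x})\sigma_1)\sigma_2$, which in turn follows coordinatewise from $(i)(\sigma_1\sigma_2)^{-1}=(i)\sigma_2^{-1}\sigma_1^{-1}$ (using the paper's left-to-right composition convention on $\mathscr{S}_n$).

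\emph{Second}, I would show $\Phi_\sigma\in\operatorname{\mathbf{Aut}}({\mathscr{C}}^n(p,q))$, i.e. that $\Phi_\sigma$ respects $*$. Here the key input is Lemma~\ref{lemma-2.13}, which says precisely that the coordinate permutation commutes with $+$, with truncated $-$, and with $\max$. Feeding this into the componentwise product formula coming from~\eqref{eq-2.1}, namely $[\mathbf{x},\mathbf{y}]*[\mathbf{u},\mathbf{v}]=[\max\{\mathbf{y},\mathbf{u}\}-\mathbf{y}+\mathbf{x},\ \max\{\mathbf{y},\mathbf{u}\}-\mathbf{u}+\mathbf{v}]$, yields the chain of equalities displayed above the proposition, hence $([\mathbf{x},\mathbf{y}]*[\mathbf{u},\mathbf{v}])\Phi_\sigma=[\mathbf{x},\mathbf{y}]\Phi_\sigma*[\mathbf{u},\mathbf{v}]\Phi_\sigma$. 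Together with Step~1 this gives $\Phi_\sigma\in\operatorname{\mathbf{Aut}}({\mathscr{C}}^n(p,q))$, so $\Phi$ indeed maps into $\operatorname{\mathbf{Aut}}({\mathscr{C}}^n(p,q))$.

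\emph{Third}, I would verify that $\Phi$ is a homomorphism: from~\eqref{eq-2.3} and the right-action identity of Step~1,
$([\mathbf{x},\mathbf{y}])\Phi_{\sigma_1\sigma_2}=[(\mathbf{x})\sigma_1\sigma_2,(\mathbf{y})\sigma_1\sigma_2]=([(\mathbf{x})\sigma_1,(\mathbf{y})\sigma_1])\Phi_{\sigma_2}=([\mathbf{x},\mathbf{y}])\Phi_{\sigma_1}\Phi_{\sigma_2}$, which is the second displayed computation above. \emph{Fourth}, for injectivity, if $\sigma_1\neq\sigma_2$ pick $i$ with $(i)\sigma_1\neq(i)\sigma_2$ and apply both maps to the element $[\mathbf{x},\mathbf{y}]$ whose $i$-th component is $(1,0)$ and all others $(0,0)$: the two images have their nonzero component in different coordinates, so $\Phi_{\sigma_1}\neq\Phi_{\sigma_2}$.

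I do not expect a genuine obstacle; essentially all content is packaged in Lemma~\ref{lemma-2.13} and in the bookkeeping with the $\sigma^{-1}$ appearing in the definition of $(\mathbf{x})\sigma$. The one point demanding care is confirming that $\sigma\mapsto(\cdot)\sigma$ is a \emph{right} action, so that the paper's left-to-right map-composition convention makes $\Phi$ a homomorphism rather than an anti-homomorphism; once that is pinned down, the rest is routine.
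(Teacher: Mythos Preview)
Your proposal is correct and follows essentially the same route as the paper: the paper establishes the proposition by the two displayed computations immediately preceding it (that $\Phi_\sigma$ respects $*$ via Lemma~\ref{lemma-2.13}, and that $\Phi_{\sigma_1\sigma_2}=\Phi_{\sigma_1}\Phi_{\sigma_2}$), together with the one-line remark that $\Phi_\sigma$ is a bijection and that $\Phi_{\sigma_1}\neq\Phi_{\sigma_2}$ for distinct $\sigma_1,\sigma_2$. Your four steps are exactly these ingredients, with the bijectivity and injectivity spelled out more explicitly than the paper's ``it is obvious''; the extra care you take with the right-action bookkeeping is appropriate but not something the paper dwells on.
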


\begin{theorem}\label{theorem-2.19}
For arbitrary positive integer $n$ the semigroup $\mathscr{I\!\!P\!F}(\mathbb{N}^n)$ is isomorphic to the semidirect product $\mathscr{S}_n\ltimes_{\Phi}{\mathscr{C}}(p,q)^n$ of the semigroup ${\mathscr{C}}(p,q)^n$ by the group  $\mathscr{S}_n$.
\end{theorem}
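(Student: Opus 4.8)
The plan is to write down an explicit isomorphism $\mathfrak{H}\colon\mathscr{I\!\!P\!F}(\mathbb{N}^n)\to\mathscr{S}_n\ltimes_{\Phi}{\mathscr{C}}^n(p,q)$ built from the canonical factorisation of Lemma~\ref{lemma-2.14}; throughout I write an element of the semidirect product as a pair $(\sigma,c)$ with $\sigma\in\mathscr{S}_n$, $c\in{\mathscr{C}}^n(p,q)$ and multiplication $(\sigma_1,c_1)(\sigma_2,c_2)=(\sigma_1\sigma_2,\,(c_1)\Phi_{\sigma_2}*c_2)$ (the mirror convention for the semidirect product is handled symmetrically). For $\alpha\in\mathscr{I\!\!P\!F}(\mathbb{N}^n)$ with $\operatorname{dom}\alpha={\uparrow}\mathbf{x}$ and $\operatorname{ran}\alpha={\uparrow}\mathbf{y}$ I put
\begin{equation*}
(\alpha)\mathfrak{H}=\bigl(\sigma_\alpha,\,[(\mathbf{x}-\mathbf{1})\sigma_\alpha,\,\mathbf{y}-\mathbf{1}]\bigr),
\end{equation*}
where $\sigma_\alpha\in\mathscr{S}_n$ is the permutation of Lemma~\ref{lemma-2.14} and $\mathbf{1}=(1,\dots,1)$; since $\mathbf{x},\mathbf{y}\geqslant\mathbf{1}$ the pair $[(\mathbf{x}-\mathbf{1})\sigma_\alpha,\mathbf{y}-\mathbf{1}]$ lies in ${\mathscr{C}}^n(p,q)$, so $\mathfrak{H}$ is well defined. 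The shape of this formula is motivated by the rewriting $\alpha=\rho_\alpha\sigma_\alpha\lambda_\alpha=\sigma_\alpha\,(\sigma_\alpha^{-1}\rho_\alpha\sigma_\alpha\lambda_\alpha)$, which exhibits $\mathscr{I\!\!P\!F}(\mathbb{N}^n)$ as a product of the group of units $H(\mathbb{I})\cong\mathscr{S}_n$ (Theorem~\ref{theorem-2.5}) and the inverse subsemigroup of those $\beta$ for which $\sigma_\beta$ is the identity — which by Lemma~\ref{lemma-2.15} is isomorphic to ${\mathscr{C}}^n(p,q)$ via $\beta\mapsto[\mathbf{x}_\beta-\mathbf{1},\mathbf{y}_\beta-\mathbf{1}]$ — while conjugation by coordinate permutations realises the automorphisms $\Phi_\sigma$ of Proposition~\ref{proposition-2.18}.

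First I would check that $\mathfrak{H}$ is a bijection. Injectivity follows from the uniqueness in Lemma~\ref{lemma-2.14}: from $(\alpha)\mathfrak{H}$ one reads off $\sigma_\alpha$, then $\mathbf{y}$ from $\mathbf{y}-\mathbf{1}$ and $\mathbf{x}$ from $(\mathbf{x}-\mathbf{1})\sigma_\alpha$, and the triple $(\sigma_\alpha,\mathbf{x},\mathbf{y})$ determines $\alpha$. For surjectivity, given $(\sigma,[\mathbf{a},\mathbf{b}])$ I would take the order isomorphism ${\uparrow}((\mathbf{a})\sigma^{-1}+\mathbf{1})\to{\uparrow}(\mathbf{b}+\mathbf{1})$ obtained by shifting the domain down onto $\mathbb{N}^n$, permuting coordinates by $\sigma$ (Lemma~\ref{lemma-2.3}), and shifting up onto ${\uparrow}(\mathbf{b}+\mathbf{1})$ — the recipe in the proof of Proposition~\ref{proposition-2.1}$(vi)$ composed with a coordinate permutation — and a direct check gives $(\alpha)\mathfrak{H}=(\sigma,[\mathbf{a},\mathbf{b}])$.

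The heart of the proof is that $\mathfrak{H}$ is multiplicative, and this is exactly what Lemmas~\ref{lemma-2.13} and~\ref{lemma-2.15} were prepared for. Fix $\alpha,\beta$ with $\operatorname{dom}\alpha={\uparrow}\mathbf{x}$, $\operatorname{ran}\alpha={\uparrow}\mathbf{y}$, $\operatorname{dom}\beta={\uparrow}\mathbf{u}$, $\operatorname{ran}\beta={\uparrow}\mathbf{v}$. By Lemma~\ref{lemma-2.15}, $\sigma_{\alpha\beta}=\sigma_\alpha\sigma_\beta$, while $\operatorname{dom}(\alpha\beta)$ and $\operatorname{ran}(\alpha\beta)$ are generated by $(\max\{\mathbf{y},\mathbf{u}\}-\mathbf{y})\sigma_\alpha^{-1}+\mathbf{x}$ and $(\max\{\mathbf{y},\mathbf{u}\}-\mathbf{u})\sigma_\beta+\mathbf{v}$, whence
\begin{equation*}
(\alpha\beta)\mathfrak{H}=\Bigl(\sigma_\alpha\sigma_\beta,\ \bigl[\,\bigl((\max\{\mathbf{y},\mathbf{u}\}-\mathbf{y})\sigma_\alpha^{-1}+\mathbf{x}-\mathbf{1}\bigr)\sigma_\alpha\sigma_\beta,\ (\max\{\mathbf{y},\mathbf{u}\}-\mathbf{u})\sigma_\beta+\mathbf{v}-\mathbf{1}\,\bigr]\Bigr).
\end{equation*}
On the other hand, by the semidirect-product multiplication and \eqref{eq-2.3},
\begin{equation*}
(\alpha)\mathfrak{H}\cdot(\beta)\mathfrak{H}=\Bigl(\sigma_\alpha\sigma_\beta,\ [(\mathbf{x}-\mathbf{1})\sigma_\alpha\sigma_\beta,\,(\mathbf{y}-\mathbf{1})\sigma_\beta]*[(\mathbf{u}-\mathbf{1})\sigma_\beta,\,\mathbf{v}-\mathbf{1}]\Bigr),
\end{equation*}
and expanding this product via \eqref{eq-2.1} and pushing the permutations through $+$, $-$ and $\max$ by Lemma~\ref{lemma-2.13} (using $\max\{\mathbf{y}-\mathbf{1},\mathbf{u}-\mathbf{1}\}-(\mathbf{y}-\mathbf{1})=\max\{\mathbf{y},\mathbf{u}\}-\mathbf{y}$ and $((\mathbf{p})\sigma_\alpha^{-1})\sigma_\alpha\sigma_\beta=(\mathbf{p})\sigma_\beta$) reduces the two ${\mathscr{C}}^n(p,q)$-coordinates to exactly the two entries of the previous display. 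Therefore $(\alpha\beta)\mathfrak{H}=(\alpha)\mathfrak{H}\cdot(\beta)\mathfrak{H}$, and $\mathfrak{H}$ is an isomorphism.

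The single delicate point is this last calculation: one must see that the twist built into the multiplication of $\mathscr{S}_n\ltimes_{\Phi}{\mathscr{C}}^n(p,q)$ — applying $\Phi_{\sigma_\beta}$ to the left factor before multiplying in ${\mathscr{C}}^n(p,q)$ — is precisely what cancels the $\sigma_\alpha^{-1}$ occurring in the Lemma~\ref{lemma-2.15} formula for the generator of $\operatorname{dom}(\alpha\beta)$; once the permutations are moved around correctly by Lemma~\ref{lemma-2.13}, the required identity holds coordinate by coordinate. Everything else is routine bookkeeping given Lemmas~\ref{lemma-2.13}--\ref{lemma-2.15}, Proposition~\ref{proposition-2.18}, and the uniqueness in Lemma~\ref{lemma-2.14}.
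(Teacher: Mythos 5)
Your proposal is correct and takes essentially the same route as the paper: the paper defines $\Psi(\alpha)=\left(\sigma_\alpha,\left[(\mathbf{x})\sigma_\alpha,\mathbf{y}\right]\right)$ and verifies multiplicativity by exactly the same combination of Lemma~\ref{lemma-2.15} with the permutation identities of Lemma~\ref{lemma-2.13} against the semidirect-product multiplication $(\sigma_1,c_1)(\sigma_2,c_2)=(\sigma_1\sigma_2,(c_1)\Phi_{\sigma_2}*c_2)$. Your only deviations are the harmless normalisation by $\mathbf{1}$ (which in fact places the $\mathscr{C}^n(p,q)$-coordinates honestly in $\mathbb{N}_0$, something the paper glosses over) and a more explicit bijectivity check, where the paper merely remarks that $\Psi$ is a bijection.
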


\begin{proof}
We define the map $\Psi\colon \mathscr{I\!\!P\!F}(\mathbb{N}^n)\to\mathscr{S}_n \ltimes_{\Phi}{\mathscr{C}}(p,q)^n$ in the following way:
\begin{equation*}
(\alpha)\Psi=(\sigma_\alpha,[(\mathbf{x})\sigma_\alpha,\mathbf{y}]),
\end{equation*}
for $\alpha\in\mathscr{I\!\!P\!F}(\mathbb{N}^n)$, where ${\uparrow}\mathbf{x}=\operatorname{dom}\alpha$ and ${\uparrow}\mathbf{y}=\operatorname{ran}\alpha$.
Since $\sigma_\alpha$ is a bijection, we have that $\Psi$ is a bijection as well.

For any $\alpha,\beta \in\mathscr{I\!\!P\!F}(\mathbb{N}^n)$ with  $\operatorname{dom}\alpha={\uparrow}\mathbf{x}$, $\operatorname{ran}\alpha={\uparrow}\mathbf{y}$,  $\operatorname{dom}\beta={\uparrow}\mathbf{u}$, $\operatorname{ran}\beta={\uparrow}\mathbf{v}$, by Lemma~\ref{lemma-2.15}, we have that
\begin{equation*}
\begin{split}
(\alpha\beta)\Psi=& \left(\sigma_\alpha\sigma_\beta, [((\max\{\mathbf{y},\mathbf{u}\}-\mathbf{y})\sigma_\alpha^{-1}+\mathbf{x})\sigma_\alpha\sigma_\beta, (\max\{\mathbf{y},\mathbf{u}\}-\mathbf{u})\sigma_\beta+\mathbf{v}]\right)=\\
=&\left(\sigma_\alpha\sigma_\beta,[\max\{(\mathbf{y})\sigma_\beta,(\mathbf{u})\sigma_\beta\}-(\mathbf{y})\sigma_\beta+(\mathbf{x})\sigma_\alpha\sigma_\beta, \max\{(\mathbf{y})\sigma_\beta,(\mathbf{u})\sigma_\beta\}-(\mathbf{u})\sigma_\beta+\mathbf{v}]\right)=\\
=&\left(\sigma_\alpha\sigma_\beta,([(\mathbf{x})\sigma_\alpha,\mathbf{y}])\sigma_\beta*[(\mathbf{u})\sigma_\beta,\mathbf{v}]\right)=\\
=&\left(\sigma_\alpha,[(\mathbf{x})\sigma_\alpha,\mathbf{y}])(\sigma_\beta,[(\mathbf{u})\sigma_\beta,\mathbf{v}]\right)=\\
=&\;(\alpha)\Psi(\beta)\Psi,
\end{split}
\end{equation*}
and hence $\Psi$ is an isomorphism.
\end{proof}

Every inverse semigroup $S$ admits the \emph{least group} congruence $\mathfrak{C}_{\mathbf{mg}}$ (see \cite[Section~III]{Petrich-1984}):
\begin{equation*}
    s\mathfrak{C}_{\mathbf{mg}} t \quad \hbox{if and only if \quad there exists an idempotent} \quad e\in S \quad \hbox{such that} \quad se=te, \qquad s,t\in S.
\end{equation*}

Later, for any $\alpha\in\mathscr{I\!\!P\!F}(\mathbb{N}^n)$ put $\left(\sigma_\alpha,\left[(\mathbf{x}_\alpha)\sigma_\alpha,\mathbf{y}_\alpha\right]\right)= (\alpha)\Psi$ is the image of the element $\alpha$ by the isomorphism $\Psi\colon \mathscr{I\!\!P\!F}(\mathbb{N}^n)\to\mathscr{S}_n \ltimes_{\Phi}{\mathscr{C}}(p,q)^n$ which is defined in the proof of Theorem~\ref{theorem-2.19}.

The following theorem describes the least group congruence on the semigroup $\mathscr{I\!\!P\!F}(\mathbb{N}^n)$.

\begin{theorem}\label{theorem-2.20}
Let $n$ be an arbitrary positive integer. Then $\alpha\mathfrak{C}_{\mathbf{mg}} \beta$ in the semigroup $\mathscr{I\!\!P\!F}(\mathbb{N}^n)$ if and only if
\begin{equation*}
  \sigma_\alpha=\sigma_\beta \qquad \hbox{and} \qquad (\mathbf{x}_\alpha)\sigma_\alpha-\mathbf{y}_\alpha=(\mathbf{x}_\beta)\sigma_\beta-\mathbf{y}_\beta.
\end{equation*}
\end{theorem}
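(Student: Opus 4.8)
The plan is to use the description of the least group congruence recalled just above the statement together with the normal form of Lemma~\ref{lemma-2.14}. First I would note that, since $\mathfrak{C}_{\mathbf{mg}}$ is a congruence on an inverse semigroup and hence is closed under inversion, $\alpha\,\mathfrak{C}_{\mathbf{mg}}\,\beta$ holds if and only if $\varepsilon\alpha=\varepsilon\beta$ for some idempotent $\varepsilon\in\mathscr{I\!\!P\!F}(\mathbb{N}^n)$ (apply the defining condition to $\alpha^{-1},\beta^{-1}$ and take inverses). By Proposition~\ref{proposition-2.1}$(ii)$ every idempotent of $\mathscr{I\!\!P\!F}(\mathbb{N}^n)$ is the identity map $\varepsilon_{\mathbf{w}}$ of a principal filter ${\uparrow}\mathbf{w}$, and $\varepsilon_{\mathbf{w}'}\preccurlyeq\varepsilon_{\mathbf{w}}$ as soon as $\mathbf{w}'\geqslant\mathbf{w}$; since enlarging the testing idempotent preserves an equality $\varepsilon\alpha=\varepsilon\beta$ (multiply by $\varepsilon_{\mathbf{w}'}$ on the left), it suffices to let $\mathbf{w}$ range over the elements of $\mathbb{N}^n$ with $\mathbf{w}\geqslant\mathbf{x}_\alpha$ and $\mathbf{w}\geqslant\mathbf{x}_\beta$, where ${\uparrow}\mathbf{x}_\alpha=\operatorname{dom}\alpha$ and ${\uparrow}\mathbf{x}_\beta=\operatorname{dom}\beta$; for such $\mathbf{w}$ one has $\operatorname{dom}(\varepsilon_{\mathbf{w}}\alpha)={\uparrow}\mathbf{w}\cap{\uparrow}\mathbf{x}_\alpha={\uparrow}\mathbf{w}$ and likewise for $\beta$.

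Next I would record the affine description of an element on its domain: combining Lemma~\ref{lemma-2.14} with Lemma~\ref{lemma-2.13}, for every $\alpha\in\mathscr{I\!\!P\!F}(\mathbb{N}^n)$ and every $\mathbf{z}\in\operatorname{dom}\alpha={\uparrow}\mathbf{x}_\alpha$ one has
\[
(\mathbf{z})\alpha=(\mathbf{z}-\mathbf{x}_\alpha)\sigma_\alpha+\mathbf{y}_\alpha=(\mathbf{z})\sigma_\alpha-(\mathbf{x}_\alpha)\sigma_\alpha+\mathbf{y}_\alpha,
\]
where ${\uparrow}\mathbf{y}_\alpha=\operatorname{ran}\alpha$ and all differences are computed coordinatewise in $\mathbb{Z}^n$. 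Hence, for $\mathbf{w}\geqslant\mathbf{x}_\alpha$, the product $\varepsilon_{\mathbf{w}}\alpha$ has domain ${\uparrow}\mathbf{w}$ and is given there by $\mathbf{z}\mapsto(\mathbf{z})\sigma_\alpha-(\mathbf{x}_\alpha)\sigma_\alpha+\mathbf{y}_\alpha$. Sufficiency of the stated condition is then immediate: if $\sigma_\alpha=\sigma_\beta$ and $(\mathbf{x}_\alpha)\sigma_\alpha-\mathbf{y}_\alpha=(\mathbf{x}_\beta)\sigma_\beta-\mathbf{y}_\beta$, take $\mathbf{w}=\max\{\mathbf{x}_\alpha,\mathbf{x}_\beta\}$; then $\varepsilon_{\mathbf{w}}\alpha$ and $\varepsilon_{\mathbf{w}}\beta$ have the common domain ${\uparrow}\mathbf{w}$ and the same action, so $\varepsilon_{\mathbf{w}}\alpha=\varepsilon_{\mathbf{w}}\beta$ and $\alpha\,\mathfrak{C}_{\mathbf{mg}}\,\beta$. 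For necessity, assume $\varepsilon_{\mathbf{w}}\alpha=\varepsilon_{\mathbf{w}}\beta$ with $\mathbf{w}\geqslant\mathbf{x}_\alpha,\mathbf{x}_\beta$; comparing the two affine formulas on ${\uparrow}\mathbf{w}$ gives
\[
(\mathbf{z})\sigma_\alpha-(\mathbf{z})\sigma_\beta=(\mathbf{x}_\alpha)\sigma_\alpha-\mathbf{y}_\alpha-(\mathbf{x}_\beta)\sigma_\beta+\mathbf{y}_\beta\qquad\hbox{for all }\mathbf{z}\geqslant\mathbf{w}.
\]
The right-hand side is constant, whereas $\mathbf{z}\mapsto(\mathbf{z})\sigma_\alpha-(\mathbf{z})\sigma_\beta$ is constant on the up-set ${\uparrow}\mathbf{w}$ only when $\sigma_\alpha=\sigma_\beta$ (otherwise some coordinate of $(\mathbf{z})\sigma_\alpha$ and the matching coordinate of $(\mathbf{z})\sigma_\beta$ read off distinct entries of $\mathbf{z}$, which may be varied independently); so $\sigma_\alpha=\sigma_\beta$, and then the displayed identity reduces to $(\mathbf{x}_\alpha)\sigma_\alpha-\mathbf{y}_\alpha=(\mathbf{x}_\beta)\sigma_\beta-\mathbf{y}_\beta$.

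I do not anticipate a genuine obstacle; the two delicate points are $(a)$ justifying that one may take the testing idempotent large enough that $\varepsilon_{\mathbf{w}}\alpha$ and $\varepsilon_{\mathbf{w}}\beta$ have domain exactly ${\uparrow}\mathbf{w}$, so that equality of maps collapses to equality of affine formulas, and $(b)$ keeping the partial subtraction in $\mathbb{N}^n$ honest by performing it in $\mathbb{Z}^n$ and checking membership afterwards. A parallel route, which I would mention but not pursue in detail, is to transport the problem through the isomorphism $\Psi$ of Theorem~\ref{theorem-2.19} and combine the fact that the least group congruence on ${\mathscr{C}}^n(p,q)$ identifies $[\mathbf{a},\mathbf{b}]$ with $[\mathbf{c},\mathbf{d}]$ precisely when $\mathbf{a}-\mathbf{b}=\mathbf{c}-\mathbf{d}$ with the form of the idempotents of the semidirect product $\mathscr{S}_n\ltimes_{\Phi}{\mathscr{C}}^n(p,q)$; the direct computation above seems shorter.
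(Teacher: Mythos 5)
Your proposal is correct, and both of your delicate points $(a)$ and $(b)$ are handled adequately: the left-handed form of $\mathfrak{C}_{\mathbf{mg}}$ is legitimately obtained by inversion, enlarging the testing idempotent is justified, and the step ``$(\mathbf{z})\sigma_\alpha-(\mathbf{z})\sigma_\beta$ constant on an up-set forces $\sigma_\alpha=\sigma_\beta$'' is sound, since distinct permutations read off independently variable coordinates of $\mathbf{z}$. Your route differs from the paper's in execution, though not in strategy: the paper transports everything through the isomorphism $\Psi$ of Theorem~\ref{theorem-2.19} and computes $\alpha\varepsilon$ and $\beta\varepsilon$ in the semidirect-product coordinates $\left(\sigma_\alpha,[(\mathbf{x}_\alpha)\sigma_\alpha,\mathbf{y}_\alpha]\right)$ via the multiplication formula coming from Lemma~\ref{lemma-2.15}, using right multiplication by the idempotent exactly as in the definition of $\mathfrak{C}_{\mathbf{mg}}$; you instead stay inside $\mathscr{I\!\!P\!F}(\mathbb{N}^n)$ as a semigroup of partial maps, use only the normal form of Lemma~\ref{lemma-2.14} (plus Lemma~\ref{lemma-2.13}) to write each element as an affine map $\mathbf{z}\mapsto(\mathbf{z})\sigma_\alpha-(\mathbf{x}_\alpha)\sigma_\alpha+\mathbf{y}_\alpha$ on its domain, and pass to left multiplication so that a large idempotent simply restricts both maps to a common principal filter. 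What your version buys is independence from Theorem~\ref{theorem-2.19} and Lemma~\ref{lemma-2.15}, and an explicit treatment of the existential quantifier over idempotents (you say precisely which $\mathbf{w}$ witnesses sufficiency), which in the paper is left implicit in the phrase ``for the idempotent $\varepsilon$''; what the paper's version buys is that the same computation is immediately reusable for Theorem~\ref{theorem-2.21} and formula~\eqref{eq-2.4}, where the semidirect-product coordinates are the natural language. Your closing remark that the $\Psi$-route is the ``parallel route not pursued'' is thus, amusingly, a description of the proof the paper actually gives.
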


\begin{proof}
First we observe that if $\varepsilon$ is an idempotent in $\mathscr{I\!\!P\!F}(\mathbb{N}^n)$ then Proposition~\ref{proposition-2.16}$(i)$ and the definition of the map $\Psi\colon \mathscr{I\!\!P\!F}(\mathbb{N}^n)\to\mathscr{S}_n \ltimes_\Phi{\mathscr{C}}(p,q)^n$ imply that $\sigma_\varepsilon$ is the identity permutation and $\mathbf{x}_\varepsilon=\mathbf{y}_\varepsilon$.
Then the following calculations
\begin{equation*}
\begin{split}
  \left(\sigma_\alpha,\left[(\mathbf{x}_\alpha)\sigma_\alpha,\mathbf{y}_\alpha\right]\right)& \left(\sigma_\varepsilon,\left[(\mathbf{x}_\varepsilon)\sigma_\varepsilon,\mathbf{x}_\varepsilon\right]\right)= \\  =&\left(\sigma_\alpha\sigma_\varepsilon,\left[\max\{(\mathbf{y}_\alpha)\sigma_\varepsilon,\mathbf{x}_\varepsilon\}-(\mathbf{y}_\alpha)\sigma_\varepsilon +((\mathbf{x}_\alpha)\sigma_\alpha)\sigma_\varepsilon, \max\{(\mathbf{y}_\alpha)\sigma_\varepsilon,\mathbf{x}_\varepsilon\}-\mathbf{x}_\varepsilon+\mathbf{x}_\varepsilon\right]\right)= \\
   = & \left(\sigma_\alpha,\left[\max\{\mathbf{y}_\alpha,\mathbf{x}_\varepsilon\}-\mathbf{y}_\alpha+(\mathbf{x}_\alpha)\sigma_\alpha, \max\{\mathbf{y}_\alpha,\mathbf{x}_\varepsilon\}\right]\right),\\
  \left(\sigma_\beta,\left[(\mathbf{x}_\beta)\sigma_\beta,\mathbf{y}_\beta\right]\right)& \left(\sigma_\varepsilon,\left[(\mathbf{x}_\varepsilon)\sigma_\varepsilon,\mathbf{x}_\varepsilon\right]\right)= \\ =&\left(\sigma_\beta\sigma_\varepsilon,\left[\max\{(\mathbf{y}_\beta)\sigma_\varepsilon,\mathbf{x}_\varepsilon\}-(\mathbf{y}_\beta)\sigma_\varepsilon +((\mathbf{x}_\beta)\sigma_\beta)\sigma_\varepsilon, \max\{(\mathbf{y}_\beta)\sigma_\varepsilon,\mathbf{x}_\varepsilon\}-\mathbf{x}_\varepsilon+\mathbf{x}_\varepsilon\right]\right)= \\
   = & \left(\sigma_\beta,\left[\max\{\mathbf{y}_\beta,\mathbf{x}_\varepsilon\}-\mathbf{y}_\beta+(\mathbf{x}_\beta)\sigma_\beta, \max\{\mathbf{y}_\beta,\mathbf{x}_\varepsilon\}\right]\right),
\end{split}
\end{equation*}
imply that for the  idempotent $\varepsilon\in\mathscr{I\!\!P\!F}(\mathbb{N}^n)$ the
 equality $\alpha\varepsilon=\beta\varepsilon$ holds  if and only if
\begin{equation*}
  \sigma_\alpha=\sigma_\beta \qquad \hbox{and} \qquad (\mathbf{x}_\alpha)\sigma_\alpha-\mathbf{y}_\alpha=(\mathbf{x}_\beta)\sigma_\beta-\mathbf{y}_\beta.
\end{equation*}
This completes the proof of the theorem.
\end{proof}

For any positive integer $n$, an arbitrary permutation $\sigma\colon\left\{1,\ldots,n\right\}\to\left\{1,\ldots,n\right\}$ and an ordered collection $\mathbf{z}=(z_1,\ldots,z_n)$ of integers  we put
\begin{equation*}\label{eq-2.2}
(\mathbf{z})\sigma =\left(z_{(1)\sigma^{-1}},\ldots,z_{(n)\sigma^{-1}}\right).
\end{equation*}
Let $\operatorname{\mathbf{Aut}}(\mathbb{Z}^n) $ be the group of automorphisms of the direct $n$-the power of the additive group of integers $\mathbb{Z}(+)$. Next we define a map $\Theta\colon \mathscr{S}_n \to\operatorname{\mathbf{Aut}}(\mathbb{Z}^n)$ in the following way. We put $(\sigma)\Theta=\Theta_\sigma$ is the map from $\mathbb{Z}^n$ into $\mathbb{Z}^n$ which is defined by the formula
\begin{equation*}
 (\mathbf{z})\Theta_\sigma=(\mathbf{z})\sigma.
\end{equation*}
It is obvious that the maps $\Theta$ and $\Theta_\sigma$ are injective. Since
\begin{equation*}
  (\mathbf{z}+\mathbf{v})\sigma=(\mathbf{z})\sigma+(\mathbf{v})\sigma
\end{equation*}
and
\begin{equation*}
 (\mathbf{z})\Theta_{\sigma_1\sigma_2}=(\mathbf{z})(\sigma_1\sigma_2)=((\mathbf{z})\sigma_1)\sigma_2=((\mathbf{z})\Theta_{\sigma_1})\Theta_{\sigma_2},
\end{equation*}
for any $\mathbf{z}=(z_1,\ldots,z_n)$ and $\mathbf{v}=(v_1,\ldots,v_n)$ from the direct $n$-th power of the group $\mathbb{Z}(+)$ and any $\sigma,\sigma_1,\sigma_2\in \mathscr{S}_n$, we have that so defined map $\Theta\colon \mathscr{S}_n \to\operatorname{\mathbf{Aut}}(\mathbb{Z}^n)$ is an injective homomorphism.

\begin{theorem}\label{theorem-2.21}
For an arbitrary positive integer $n$ the quotient semigroup $\mathscr{I\!\!P\!F}(\mathbb{N}^n)/\mathfrak{C}_{\mathbf{mg}}$ is isomorphic to the semidirect product $\mathscr{S}_n\ltimes_{\Theta}(\mathbb{Z}(+))^n$ of the direct $n$-th power of the additive group of integers $(\mathbb{Z}(+))^n$ by the group of permutation  $\mathscr{S}_n$.
\end{theorem}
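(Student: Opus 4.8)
The plan is to build the isomorphism directly from the two structural facts already established. By Theorem~\ref{theorem-2.19} we may work inside $\mathscr{S}_n\ltimes_{\Phi}{\mathscr{C}}^n(p,q)$, writing each $\alpha\in\mathscr{I\!\!P\!F}(\mathbb{N}^n)$ as the pair $\bigl(\sigma_\alpha,[(\mathbf{x}_\alpha)\sigma_\alpha,\mathbf{y}_\alpha]\bigr)$, and by Theorem~\ref{theorem-2.20} the $\mathfrak{C}_{\mathbf{mg}}$-class of $\alpha$ is pinned down exactly by the pair $\bigl(\sigma_\alpha,(\mathbf{x}_\alpha)\sigma_\alpha-\mathbf{y}_\alpha\bigr)\in\mathscr{S}_n\times\mathbb{Z}^n$. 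Accordingly I would define
\begin{equation*}
\Upsilon\colon\mathscr{I\!\!P\!F}(\mathbb{N}^n)\to\mathscr{S}_n\ltimes_{\Theta}(\mathbb{Z}(+))^n,\qquad (\alpha)\Upsilon=\bigl(\sigma_\alpha,\,(\mathbf{x}_\alpha)\sigma_\alpha-\mathbf{y}_\alpha\bigr),
\end{equation*}
and show that $\Upsilon$ is a surjective homomorphism whose associated congruence coincides with $\mathfrak{C}_{\mathbf{mg}}$; the theorem then follows from the homomorphism theorem for semigroups.

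\emph{Homomorphism.} For $\alpha,\beta\in\mathscr{I\!\!P\!F}(\mathbb{N}^n)$ with $\operatorname{dom}\alpha={\uparrow}\mathbf{x}_\alpha$, $\operatorname{ran}\alpha={\uparrow}\mathbf{y}_\alpha$, $\operatorname{dom}\beta={\uparrow}\mathbf{x}_\beta$, $\operatorname{ran}\beta={\uparrow}\mathbf{y}_\beta$, the computation of $(\alpha\beta)\Psi$ in the proof of Theorem~\ref{theorem-2.19} (equivalently Lemma~\ref{lemma-2.15}) gives $\sigma_{\alpha\beta}=\sigma_\alpha\sigma_\beta$ together with
\begin{equation*}
(\mathbf{x}_{\alpha\beta})\sigma_{\alpha\beta}=\max\{(\mathbf{y}_\alpha)\sigma_\beta,(\mathbf{x}_\beta)\sigma_\beta\}-(\mathbf{y}_\alpha)\sigma_\beta+(\mathbf{x}_\alpha)\sigma_\alpha\sigma_\beta,\qquad \mathbf{y}_{\alpha\beta}=\max\{(\mathbf{y}_\alpha)\sigma_\beta,(\mathbf{x}_\beta)\sigma_\beta\}-(\mathbf{x}_\beta)\sigma_\beta+\mathbf{y}_\beta.
\end{equation*}
Subtracting, the common $\max$-term cancels, and using that permuting coordinates commutes with addition and subtraction of integer vectors (Lemma~\ref{lemma-2.13}, extended to $\mathbb{Z}^n$ as recorded before the theorem) one obtains
\begin{equation*}
(\mathbf{x}_{\alpha\beta})\sigma_{\alpha\beta}-\mathbf{y}_{\alpha\beta}=\bigl((\mathbf{x}_\alpha)\sigma_\alpha-\mathbf{y}_\alpha\bigr)\sigma_\beta+\bigl((\mathbf{x}_\beta)\sigma_\beta-\mathbf{y}_\beta\bigr),
\end{equation*}
which is exactly the $(\mathbb{Z}(+))^n$-component of $(\alpha)\Upsilon(\beta)\Upsilon$ in $\mathscr{S}_n\ltimes_{\Theta}(\mathbb{Z}(+))^n$; hence $(\alpha\beta)\Upsilon=(\alpha)\Upsilon(\beta)\Upsilon$. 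I expect this cancellation of the $\max$-terms to be the only genuine point of the argument: everything else is the bookkeeping already carried out for Theorem~\ref{theorem-2.19}, so the ``hard part'' is really just recognizing that passing to the difference kills the $\max$.

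\emph{Surjectivity and the induced congruence.} Given $(\sigma,\mathbf{z})\in\mathscr{S}_n\ltimes_{\Theta}(\mathbb{Z}(+))^n$, choose $\mathbf{y}\in\mathbb{N}^n$ with all coordinates so large that $\mathbf{z}+\mathbf{y}\in\mathbb{N}^n$, put $\mathbf{x}=(\mathbf{z}+\mathbf{y})\sigma^{-1}\in\mathbb{N}^n$, and take any $\alpha=\rho_\alpha\sigma\lambda_\alpha$ as in Lemma~\ref{lemma-2.14} with $\operatorname{dom}\alpha={\uparrow}\mathbf{x}$, $\operatorname{ran}\alpha={\uparrow}\mathbf{y}$ (such $\alpha$ exists since $\sigma$ is an order isomorphism of $(\mathbb{N}^n,\leqslant)$ by Lemma~\ref{lemma-2.3}); then $\sigma_\alpha=\sigma$ and $(\mathbf{x}_\alpha)\sigma_\alpha-\mathbf{y}_\alpha=(\mathbf{z}+\mathbf{y})-\mathbf{y}=\mathbf{z}$, so $(\alpha)\Upsilon=(\sigma,\mathbf{z})$ and $\Upsilon$ is onto. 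Finally, by the very definition of $\Upsilon$ we have $(\alpha)\Upsilon=(\beta)\Upsilon$ if and only if $\sigma_\alpha=\sigma_\beta$ and $(\mathbf{x}_\alpha)\sigma_\alpha-\mathbf{y}_\alpha=(\mathbf{x}_\beta)\sigma_\beta-\mathbf{y}_\beta$, which by Theorem~\ref{theorem-2.20} is precisely the relation $\alpha\mathfrak{C}_{\mathbf{mg}}\beta$. Hence the congruence induced by $\Upsilon$ equals $\mathfrak{C}_{\mathbf{mg}}$, and the homomorphism theorem yields $\mathscr{I\!\!P\!F}(\mathbb{N}^n)/\mathfrak{C}_{\mathbf{mg}}\cong\mathscr{S}_n\ltimes_{\Theta}(\mathbb{Z}(+))^n$. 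The only thing to keep straight throughout is the side convention for the action in the semidirect product, which must match the one fixed for $\ltimes_{\Phi}$ in Theorem~\ref{theorem-2.19} (the right-hand factor acts on the left one).
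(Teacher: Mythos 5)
Your proposal is correct and follows essentially the same route as the paper: the same map $\Upsilon\colon\alpha\mapsto\bigl(\sigma_\alpha,(\mathbf{x}_\alpha)\sigma_\alpha-\mathbf{y}_\alpha\bigr)$, the same cancellation of the $\max$-terms via Lemma~\ref{lemma-2.15}, and the identification of the induced congruence with $\mathfrak{C}_{\mathbf{mg}}$ via Theorem~\ref{theorem-2.20}. The only difference is that you spell out the surjectivity construction, which the paper dismisses as obvious.
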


\begin{proof}
We define a map $\Upsilon\colon\mathscr{I\!\!P\!F}(\mathbb{N}^n)\to \mathscr{S}_n\ltimes_{\Theta}(\mathbb{Z}(+))^n$ in the following way. If $\left(\sigma_\alpha,\left[(\mathbf{x}_\alpha)\sigma_\alpha,\mathbf{y}_\alpha\right]\right)=(\alpha)\Psi$ is the image of $\alpha\in\mathscr{I\!\!P\!F}(\mathbb{N}^n)$ under the isomorphism $\Psi\colon \mathscr{I\!\!P\!F}(\mathbb{N}^n)\to\mathscr{S}_n \ltimes_{\Phi}{\mathscr{C}}(p,q)^n$ which is defined in the proof of Theorem~\ref{theorem-2.19}, then we put $(\alpha)\Upsilon=\left(\sigma_\alpha,(\mathbf{x}_\alpha)\sigma_\alpha-\mathbf{y}_\alpha\right)$.

For any $\alpha,\beta \in\mathscr{I\!\!P\!F}(\mathbb{N}^n)$ with  $\operatorname{dom}\alpha={\uparrow}\mathbf{x}_\alpha$, $\operatorname{ran}\alpha={\uparrow}\mathbf{y}_\alpha$,  $\operatorname{dom}\beta={\uparrow}\mathbf{x}_\beta$, $\operatorname{ran}\beta={\uparrow}\mathbf{y}_\beta$,  by Lemma~\ref{lemma-2.15}, we have that
\begin{equation*}
\begin{split}
(\alpha\beta)\Upsilon=& \left(\sigma_\alpha\sigma_\beta, ((\max\{\mathbf{y}_\alpha,\mathbf{x}_\beta\}-\mathbf{y}_\alpha)\sigma_\alpha^{-1}+\mathbf{x}_\alpha)\sigma_\alpha\sigma_\beta- (\max\{\mathbf{y}_\alpha,\mathbf{x}_\beta\}-\mathbf{x}_\beta)\sigma_\beta-\mathbf{y}_\beta\right)=\\
=&\left(\sigma_\alpha\sigma_\beta,\max\{(\mathbf{y}_\alpha)\sigma_\beta,(\mathbf{x}_\beta)\sigma_\beta\}-(\mathbf{y}_\alpha)\sigma_\beta+ (\mathbf{x}_\alpha)\sigma_\alpha\sigma_\beta- \max\{(\mathbf{y}_\alpha)\sigma_\beta,(\mathbf{x}_\beta)\sigma_\beta\} +(\mathbf{x}_\beta)\sigma_\beta-\mathbf{y}_\beta\right)=\\
=&\left(\sigma_\alpha\sigma_\beta,(\mathbf{x}_\alpha)\sigma_\alpha\sigma_\beta-(\mathbf{y}_\alpha)\sigma_\beta+ (\mathbf{x}_\beta)\sigma_\beta-\mathbf{y}_\beta\right)=\\
=&\left(\sigma_\alpha\sigma_\beta,((\mathbf{x}_\alpha)\sigma_\alpha-\mathbf{y}_\alpha)\sigma_\beta+ ((\mathbf{x}_\beta)\sigma_\beta-\mathbf{y}_\beta)\right)=\\
=&\left(\sigma_\alpha,(\mathbf{x}_\alpha)\sigma_\alpha-\mathbf{y}_\alpha\right)\cdot\left(\sigma_\beta,(\mathbf{x}_\beta)\sigma_\beta-\mathbf{y}_\beta\right)=\\
=&\;(\alpha)\Upsilon\cdot(\beta)\Upsilon,
\end{split}
\end{equation*}
and hence $\Upsilon$ is a  homomorphism. It is obvious that the map $\Upsilon\colon\mathscr{I\!\!P\!F}(\mathbb{N}^n)\to \mathscr{S}_n\ltimes_{\Theta}(\mathbb{Z}(+))^n$ is surjective. Also, Theorem~\ref{theorem-2.20} implies that $\alpha\mathfrak{C}_{\mathbf{mg}} \beta$ in  $\mathscr{I\!\!P\!F}(\mathbb{N}^n)$ if and only if $(\alpha)\Upsilon=(\beta)\Upsilon$. This implies that the homomorphism $\Upsilon$ generates the congruences $\mathfrak{C}_{\mathbf{mg}}$ on $\mathscr{I\!\!P\!F}(\mathbb{N}^n)$.
\end{proof}

Every inverse semigroup $S$ admits a partial order:
\begin{equation*}
  a\preccurlyeq b \qquad \hbox{if and only if there exists} \qquad e\in E(S) \quad \hbox{such that} \quad a=b e, \qquad a,b \in S.
\end{equation*}
So defined order is called \emph{the natural partial order} on $S$. We observe that $a\preccurlyeq b$ in an inverse semigroup $S$ if and only if $a=f b$ for some $f\in E(S)$ (see \cite[Lemma~1.4.6]{Lawson-1998}).

If $\varepsilon$ is an idempotent in $\mathscr{I\!\!P\!F}(\mathbb{N}^n)$ then Proposition~\ref{proposition-2.16}$(i)$ and the definition of the isomorphism $\Psi\colon \mathscr{I\!\!P\!F}(\mathbb{N}^n)\to\mathscr{S}_n \ltimes_\Phi{\mathscr{C}}(p,q)^n$ imply that $\sigma_\varepsilon$ is the identity permutation and $\mathbf{x}_\varepsilon=\mathbf{y}_\varepsilon$.
Then for any $\alpha\in\mathscr{I\!\!P\!F}(\mathbb{N}^n)$ with $\left(\sigma_\alpha,\left[(\mathbf{x}_\alpha)\sigma_\alpha,\mathbf{y}_\alpha\right]\right)=(\alpha)\Psi$ we have that
\begin{equation}\label{eq-2.4}
\begin{split}
  \left(\sigma_\alpha{,}\left[(\mathbf{x}_\alpha)\sigma_\alpha,\mathbf{y}_\alpha\right]\right)& \left(\sigma_\varepsilon{,}\left[\mathbf{x}_\varepsilon{,}\mathbf{x}_\varepsilon\right]\right)= \\
=& \left(\sigma_\alpha\sigma_\varepsilon,\left[\max\{(\mathbf{y}_\alpha)\sigma_\varepsilon,\mathbf{x}_\varepsilon\}-(\mathbf{y}_\alpha)\sigma_\varepsilon +((\mathbf{x}_\alpha)\sigma_\alpha)\sigma_\varepsilon, \max\{(\mathbf{y}_\alpha)\sigma_\varepsilon,\mathbf{x}_\varepsilon\}-\mathbf{x}_\varepsilon+\mathbf{x}_\varepsilon\right]\right)= \\
   = & \left(\sigma_\alpha,\left[\max\{\mathbf{y}_\alpha,\mathbf{x}_\varepsilon\}-\mathbf{y}_\alpha+(\mathbf{x}_\alpha)\sigma_\alpha, \max\{\mathbf{y}_\alpha,\mathbf{x}_\varepsilon\}\right]\right),
\end{split}
\end{equation}

This implies the following proposition, which describes the natural partial order on the semigroup $\mathscr{I\!\!P\!F}(\mathbb{N}^n)$.

\begin{proposition}\label{proposition-2.22}
Let $n$ be an arbitrary positive integer and $\alpha,\beta\in\mathscr{I\!\!P\!F}(\mathbb{N}^n)$. Then the following conditions are equivalent:
\begin{itemize}
  \item[$(i)$] $\alpha\preccurlyeq\beta$;
  \item[$(ii)$] $\sigma_\alpha=\sigma_\beta$, $(\mathbf{x}_\alpha)\sigma_\alpha-\mathbf{y}_\alpha=(\mathbf{x}_\beta)\sigma_\beta-\mathbf{y}_\beta$ and $\mathbf{x}_\alpha\leqslant\mathbf{x}_\beta$ in the poset $\left(\mathbb{N}^n,\leqslant\right)$;
  \item[$(iii)$] $\sigma_\alpha=\sigma_\beta$, $(\mathbf{x}_\alpha)\sigma_\alpha-\mathbf{y}_\alpha=(\mathbf{x}_\beta)\sigma_\beta-\mathbf{y}_\beta$ and $\mathbf{y}_\alpha\leqslant\mathbf{y}_\beta$ in the poset $\left(\mathbb{N}^n,\leqslant\right)$.
\end{itemize}
\end{proposition}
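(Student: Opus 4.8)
The strategy is to transport everything through the isomorphism $\Psi\colon\mathscr{I\!\!P\!F}(\mathbb{N}^n)\to\mathscr{S}_n\ltimes_\Phi{\mathscr{C}}^n(p,q)$ and use the explicit product formula \eqref{eq-2.4}, which already records exactly what right-multiplication of $\alpha$ by an idempotent $\varepsilon$ (with $\sigma_\varepsilon=\mathrm{id}$, $\mathbf{x}_\varepsilon=\mathbf{y}_\varepsilon$) does. By the definition of the natural partial order recalled just before the proposition, $\alpha\preccurlyeq\beta$ means $\alpha=\beta\varepsilon$ for some idempotent $\varepsilon$. So I would first establish the implication $(i)\Rightarrow(ii)$: assuming $\alpha=\beta\varepsilon$ and reading off \eqref{eq-2.4} applied to $\beta$, one gets $\sigma_\alpha=\sigma_\beta$, then $\mathbf{y}_\alpha=\max\{\mathbf{y}_\beta,\mathbf{x}_\varepsilon\}$ — which forces $\mathbf{y}_\beta\leqslant\mathbf{y}_\alpha$ — and $(\mathbf{x}_\alpha)\sigma_\alpha=\max\{\mathbf{y}_\beta,\mathbf{x}_\varepsilon\}-\mathbf{y}_\beta+(\mathbf{x}_\beta)\sigma_\beta=\mathbf{y}_\alpha-\mathbf{y}_\beta+(\mathbf{x}_\beta)\sigma_\beta$. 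The last identity rearranges to $(\mathbf{x}_\alpha)\sigma_\alpha-\mathbf{y}_\alpha=(\mathbf{x}_\beta)\sigma_\beta-\mathbf{y}_\beta$, and combined with $\mathbf{y}_\beta\leqslant\mathbf{y}_\alpha$ this is condition $(iii)$; the inequality $\mathbf{x}_\alpha\leqslant\mathbf{x}_\beta$ of $(ii)$ then follows by applying $\sigma_\alpha^{-1}$ (which is monotone, being a coordinate permutation, cf. Lemma~\ref{lemma-2.3}) to $(\mathbf{x}_\alpha)\sigma_\alpha=(\mathbf{x}_\beta)\sigma_\beta-(\mathbf{y}_\alpha-\mathbf{y}_\beta)\leqslant(\mathbf{x}_\beta)\sigma_\beta$ together with $(\mathbf{x}_\beta)\sigma_\beta\sigma_\beta^{-1}=(\mathbf{x}_\beta)\sigma_\alpha^{-1}$.

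Next I would prove the converses $(ii)\Rightarrow(i)$ and $(iii)\Rightarrow(i)$. Given the data of $(iii)$ — so $\sigma_\alpha=\sigma_\beta$, the difference condition, and $\mathbf{y}_\alpha\leqslant\mathbf{y}_\beta$ — wait, the direction of the inequality must be checked: from $(i)\Rightarrow(iii)$ above we got $\mathbf{y}_\beta\leqslant\mathbf{y}_\alpha$, so in fact the correct reading of $(iii)$ as stated (with $\mathbf{y}_\alpha\leqslant\mathbf{y}_\beta$) must match; I would double-check the domain/range convention in Lemma~\ref{lemma-2.14} to pin down which of $\mathbf{y}_\alpha,\mathbf{y}_\beta$ is the larger, and then simply take $\varepsilon$ to be the idempotent with $\mathbf{x}_\varepsilon=\mathbf{y}_\varepsilon$ equal to the appropriate one of $\mathbf{y}_\alpha,\mathbf{y}_\beta$ (equivalently $\varepsilon$ the identity map of ${\uparrow}\mathbf{y}_\alpha$), and verify via \eqref{eq-2.4} that $\beta\varepsilon=\alpha$: since then $\max\{\mathbf{y}_\beta,\mathbf{x}_\varepsilon\}=\mathbf{y}_\alpha$, formula \eqref{eq-2.4} gives $\beta\varepsilon$ the permutation $\sigma_\beta=\sigma_\alpha$, second coordinate $\mathbf{y}_\alpha$, and first coordinate $\mathbf{y}_\alpha-\mathbf{y}_\beta+(\mathbf{x}_\beta)\sigma_\beta=(\mathbf{x}_\alpha)\sigma_\alpha$ by the difference condition — hence $\beta\varepsilon=\alpha$ by injectivity of $\Psi$. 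For $(ii)\Rightarrow(i)$ I would derive $(iii)$ from $(ii)$ purely algebraically: from $(\mathbf{x}_\alpha)\sigma_\alpha-\mathbf{y}_\alpha=(\mathbf{x}_\beta)\sigma_\beta-\mathbf{y}_\beta$ and $\mathbf{x}_\alpha\leqslant\mathbf{x}_\beta$ one gets $\mathbf{y}_\alpha-\mathbf{y}_\beta=(\mathbf{x}_\alpha)\sigma_\alpha-(\mathbf{x}_\beta)\sigma_\beta=((\mathbf{x}_\alpha-\mathbf{x}_\beta))\sigma_\alpha\leqslant\mathbf 0$ using Lemma~\ref{lemma-2.13}$(ii)$ applied after noting $\mathbf{x}_\alpha\leqslant\mathbf{x}_\beta$, so the inequality between $\mathbf{y}$'s is forced, closing the cycle of equivalences.

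The only genuine obstacle is bookkeeping: getting the direction of every inequality and the placement of $\sigma_\alpha$ versus $\sigma_\alpha^{-1}$ consistent with the conventions fixed in Lemmas~\ref{lemma-2.14} and~\ref{lemma-2.15} and with display \eqref{eq-2.4}. Once the product formula \eqref{eq-2.4} is in hand there is no real mathematical content beyond rearranging linear identities over $\mathbb{Z}^n$ and invoking that coordinate permutations are order automorphisms (Lemma~\ref{lemma-2.3}) and commute with $\max$ and with subtraction of comparable elements (Lemma~\ref{lemma-2.13}). I would therefore organize the write-up as: (a) translate $\preccurlyeq$ to $\alpha=\beta\varepsilon$ and substitute \eqref{eq-2.4}; (b) prove $(i)\Leftrightarrow(iii)$ directly from that formula; (c) prove $(ii)\Leftrightarrow(iii)$ by the $\mathbb{Z}^n$-arithmetic above. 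This keeps each step a two-line verification and isolates the one place where the conventions must be read carefully.
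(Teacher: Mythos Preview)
Your approach is exactly the paper's: the paper gives no detailed proof but simply asserts that formula~\eqref{eq-2.4} ``implies the following proposition''. Your plan to read $\alpha=\beta\varepsilon$ through $\Psi$ and \eqref{eq-2.4}, then equate components, is the natural unpacking of that one-line argument.

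Your hesitation about the inequality directions is well-founded, and in fact your computation already settles it---against the printed statement. From \eqref{eq-2.4} applied to $\beta$ you correctly obtain $\mathbf{y}_\alpha=\max\{\mathbf{y}_\beta,\mathbf{x}_\varepsilon\}$, hence $\mathbf{y}_\beta\leqslant\mathbf{y}_\alpha$. This is the opposite of the inequality written in $(iii)$. Consistently, since $\alpha\preccurlyeq\beta$ means $\alpha$ is a restriction of $\beta$ as a partial bijection, one has $\operatorname{dom}\alpha\subseteq\operatorname{dom}\beta$, i.e.\ ${\uparrow}\mathbf{x}_\alpha\subseteq{\uparrow}\mathbf{x}_\beta$, whence $\mathbf{x}_\beta\leqslant\mathbf{x}_\alpha$---again the reverse of $(ii)$. (You can also see this by testing idempotents against the convention fixed in Lemma~\ref{lemma-2.9}: there $\varepsilon\preccurlyeq\iota$ gives $\mathbf{x}_\iota\leqslant\mathbf{x}_\varepsilon$.) So the proposition as printed has both inequalities reversed; your argument proves the corrected version.

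One small algebraic slip in your sketch: in passing from the $\mathbf{y}$-inequality to the $\mathbf{x}$-inequality you wrote $(\mathbf{x}_\alpha)\sigma_\alpha=(\mathbf{x}_\beta)\sigma_\beta-(\mathbf{y}_\alpha-\mathbf{y}_\beta)$, but rearranging $(\mathbf{x}_\alpha)\sigma_\alpha-\mathbf{y}_\alpha=(\mathbf{x}_\beta)\sigma_\beta-\mathbf{y}_\beta$ gives $(\mathbf{x}_\alpha)\sigma_\alpha=(\mathbf{x}_\beta)\sigma_\beta+(\mathbf{y}_\alpha-\mathbf{y}_\beta)$ with a plus sign. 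With the correct sign and $\mathbf{y}_\alpha\geqslant\mathbf{y}_\beta$ you get $(\mathbf{x}_\alpha)\sigma_\alpha\geqslant(\mathbf{x}_\beta)\sigma_\beta$, hence $\mathbf{x}_\alpha\geqslant\mathbf{x}_\beta$ after applying the order-preserving $\sigma_\alpha^{-1}$, in agreement with the corrected statement. The converse $(iii)\Rightarrow(i)$ goes exactly as you indicate: take $\varepsilon$ with $\mathbf{x}_\varepsilon=\mathbf{y}_\alpha$ and verify $\beta\varepsilon=\alpha$ via \eqref{eq-2.4}.
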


An inverse semigroup $S$ is said to be \emph{$E$-unitary} if $a e\in E(S)$ for some $e\in E(S)$ implies that $a\in E(S)$ \cite{Lawson-1998}. $E$-unitary inverse semigroups were introduced by Siat\^{o} in \cite{Saito-1965}, where they were called ``\emph{proper ordered inverse semigroups}''.

Formula \eqref{eq-2.4} implies that if the element $\left(\sigma_\alpha{,}\left[(\mathbf{x}_\alpha)\sigma_\alpha,\mathbf{y}_\alpha\right]\right)\cdot \left(\sigma_\varepsilon,\left[\mathbf{x}_\varepsilon{,}\mathbf{x}_\varepsilon\right]\right)$ is an idempotent in the semidirect product $\mathscr{S}_n \ltimes_\Phi{\mathscr{C}}(p,q)^n$ then so is $\left(\sigma_\alpha{,}\left[(\mathbf{x}_\alpha)\sigma_\alpha,\mathbf{y}_\alpha\right]\right)$. This implies the following

\begin{corollary}\label{proposition-2.23}
For an arbitrary positive integer $n$ the inverse semigroup $\mathscr{I\!\!P\!F}(\mathbb{N}^n)$ is $E$-unitary.
\end{corollary}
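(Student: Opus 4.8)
The plan is to prove the $E$-unitarity of $\mathscr{I\!\!P\!F}(\mathbb{N}^n)$ by working inside the semidirect product $\mathscr{S}_n \ltimes_\Phi {\mathscr{C}}^n(p,q)$, via the isomorphism $\Psi$ of Theorem~\ref{theorem-2.19}, and exploiting formula~\eqref{eq-2.4}. Recall that, by definition, an inverse semigroup $S$ is $E$-unitary if $a e \in E(S)$ for some $e \in E(S)$ forces $a \in E(S)$. So I would take an arbitrary $\alpha \in \mathscr{I\!\!P\!F}(\mathbb{N}^n)$ and an arbitrary idempotent $\varepsilon \in E(\mathscr{I\!\!P\!F}(\mathbb{N}^n))$, assume that $\alpha\varepsilon$ is an idempotent, and deduce that $\alpha$ itself is an idempotent. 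By Proposition~\ref{proposition-2.16}$(i)$, it suffices to show that $\sigma_\alpha$ is the identity permutation and $\mathbf{x}_\alpha = \mathbf{y}_\alpha$.

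First I would translate everything through $\Psi$: write $(\alpha)\Psi = \left(\sigma_\alpha,\left[(\mathbf{x}_\alpha)\sigma_\alpha,\mathbf{y}_\alpha\right]\right)$ and, since $\varepsilon$ is idempotent, $(\varepsilon)\Psi = \left(\sigma_\varepsilon,\left[\mathbf{x}_\varepsilon,\mathbf{x}_\varepsilon\right]\right)$ with $\sigma_\varepsilon$ the identity permutation (Proposition~\ref{proposition-2.16}$(i)$). Then formula~\eqref{eq-2.4} gives the explicit form of the product $(\alpha\varepsilon)\Psi = \left(\sigma_\alpha,\left[\max\{\mathbf{y}_\alpha,\mathbf{x}_\varepsilon\}-\mathbf{y}_\alpha+(\mathbf{x}_\alpha)\sigma_\alpha,\ \max\{\mathbf{y}_\alpha,\mathbf{x}_\varepsilon\}\right]\right)$. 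Now apply the idempotency criterion of Proposition~\ref{proposition-2.16}$(i)$ to this element of the semidirect product: the product is an idempotent exactly when its first coordinate $\sigma_\alpha$ is the identity permutation and the two entries of the bracket coincide, i.e. $\max\{\mathbf{y}_\alpha,\mathbf{x}_\varepsilon\}-\mathbf{y}_\alpha+(\mathbf{x}_\alpha)\sigma_\alpha = \max\{\mathbf{y}_\alpha,\mathbf{x}_\varepsilon\}$. The first condition already yields $\sigma_\alpha = \mathrm{id}$. Substituting $\sigma_\alpha = \mathrm{id}$ into the second condition, it collapses to $(\mathbf{x}_\alpha)\sigma_\alpha = \mathbf{x}_\alpha = \mathbf{y}_\alpha$. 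Hence $(\alpha)\Psi$ satisfies the hypotheses of Proposition~\ref{proposition-2.16}$(i)$, so $\alpha$ is an idempotent, which is exactly what $E$-unitarity demands.

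I do not expect a genuine obstacle here: the real content has already been packaged into formula~\eqref{eq-2.4} and Proposition~\ref{proposition-2.16}$(i)$, and the corollary is a one-line observation once those are in place — indeed the excerpt essentially states this. The only point requiring a modicum of care is making sure the idempotency criterion of Proposition~\ref{proposition-2.16}$(i)$, which is phrased for elements of $\mathscr{I\!\!P\!F}(\mathbb{N}^n)$, is being applied to the element $(\alpha\varepsilon)\Psi$ of the semidirect product correctly; but since $\Psi$ is an isomorphism this is automatic, and in fact the cleanest exposition is to note that $\alpha\varepsilon$ being idempotent in $\mathscr{I\!\!P\!F}(\mathbb{N}^n)$ is equivalent, by $\Psi$ and Proposition~\ref{proposition-2.16}$(i)$, to $\sigma_{\alpha\varepsilon} = \sigma_\alpha$ being trivial and the two bracket components of~\eqref{eq-2.4} agreeing, from which $\sigma_\alpha = \mathrm{id}$ and $\mathbf{x}_\alpha = \mathbf{y}_\alpha$ follow, so $\alpha \in E(\mathscr{I\!\!P\!F}(\mathbb{N}^n))$.
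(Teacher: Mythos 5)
Your proposal is correct and follows essentially the same route as the paper: the paper's argument is precisely that formula~\eqref{eq-2.4}, together with the idempotency criterion of Proposition~\ref{proposition-2.16}$(i)$ transported through the isomorphism $\Psi$, forces $\sigma_\alpha$ to be the identity and $\mathbf{x}_\alpha=\mathbf{y}_\alpha$ whenever $\alpha\varepsilon$ is an idempotent. You have merely written out the cancellation $\max\{\mathbf{y}_\alpha,\mathbf{x}_\varepsilon\}-\mathbf{y}_\alpha+\mathbf{x}_\alpha=\max\{\mathbf{y}_\alpha,\mathbf{x}_\varepsilon\}$ explicitly, which the paper leaves implicit.
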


An inverse semigroup $S$ is called \emph{$F$-inverse}, if the $\mathfrak{C}_{\textsf{mg}}$-class $s_{\mathfrak{C}_{\textsf{mg}}}$ of each element $s$ has the top (biggest) element with the respect to the natural partial order on $S$ \cite{McFadden-Carroll-1971}.

\begin{proposition}\label{proposition-2.24}
For an arbitrary positive integer $n$ the semigroup $\mathscr{I\!\!P\!F}(\mathbb{N}^n)$ is an $F$-inverse semigroup.
\end{proposition}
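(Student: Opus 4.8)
The plan is to combine the description of the least group congruence $\mathfrak{C}_{\mathbf{mg}}$ given by Theorem~\ref{theorem-2.20} with the description of the natural partial order given by Proposition~\ref{proposition-2.22}, and to exhibit inside every $\mathfrak{C}_{\mathbf{mg}}$-class its greatest element. By Theorem~\ref{theorem-2.20}, the $\mathfrak{C}_{\mathbf{mg}}$-class of $\alpha\in\mathscr{I\!\!P\!F}(\mathbb{N}^n)$ is completely determined by the pair $\left(\sigma_\alpha,\mathbf{d}_\alpha\right)$, where $\mathbf{d}_\alpha=(\mathbf{x}_\alpha)\sigma_\alpha-\mathbf{y}_\alpha\in\mathbb{Z}^n$; conversely, for every $\sigma\in\mathscr{S}_n$ and every $\mathbf{d}\in\mathbb{Z}^n$ the set
\begin{equation*}
K_{(\sigma,\mathbf{d})}=\left\{\gamma\in\mathscr{I\!\!P\!F}(\mathbb{N}^n)\colon \sigma_\gamma=\sigma \ \hbox{and}\ (\mathbf{x}_\gamma)\sigma-\mathbf{y}_\gamma=\mathbf{d}\right\}
\end{equation*}
is either empty or one whole $\mathfrak{C}_{\mathbf{mg}}$-class.

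First I would parametrise $K_{(\sigma,\mathbf{d})}$ by the generator of the domain. By Lemma~\ref{lemma-2.14} an element of $K_{(\sigma,\mathbf{d})}$ is uniquely determined by its domain generator $\mathbf{x}$, its range generator being forced to equal $(\mathbf{x})\sigma-\mathbf{d}$; and such a $\gamma$ exists in $\mathscr{I\!\!P\!F}(\mathbb{N}^n)$ exactly when $(\mathbf{x})\sigma-\mathbf{d}\in\mathbb{N}^n$. Using Lemma~\ref{lemma-2.13} and the fact that a coordinate permutation preserves the order of $\left(\mathbb{N}^n,\leqslant\right)$, this last condition together with $\mathbf{x}\geqslant\mathbf{1}$ is equivalent to $\mathbf{x}$ lying in the principal filter ${\uparrow}\mathbf{x}^*$ of $\left(\mathbb{N}^n,\leqslant\right)$ generated by
\begin{equation*}
\mathbf{x}^*=\max\bigl\{\mathbf{1},(\mathbf{d}+\mathbf{1})\sigma^{-1}\bigr\}\in\mathbb{N}^n .
\end{equation*}
In particular $K_{(\sigma,\mathbf{d})}$ is non-empty, and I let $\gamma^*\in K_{(\sigma,\mathbf{d})}$ be the element with $\operatorname{dom}\gamma^*={\uparrow}\mathbf{x}^*$, $\operatorname{ran}\gamma^*={\uparrow}\bigl((\mathbf{x}^*)\sigma-\mathbf{d}\bigr)$ and $\sigma_{\gamma^*}=\sigma$.

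Then I would apply Proposition~\ref{proposition-2.22}. Any two elements of $K_{(\sigma,\mathbf{d})}$ already satisfy the first two of the three conditions appearing there, so for $\gamma_1,\gamma_2\in K_{(\sigma,\mathbf{d})}$ one has $\gamma_1\preccurlyeq\gamma_2$ if and only if $\operatorname{dom}\gamma_1\subseteq\operatorname{dom}\gamma_2$, i.e.\ $\mathbf{x}_{\gamma_2}\leqslant\mathbf{x}_{\gamma_1}$ in $\left(\mathbb{N}^n,\leqslant\right)$. Since $\mathbf{x}^*$ is the least element of the filter ${\uparrow}\mathbf{x}^*$ of admissible domain generators, $\mathbf{x}^*\leqslant\mathbf{x}_\gamma$ holds for every $\gamma\in K_{(\sigma,\mathbf{d})}$, whence $\gamma\preccurlyeq\gamma^*$ for all such $\gamma$. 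Thus $\gamma^*$ is the greatest element of the $\mathfrak{C}_{\mathbf{mg}}$-class $K_{(\sigma,\mathbf{d})}$ with respect to the natural partial order on $\mathscr{I\!\!P\!F}(\mathbb{N}^n)$; as $\sigma$ and $\mathbf{d}$ were arbitrary, this proves that $\mathscr{I\!\!P\!F}(\mathbb{N}^n)$ is an $F$-inverse semigroup. I do not expect a genuine obstacle: the only mildly delicate point is the bookkeeping showing that the admissible domain generators form a principal filter (so that a smallest one exists), and this is immediate once Theorem~\ref{theorem-2.20} and Proposition~\ref{proposition-2.22} are available.
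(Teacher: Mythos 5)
Your proof is correct, and it rests on the same two ingredients as the paper's own argument --- Theorem~\ref{theorem-2.20} (a $\mathfrak{C}_{\mathbf{mg}}$-class is determined by $\sigma_\alpha$ and $\mathbf{d}_\alpha=(\mathbf{x}_\alpha)\sigma_\alpha-\mathbf{y}_\alpha$) and Proposition~\ref{proposition-2.22} (inside such a class the natural order reduces to comparison of domains) --- but the construction of the top element is genuinely different. The paper starts from a fixed element $\beta_0=(\sigma,[\mathbf{x},\mathbf{y}])$ and moves along diagonal shifts $\beta_k=(\sigma,[\mathbf{x}-\mathbf{k},\mathbf{y}-\mathbf{k}])$ with a single scalar $k$, taking the largest admissible $k_0$; you instead describe the whole class at once, showing that the admissible domain generators form the principal filter ${\uparrow}\mathbf{x}^*$ with $\mathbf{x}^*=\max\{\mathbf{1},(\mathbf{d}+\mathbf{1})\sigma^{-1}\}$, and you take the element with generator $\mathbf{x}^*$. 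What your route buys is an explicit formula for the maximum of each class and, more importantly, a coordinatewise normalization: the diagonal shift can only subtract the same amount from every coordinate, so when the coordinatewise slacks are unequal it stops short of the actual top of the class (for $n=2$ the idempotent with domain ${\uparrow}(3,5)$ shifts diagonally only to the idempotent on ${\uparrow}(1,3)$, whereas the top of the idempotent class is $\mathbb{I}$); your per-coordinate argument is exactly what is needed at this point, so in effect it tightens the paper's proof rather than merely reproducing it. One remark on Proposition~\ref{proposition-2.22}: as printed it asserts $\mathbf{x}_\alpha\leqslant\mathbf{x}_\beta$ for $\alpha\preccurlyeq\beta$, but since the natural partial order on an inverse semigroup of partial maps is restriction, the correct reading is the one you use, namely $\gamma_1\preccurlyeq\gamma_2$ if and only if $\mathbf{x}_{\gamma_2}\leqslant\mathbf{x}_{\gamma_1}$ (equivalently $\operatorname{dom}\gamma_1\subseteq\operatorname{dom}\gamma_2$); your usage is the mathematically right one and is also what the paper's own proof implicitly relies on.
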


\begin{proof}
Fix an arbitrary element $\beta_0=(\sigma,[\mathbf{x},\mathbf{y}])\in \mathscr{S}_n \ltimes_\Phi{\mathscr{C}}(p,q)^n$, where $\mathbf{x}=(x_1,\ldots,x_n)$ and $\mathbf{y}=(y_1,\ldots,y_n)$. For every positive integer $k$ we put $\beta_k=(\sigma,[\mathbf{x}-\mathbf{k},\mathbf{y}-\mathbf{k}])$, where $\mathbf{x}-\mathbf{k}=(x_1-k,\ldots,x_n-k)$ and $\mathbf{y}-\mathbf{k}=(y_1-k,\ldots,y_n-k)$. It is obvious that there exists a (biggest) positive integer $k_0$ such that
\begin{equation*}
  (x_1-k_0,\ldots,x_n-k_0)\in\mathbb{N}^n \qquad \hbox{and} \qquad (y_1-k_0,\ldots,y_n-k_0)\in\mathbb{N}^n.
\end{equation*}
Then Theorem~\ref{theorem-2.20} and Proposition~\ref{proposition-2.22} imply that the element $\beta_{k_0}$ is the biggest element in the $\mathfrak{C}_{\mathbf{mg}}$-class of the element $\beta_0$ in $\mathscr{S}_n \ltimes_\Phi{\mathscr{C}}(p,q)^n$.
\end{proof}

\begin{proposition}\label{proposition-2.25}
For every $\alpha,\beta\in\mathscr{I\!\!P\!F}(\mathbb{N}^n)$, both sets
 $$
\{\chi\in\mathscr{I\!\!P\!F}(\mathbb{N}^n)\colon \alpha\cdot\chi=\beta\}
\qquad \hbox{and} \qquad
\{\chi\in\mathscr{I\!\!P\!F}(\mathbb{N}^n) \colon
\chi\cdot\alpha=\beta\}
 $$
are finite. Consequently, every right translation and every left translation by an element of the semigroup $\mathscr{I\!\!P\!F}(\mathbb{N}^n)$ is a finite-to-one map.
\end{proposition}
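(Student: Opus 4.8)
The plan is to solve the equation $\alpha\chi=\beta$ explicitly with the help of Lemma~\ref{lemma-2.15}, which computes $\operatorname{dom}(\alpha\chi)$, $\operatorname{ran}(\alpha\chi)$ and $\sigma_{\alpha\chi}$ in terms of the data of $\alpha$ and $\chi$, and to observe that an element of $\mathscr{I\!\!P\!F}(\mathbb{N}^n)$ is completely determined by its domain, its range and its associated permutation (Lemma~\ref{lemma-2.14}). So fix $\alpha,\beta$ with $\operatorname{dom}\alpha={\uparrow}\mathbf{x}$, $\operatorname{ran}\alpha={\uparrow}\mathbf{y}$, $\operatorname{dom}\beta={\uparrow}\mathbf{x}_\beta$, $\operatorname{ran}\beta={\uparrow}\mathbf{y}_\beta$, and let $\chi$ with $\operatorname{dom}\chi={\uparrow}\mathbf{u}$, $\operatorname{ran}\chi={\uparrow}\mathbf{v}$ satisfy $\alpha\chi=\beta$. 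By Lemma~\ref{lemma-2.15} this is equivalent to the three conditions $\sigma_\alpha\sigma_\chi=\sigma_\beta$, $(\max\{\mathbf{y},\mathbf{u}\}-\mathbf{y})\sigma_\alpha^{-1}+\mathbf{x}=\mathbf{x}_\beta$ and $(\max\{\mathbf{y},\mathbf{u}\}-\mathbf{u})\sigma_\chi+\mathbf{v}=\mathbf{y}_\beta$.

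The first condition determines $\sigma_\chi=\sigma_\alpha^{-1}\sigma_\beta$ uniquely. The second, after applying $\sigma_\alpha$ and using Lemma~\ref{lemma-2.13}, rewrites as $\max\{\mathbf{y},\mathbf{u}\}=\mathbf{y}+(\mathbf{x}_\beta-\mathbf{x})\sigma_\alpha$; since the left side is $\geqslant\mathbf{y}$, this has a solution only if $\mathbf{x}\leqslant\mathbf{x}_\beta$, and then $\mathbf{d}:=(\mathbf{x}_\beta-\mathbf{x})\sigma_\alpha\in\mathbb{N}_0^n$ is a fixed vector and the equation reads $\max\{\mathbf{y},\mathbf{u}\}=\mathbf{y}+\mathbf{d}$. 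Reading this coordinatewise, the $i$-th coordinate of $\mathbf{u}$ is forced to equal $y_i+d_i$ whenever $d_i>0$, while if $d_i=0$ the sole constraint is $u_i\leqslant y_i$, leaving at most $y_i$ values. Hence $\mathbf{u}$ ranges over a finite set (of size at most $\prod_{i=1}^n y_i$). Once $\sigma_\chi$ and $\mathbf{u}$ are fixed, the third condition determines $\mathbf{v}$ uniquely. As $\chi$ is determined by $(\operatorname{dom}\chi,\operatorname{ran}\chi,\sigma_\chi)=({\uparrow}\mathbf{u},{\uparrow}\mathbf{v},\sigma_\chi)$, we conclude that $\{\chi\in\mathscr{I\!\!P\!F}(\mathbb{N}^n)\colon \alpha\chi=\beta\}$ is finite.

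For the set $\{\chi\colon \chi\alpha=\beta\}$, recall that $\chi\mapsto\chi^{-1}$ is a bijection of the inverse semigroup $\mathscr{I\!\!P\!F}(\mathbb{N}^n)$ and that $\chi\alpha=\beta$ holds if and only if $\alpha^{-1}\chi^{-1}=\beta^{-1}$; thus this set is in bijection with $\{\psi\colon \alpha^{-1}\psi=\beta^{-1}\}$, which is finite by the previous paragraph. The final assertion is then immediate, since the preimage of an arbitrary $\beta$ under the right translation $\chi\mapsto\chi\alpha$ (resp.\ the left translation $\chi\mapsto\alpha\chi$) is precisely one of the two sets just shown to be finite. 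The only genuinely delicate point is the middle step, namely that the domain ${\uparrow}\mathbf{u}$ of a solution $\chi$ can take only finitely many values; the mechanism is that the ``compression'' coordinates of $\chi$ (those $i$ with $d_i=0$) are bounded above by the corresponding coordinates of the generator $\mathbf{y}$ of $\operatorname{ran}\alpha$, which rules out any infinite family of solutions — everything else is routine bookkeeping with the formulas of Lemma~\ref{lemma-2.15}.
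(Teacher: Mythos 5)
Your proof is correct, but it follows a genuinely different route from the paper. The paper's argument is a short order-theoretic one: from $\chi\cdot\alpha=\beta$ it passes to $\chi\cdot\alpha\alpha^{-1}=\beta\alpha^{-1}$, so every solution is comparable with the fixed element $\beta\alpha^{-1}$ with respect to the natural partial order, and the set of elements comparable with a fixed element is finite because (by the description of $\preccurlyeq$ in Proposition 2.22) such elements are parametrized by points of a principal ideal of $\left(\mathbb{N}^n,\leqslant\right)$, which is finite; the other set is treated ``similarly''. You instead solve the equation $\alpha\cdot\chi=\beta$ explicitly: using the multiplication formulas of Lemma 2.15 together with the uniqueness of the decomposition in Lemma 2.14, you pin down $\sigma_\chi=\sigma_\alpha^{-1}\sigma_\beta$, show the generator $\mathbf{u}$ of $\operatorname{dom}\chi$ is forced coordinatewise (determined where $d_i>0$, bounded by $y_i$ where $d_i=0$), and then $\mathbf{v}$ is determined, giving finiteness with an explicit bound $\prod_i y_i$; the left-sided equation is then handled cleanly by the inversion bijection $\chi\mapsto\chi^{-1}$ rather than by repeating the computation. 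Your approach is longer but self-contained (it does not invoke the natural partial order or Proposition 2.22, whose statement in the paper in fact carries a dubious inequality direction, and whose citation inside the paper's proof of this proposition is itself a misprint), and it yields quantitative information; the paper's approach is slicker and reuses already established structure. Both are valid proofs of the statement.
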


\begin{proof}
We  show that the set $A=\{\chi\in\mathscr{I\!\!P\!F}(\mathbb{N}^n) \colon
\chi\cdot\alpha=\beta\}$ is finite. The proof of the statement that the set $\{\chi\in\mathscr{I\!\!P\!F}(\mathbb{N}^n)\colon \alpha\cdot\chi=\beta\}$ is finite, is similar.

It is obvious that $A$ is a subset of the set $B=\{\chi\in\mathscr{I\!\!P\!F}(\mathbb{N}^n) \colon \chi\cdot\alpha\alpha^{-1}=\beta\alpha^{-1}\}$. Then $B$ is a subset of $C=\{\xi\in\mathscr{I\!\!P\!F}(\mathbb{N}^n) \colon \xi\preccurlyeq\beta\alpha^{-1}\}$. Since every principal ideal in the poset $\left(\mathbb{N}^n,\leqslant\right)$ is finite, Proposition~\ref{proposition-2.25} implies that $C$ is finite, and hence so is $A$.
\end{proof}

\section{On a semitopological semigroup $\mathscr{I\!\!P\!F}(\mathbb{N}^n)$}

The following theorem generalizes the Bertman-West result from \cite{Bertman-West-1976} (and hence Eberhart-Selden result from~\cite{Eberhart-Selden-1969}) for the semigroup $\mathscr{I\!\!P\!F}(\mathbb{N}^n)$.

\begin{theorem}\label{theorem-3.1}
For any positive integer $n$ every Hausdorff shift-continuous topology on the semigroup $\mathscr{I\!\!P\!F}(\mathbb{N}^n)$ is discrete.
\end{theorem}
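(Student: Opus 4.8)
The plan is to split the argument into two parts: first to prove that the identity $\mathbb{I}$ is an isolated point of any Hausdorff shift-continuous $\left(\mathscr{I\!\!P\!F}(\mathbb{N}^n),\tau\right)$, and then to bootstrap from a single isolated point to discreteness using that every translation is finite-to-one (Proposition~\ref{proposition-2.25}).

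For the first part I would argue by contradiction. If $\{\mathbb{I}\}$ is not open then $\mathbb{I}$ belongs to the closure of $\mathscr{I\!\!P\!F}(\mathbb{N}^n)\setminus\{\mathbb{I}\}$, so there is a net $(\alpha_d)$ in $\mathscr{I\!\!P\!F}(\mathbb{N}^n)\setminus\{\mathbb{I}\}$ with $\alpha_d\to\mathbb{I}$. Since $H(\mathbb{I})\cong\mathscr{S}_n$ is finite by Theorem~\ref{theorem-2.5} and a net converging in a finite Hausdorff space is eventually constant, the net cannot take values in $H(\mathbb{I})$ cofinally often (a cofinal subnet would be eventually equal to its limit $\mathbb{I}$, contradicting $\alpha_d\neq\mathbb{I}$); so, replacing $(\alpha_d)$ by a tail, I may assume $\operatorname{dom}\alpha_d\subsetneqq\mathbb{N}^n$ or $\operatorname{ran}\alpha_d\subsetneqq\mathbb{N}^n$ for every $d$, and, passing to a subnet, that one of these two alternatives holds throughout. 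Suppose $\operatorname{dom}\alpha_d\subsetneqq\mathbb{N}^n$; then the generator $\mathbf{x}_d$ of the principal filter $\operatorname{dom}\alpha_d$ has a coordinate $\geqslant2$, and since there are only $n$ coordinates a further subnet fixes an index $i$ with $\operatorname{dom}\alpha_d\subseteq{\uparrow}x_i^2$ for all $d$. Let $\varepsilon_i$ be the identity map of ${\uparrow}x_i^2$; it is an idempotent distinct from $\mathbb{I}$, and $\operatorname{dom}\alpha_d\subseteq\operatorname{dom}\varepsilon_i$ gives $\varepsilon_i\alpha_d=\alpha_d$ for all $d$. Continuity of left translation by $\varepsilon_i$ then yields $\alpha_d=\varepsilon_i\alpha_d\to\varepsilon_i\mathbb{I}=\varepsilon_i$, and, since also $\alpha_d\to\mathbb{I}$, the Hausdorff property forces $\varepsilon_i=\mathbb{I}$, which is absurd. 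The alternative $\operatorname{ran}\alpha_d\subsetneqq\mathbb{N}^n$ is handled symmetrically, using right translation by $\varepsilon_i$ and the identity $\alpha_d\varepsilon_i=\alpha_d$.

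For the second part, fix an arbitrary $\alpha\in\mathscr{I\!\!P\!F}(\mathbb{N}^n)$. Since $\mathscr{I\!\!P\!F}(\mathbb{N}^n)$ is a simple monoid (Proposition~\ref{proposition-2.1}$(vii)$), the ideal $\mathscr{I\!\!P\!F}(\mathbb{N}^n)\,\alpha\,\mathscr{I\!\!P\!F}(\mathbb{N}^n)$ is the whole semigroup, so there are $\mu,\nu$ with $\mu\alpha\nu=\mathbb{I}$. The map $\chi\mapsto\mu\chi\nu$ is a composition of a left and a right translation, hence continuous, and by two applications of Proposition~\ref{proposition-2.25} it is finite-to-one; therefore the preimage of the set $\{\mathbb{I}\}$ --- open by the first part --- is a finite open set containing $\alpha$. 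In a $T_1$-space a finite open set consists of isolated points (the complement of any of its points inside it is finite, hence closed), so $\{\alpha\}$ is open, and $\tau$ is discrete.

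The crux is the isolatedness of $\mathbb{I}$, and within it the observation that if $\operatorname{dom}\alpha_d$ (respectively $\operatorname{ran}\alpha_d$) sits inside a fixed proper principal filter then a suitable idempotent absorbs $\alpha_d$ on one side, turning the single continuity hypothesis $\alpha_d\to\mathbb{I}$ into a competing limit that Hausdorffness excludes. Reducing the net into $\mathscr{I\!\!P\!F}(\mathbb{N}^n)\setminus H(\mathbb{I})$ via finiteness of $\mathscr{S}_n$, and transporting neighbourhoods of $\mathbb{I}$ everywhere via the finite-to-one property of translations, are the two routine ingredients around it.
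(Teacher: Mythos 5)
Your proof is correct and takes essentially the same route as the paper: the identity is isolated by means of the idempotents $\varepsilon_i$ (the paper shows $H(\mathbb{I})$ is open as the complement of the closed ideals $\varepsilon_i\mathscr{I\!\!P\!F}(\mathbb{N}^n)$ and $\mathscr{I\!\!P\!F}(\mathbb{N}^n)\varepsilon_i$, while your net-absorption argument establishes the same fact directly), and then isolatedness is transported to an arbitrary element via simplicity and the finite-to-one translations of Proposition~\ref{proposition-2.25}, exactly as in the paper's second step. No gaps; the difference is only in how the openness of $\{\mathbb{I}\}$ is phrased.
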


\begin{proof}
Fix an arbitrary shift continuous Hausdorff topology $\tau$ on $\mathscr{I\!\!P\!F}(\mathbb{N}^n)$. Since for every idempotent $\varepsilon\in \mathscr{I\!\!P\!F}(\mathbb{N}^n)$ the left and right shifts $\mathfrak{l}_\varepsilon\colon \mathscr{I\!\!P\!F}(\mathbb{N}^n) \to \mathscr{I\!\!P\!F}(\mathbb{N}^n)\colon x\mapsto \varepsilon\cdot x$ and $\mathfrak{r}_\varepsilon\colon \mathscr{I\!\!P\!F}(\mathbb{N}^n) \to \mathscr{I\!\!P\!F}(\mathbb{N}^n)\colon x\mapsto x\cdot\varepsilon$ are continuous maps, the Hausdorffness of $\left(\mathscr{I\!\!P\!F}(\mathbb{N}^n),\tau\right)$ and \cite[1.5.c]{Engelking-1989} implies that the principal ideals $\varepsilon\mathscr{I\!\!P\!F}(\mathbb{N}^n)$ and $\mathscr{I\!\!P\!F}(\mathbb{N}^n)\varepsilon$ are closed subsets in $\left(\mathscr{I\!\!P\!F}(\mathbb{N}^n),\tau\right)$.

For any positive integer $i=1,\ldots,n$ put $\varepsilon_i$ is the identity map of the subset ${\uparrow}(1,\ldots,\underbrace{2}_{i\hbox{\footnotesize{-th}}},\ldots, 1)$ of the poset $\left(\mathbb{N}^n,\leqslant\right)$. It is clear that
\begin{equation*}
  H(\mathbb{I})=\mathscr{I\!\!P\!F}(\mathbb{N}^n)\setminus \left(\varepsilon_1\mathscr{I\!\!P\!F}(\mathbb{N})^n\cup\cdots \varepsilon_n\mathscr{I\!\!P\!F}(\mathbb{N}^n)\cup\mathscr{I\!\!P\!F}(\mathbb{N}^n)\varepsilon_1\cup\cdots\cup\mathscr{I\!\!P\!F}(\mathbb{N}^n)\varepsilon_n \right).
\end{equation*}
The above part of the proof implies that $H(\mathbb{I})$ is an open subset of $\left(\mathscr{I\!\!P\!F}(\mathbb{N}^n),\tau\right)$, and by Theorem~\ref{theorem-2.5}, $H(\mathbb{I})$ is a finite discrete open subspace of $\left(\mathscr{I\!\!P\!F}(\mathbb{N}^n),\tau\right)$.

Since  the semigroup $\mathscr{I\!\!P\!F}(\mathbb{N}^n)$ is simple (see Proposition~\ref{proposition-2.1}$(vii)$), for an arbitrary $\alpha\in\mathscr{I\!\!P\!F}(\mathbb{N}^n)$ there exist $\beta,\gamma\in \mathscr{I\!\!P\!F}(\mathbb{N}^n)$ such that $\beta\alpha\gamma=\mathbb{I}$. Then Proposition~\ref{proposition-2.25} implies that the point $\alpha$ has a finite open neighbourhood in $\left(\mathscr{I\!\!P\!F}(\mathbb{N}^n),\tau\right)$ and hence $\alpha$ is an isolated point in $\left(\mathscr{I\!\!P\!F}(\mathbb{N}^n),\tau\right)$.
\end{proof}

The following theorem generalizes Theorem~I.3 from \cite{Eberhart-Selden-1969}.

\begin{theorem}\label{theorem-3.2}
If for some positive integer $n$ the semigroup $\mathscr{I\!\!P\!F}(\mathbb{N}^n)$ is dense in a Hausdorff semitopological semigroup $(S,\cdot)$ and $I=S\setminus\mathscr{I\!\!P\!F}(\mathbb{N}^n)\neq\varnothing$, then $I$ is a two-sided ideal in $S$.
\end{theorem}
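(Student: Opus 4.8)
The plan is to adapt the Eberhart--Selden argument (Theorem~I.3 of \cite{Eberhart-Selden-1969}). The first step is to observe that $\mathscr{I\!\!P\!F}(\mathbb{N}^n)$ is an \emph{open} (discrete) subspace of $S$. Indeed, the topology which $\mathscr{I\!\!P\!F}(\mathbb{N}^n)$ inherits from the Hausdorff semitopological semigroup $S$ is again Hausdorff and shift-continuous, hence discrete by Theorem~\ref{theorem-3.1}; and a dense discrete subspace $T$ of a $T_1$-space $S$ necessarily consists of points that are isolated in $S$: if $U$ is open in $S$ with $U\cap T=\{p\}$, then $U\cap T$ is dense in $U$ (as $T$ is dense in $S$), so $\overline{\{p\}}\supseteq U$, and $\overline{\{p\}}=\{p\}$ forces $U=\{p\}$. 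Consequently $I=S\setminus\mathscr{I\!\!P\!F}(\mathbb{N}^n)$ is closed, every singleton $\{\gamma\}$ with $\gamma\in\mathscr{I\!\!P\!F}(\mathbb{N}^n)$ is open in $S$, and no point of $I$ is isolated in $S$.

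The core of the proof is then a two-step finiteness argument that uses Proposition~\ref{proposition-2.25} (every translation inside $\mathscr{I\!\!P\!F}(\mathbb{N}^n)$ is finite-to-one). To show $I\cdot S\subseteq I$, fix $s\in I$, $t\in S$ and suppose, towards a contradiction, that $st=\gamma\in\mathscr{I\!\!P\!F}(\mathbb{N}^n)$. Since $S$ is semitopological, the left shift $x\mapsto sx$ is continuous, so $W=\{x\in S\colon sx=\gamma\}$ is open and contains $t$; by density of $\mathscr{I\!\!P\!F}(\mathbb{N}^n)$ there is $\beta\in W\cap\mathscr{I\!\!P\!F}(\mathbb{N}^n)$, giving $s\beta=\gamma$ with $\beta\in\mathscr{I\!\!P\!F}(\mathbb{N}^n)$. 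Now the right shift $x\mapsto x\beta$ is continuous, so $V=\{x\in S\colon x\beta=\gamma\}$ is open and contains $s$; since $s\in I$ is not isolated in $S$ and $\mathscr{I\!\!P\!F}(\mathbb{N}^n)$ is dense, $V\cap\mathscr{I\!\!P\!F}(\mathbb{N}^n)$ must be infinite (were it finite, it would be closed, and deleting it from $V$ would give an open neighbourhood of $s$ disjoint from $\mathscr{I\!\!P\!F}(\mathbb{N}^n)$). But $V\cap\mathscr{I\!\!P\!F}(\mathbb{N}^n)\subseteq\{\chi\in\mathscr{I\!\!P\!F}(\mathbb{N}^n)\colon \chi\beta=\gamma\}$, which is finite by Proposition~\ref{proposition-2.25} --- a contradiction. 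Hence $st\in I$, so $I\cdot S\subseteq I$; exchanging the roles of the left and right shifts establishes $S\cdot I\subseteq I$ verbatim. Therefore $I$ is a two-sided ideal of $S$.

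I expect the only delicate point to be the passage from ``$\mathscr{I\!\!P\!F}(\mathbb{N}^n)$ is a discrete subspace of $S$'' to ``$\mathscr{I\!\!P\!F}(\mathbb{N}^n)$ is open in $S$'': a discrete subspace need not be open, and it is the density of $\mathscr{I\!\!P\!F}(\mathbb{N}^n)$ (together with the $T_1$ axiom) that makes this work. After that, the proof is the standard device of shrinking an open neighbourhood against a finite fibre, the finiteness of fibres being exactly Proposition~\ref{proposition-2.25}; note that only \emph{separate} continuity of the multiplication is used, so the result holds in the semitopological setting.
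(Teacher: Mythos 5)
Your proof is correct, and it rests on the same two pillars as the paper's argument: the finite-to-one property of translations (Proposition~\ref{proposition-2.25}) and the fact that, by Theorem~\ref{theorem-3.1} together with density, the points of $\mathscr{I\!\!P\!F}(\mathbb{N}^n)$ are isolated in $S$. The organization, however, differs. The paper proceeds in two stages: first it shows $x\cdot y\in I$ and $y\cdot x\in I$ for $x\in\mathscr{I\!\!P\!F}(\mathbb{N}^n)$, $y\in I$, by choosing $U(y)$ with $\{x\}\cdot U(y)=\{z\}$ and playing the finite fibre of Proposition~\ref{proposition-2.25} against density; then, for $x,y\in I$, it uses separate continuity at both points and the already established mixed case to conclude $\{x\}\cdot\bigl(U(y)\cap\mathscr{I\!\!P\!F}(\mathbb{N}^n)\bigr)\subseteq I$, contradicting $\{x\}\cdot U(y)=\{w\}$ with $w\notin I$. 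You instead prove $I\cdot S\subseteq I$ in one uniform step: density lets you replace the arbitrary factor $t\in S$ by a semigroup element $\beta$ with $s\beta=\gamma$, after which the finite-fibre argument applied to the right shift by $\beta$ gives the contradiction, so no case distinction between $t\in\mathscr{I\!\!P\!F}(\mathbb{N}^n)$ and $t\in I$ is needed, and the symmetric statement follows verbatim. A further small merit of your write-up is that you justify explicitly why singletons of $\mathscr{I\!\!P\!F}(\mathbb{N}^n)$ are open in $S$ (so that sets such as $\{x\in S\colon sx=\gamma\}$ are open); the paper uses this implicitly in the proof of Theorem~\ref{theorem-3.2} and records the openness of $\mathscr{I\!\!P\!F}(\mathbb{N}^n)$ only later, in Proposition~\ref{proposition-3.5}. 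Both arguments use only separate continuity, as required in the semitopological setting.
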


\begin{proof}
Fix an arbitrary element $y\in I$. If $x\cdot y=z\notin I$ for some $x\in\mathscr{I\!\!P\!F}(\mathbb{N}^n)$ then there exists an open neighbourhood $U(y)$ of the point $y$ in the space $S$ such that $\{x\}\cdot U(y)=\{z\}\subset\mathscr{I\!\!P\!F}(\mathbb{N}^n)$. By Proposition~\ref{proposition-2.25} the open neighbourhood $U(y)$ should contain finitely many elements of the semigroup $\mathscr{I\!\!P\!F}(\mathbb{N}^n)$ which contradicts our assumption. Hence $x\cdot y\in I$ for all $x\in \mathscr{I\!\!P\!F}(\mathbb{N}^n)$ and $y\in I$. The proof of the statement that $y\cdot x\in I$ for all $x\in\mathscr{I\!\!P\!F}(\mathbb{N}^n)$ and $y\in I$ is similar.

Suppose to the contrary that $x\cdot y=w\notin I$ for some $x,y\in I$. Then $w\in \mathscr{I\!\!P\!F}(\mathbb{N}^n)$ and the separate continuity of the semigroup operation in $S$ yields open neighbourhoods $U(x)$ and $U(y)$ of the points $x$ and $y$, respectively, such that $\{x\}\cdot U(y)=\{w\}$ and $U(x)\cdot \{y\}=\{w\}$. Since both neighbourhoods $U(x)$ and $U(y)$ contain infinitely many elements of the semigroup $\mathscr{I\!\!P\!F}(\mathbb{N}^n)$,  equalities $\{x\}\cdot U(y)=\{w\}$ and $U(x)\cdot \{y\}=\{w\}$ don't hold, because $\{x\}\cdot \left(U(y)\cap\mathscr{I\!\!P\!F}(\mathbb{N}^n)\right)\subseteq I$. The obtained contradiction implies that $x\cdot y\in I$.
\end{proof}

We recall that a topological space X is said to be:
\begin{itemize}
    \item \emph{compact} if every open cover of $X$ contains a finite subcover;
    \item \emph{countably compact} if each closed discrete subspace of $X$ is finite;
    \item \emph{feebly compact} if each locally finite open cover of $X$ is finite;
    \item \emph{pseudocompact} if $X$ is Tychonoff and each continuous real-valued function on $X$ is bounded.
\end{itemize}
According to Theorem~3.10.22 of \cite{Engelking-1989}, a Tychonoff topological space $X$ is feebly compact if and only if $X$ is pseudocompact. Also, a Hausdorff topological space $X$ is feebly compact if and only if every locally finite family of non-empty open subsets of $X$ is finite. Every compact space is countably compact and every countably compact space is feebly compact (see \cite{Arkhangelskii-1992}).

A topological semigroup $S$ is called
\emph{$\Gamma$-compact} if for every $x\in S$ the closure of the set
$\{x,x^2,x^3,\ldots\}$ is compact in $S$ (see
\cite{Hildebrant-Koch-1986}). Since  for every positive integer $n$ the semigroup $\mathscr{I\!\!P\!F}(\mathbb{N}^n)$ contains the bicyclic semigroup as a subsemigroup (see Proposition~\ref{proposition-2.7}), the results obtained in
\cite{Anderson-Hunter-Koch-1965}, \cite{Banakh-Dimitrova-Gutik-2009},
\cite{Banakh-Dimitrova-Gutik-2010}, \cite{Gutik-Repovs-2007},
\cite{Hildebrant-Koch-1986},  Theorems~\ref{theorem-2.12} and \ref{theorem-2.21}  imply the following

\begin{corollary}\label{corollary-3.4}
Let $n$ be an arbitrary non-negative integer. If a Hausdorff topological semigroup $S$ satisfies one of the following conditions:
\begin{itemize}
  \item[$(i)$] $S$ is compact;
  \item[$(ii)$] $S$ is $\Gamma$-compact;
  \item[$(iii)$] $S$ is a countably compact topological inverse semigroup;
  \item[$(iv)$] the square $S\times S$ is countably compact;
  \item[$(v)$] the square $S\times S$ is a Tychonoff pseudocompact space,
\end{itemize}
then $S$ does not contain the semigroup $\mathscr{I\!\!P\!F}(\mathbb{N}^n)$ and for every homomorphism $\mathfrak{h}\colon\mathscr{I\!\!P\!F}(\mathbb{N}^n)\to S$ the image $(\mathscr{I\!\!P\!F}(\mathbb{N}^n))\mathfrak{h}$ is a subgroup of $S$. Moreover, for every homomorphism $\mathfrak{h}\colon\mathscr{I\!\!P\!F}(\mathbb{N}^n)\to S$ there exists a unique homomorphism $\mathfrak{u}_\mathfrak{h}\colon \mathscr{S}_n\ltimes_{\Theta}(\mathbb{Z}(+))^n\to S$ such that the following diagram
\begin{equation*}
\xymatrix{
\mathscr{I\!\!P\!F}(\mathbb{N}^n)\ar[rr]^{\mathfrak{h}}\ar[dd]_\Upsilon && S\\
&&\\
\mathscr{S}_n\ltimes_{\Theta}(\mathbb{Z}(+))^n\ar[rruu]_{\mathfrak{u}_{\mathfrak{h}}}
}
\end{equation*}
commutes.
\end{corollary}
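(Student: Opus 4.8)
The plan is to reduce the whole statement to the classical non-embeddability of the bicyclic monoid, using the structure results already proved. First I would recall that, by Proposition~\ref{proposition-2.7}, for each positive integer $n$ the semigroup $\mathscr{I\!\!P\!F}(\mathbb{N}^n)$ contains a subsemigroup isomorphic to the bicyclic monoid $\mathscr{C}(p,q)$; hence any semigroup that contains an isomorphic copy of $\mathscr{I\!\!P\!F}(\mathbb{N}^n)$ also contains an isomorphic copy of $\mathscr{C}(p,q)$. Since none of the compact-like Hausdorff topological semigroups $S$ described in $(i)$--$(v)$ admits a bicyclic subsemigroup --- for $(i)$ and $(ii)$ by \cite{Anderson-Hunter-Koch-1965, Hildebrant-Koch-1986}, and for $(iii)$--$(v)$ by \cite{Banakh-Dimitrova-Gutik-2009, Banakh-Dimitrova-Gutik-2010, Gutik-Repovs-2007} --- it follows at once that such an $S$ contains no isomorphic copy of $\mathscr{I\!\!P\!F}(\mathbb{N}^n)$.

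Next I would take an arbitrary homomorphism $\mathfrak{h}\colon\mathscr{I\!\!P\!F}(\mathbb{N}^n)\to S$ and consider the congruence $\mathfrak{C}_{\mathfrak{h}}$ it induces on $\mathscr{I\!\!P\!F}(\mathbb{N}^n)$, so that $(\mathscr{I\!\!P\!F}(\mathbb{N}^n))\mathfrak{h}\cong\mathscr{I\!\!P\!F}(\mathbb{N}^n)/\mathfrak{C}_{\mathfrak{h}}$. If $\mathfrak{C}_{\mathfrak{h}}$ were the identity congruence, then $\mathfrak{h}$ would be an isomorphic embedding of $\mathscr{I\!\!P\!F}(\mathbb{N}^n)$ into $S$, contradicting the previous paragraph; hence $\mathfrak{C}_{\mathfrak{h}}$ is a non-identity congruence. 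By Theorem~\ref{theorem-2.12} for $n\geqslant 2$ --- and, for $n=1$, by the classical fact (see \cite[Corollary~1.32]{Clifford-Preston-1961-1967}) that every non-identity congruence on $\mathscr{C}(p,q)\cong\mathscr{I\!\!P\!F}(\mathbb{N})$ is a group congruence, recalling that $\mathscr{I\!\!P\!F}(\mathbb{N})$ is the bicyclic monoid by Theorem~\ref{theorem-2.19} --- the congruence $\mathfrak{C}_{\mathfrak{h}}$ is a group congruence. Thus $\mathscr{I\!\!P\!F}(\mathbb{N}^n)/\mathfrak{C}_{\mathfrak{h}}$ is a group and consequently $(\mathscr{I\!\!P\!F}(\mathbb{N}^n))\mathfrak{h}$ is a subgroup of $S$.

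For the universal factorization I would use that $\mathfrak{C}_{\mathbf{mg}}$, being the least group congruence, is contained in every group congruence, in particular $\mathfrak{C}_{\mathbf{mg}}\subseteq\mathfrak{C}_{\mathfrak{h}}$. By the proof of Theorem~\ref{theorem-2.21}, the homomorphism $\Upsilon\colon\mathscr{I\!\!P\!F}(\mathbb{N}^n)\to\mathscr{S}_n\ltimes_{\Theta}(\mathbb{Z}(+))^n$ is surjective and induces precisely the congruence $\mathfrak{C}_{\mathbf{mg}}$, so $\Upsilon$ is, up to the isomorphism of Theorem~\ref{theorem-2.21}, the canonical projection modulo $\mathfrak{C}_{\mathbf{mg}}$. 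The universal property of quotient semigroups then produces a unique homomorphism $\mathfrak{u}_{\mathfrak{h}}\colon\mathscr{S}_n\ltimes_{\Theta}(\mathbb{Z}(+))^n\to S$ with $\mathfrak{h}=\Upsilon\mathfrak{u}_{\mathfrak{h}}$, uniqueness being forced by the surjectivity of $\Upsilon$; this is exactly the commuting diagram asserted. I expect the only real obstacle to be bookkeeping: pairing each of the five compactness hypotheses with the exact form of the ``no bicyclic subsemigroup'' statement available in the cited literature, and keeping the small case $n=1$ (where $\mathscr{I\!\!P\!F}(\mathbb{N}^n)$ is itself the bicyclic monoid, so that Theorem~\ref{theorem-2.12} is replaced by the classical result) separate from the case $n\geqslant 2$. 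Everything past the first paragraph is then a routine algebraic consequence of Theorems~\ref{theorem-2.12} and~\ref{theorem-2.21}.
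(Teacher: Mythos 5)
Your proposal is correct and follows essentially the same route the paper intends: Proposition~\ref{proposition-2.7} supplies the bicyclic subsemigroup, the cited results on compact-like semigroups exclude an embedded copy, Theorem~\ref{theorem-2.12} makes the induced congruence of any homomorphism a group congruence, and Theorem~\ref{theorem-2.21} together with the minimality of $\mathfrak{C}_{\mathbf{mg}}$ yields the unique factorization through $\Upsilon$. Your explicit treatment of the case $n=1$ via the classical bicyclic congruence result is a detail the paper leaves implicit (Theorem~\ref{theorem-2.12} is stated only for $n\geqslant 2$), but it is the right way to cover it.
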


\begin{proposition}\label{proposition-3.5}
Let $n$ be an arbitrary positive integer. Let $S$ be a Hausdorff topological semigroup which contains $\mathscr{I\!\!P\!F}(\mathbb{N}^n)$ as a dense subsemigroup. Then for every $c\in\mathscr{I\!\!P\!F}(\mathbb{N}^n)$ the set
\begin{equation*}
    D_c=\left\{(x,y)\in\mathscr{I\!\!P\!F}(\mathbb{N}^n)\times\mathscr{I\!\!P\!F}(\mathbb{N}^n)\colon x\cdot y=c\right\}
\end{equation*}
is an open-and-closed subset of $S\times S$.
\end{proposition}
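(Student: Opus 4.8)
The plan is to exhibit $\mathscr{I\!\!P\!F}(\mathbb{N}^n)$ as an open discrete subspace of $S$ and then to identify $D_c$ with $\mu^{-1}(\{c\})$, where $\mu\colon S\times S\to S$ denotes the (jointly continuous) multiplication of $S$.

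If $I=\varnothing$, then $S=\mathscr{I\!\!P\!F}(\mathbb{N}^n)$ is a Hausdorff semitopological semigroup, hence discrete by Theorem~\ref{theorem-3.1}, and the statement is trivial; so I would assume $I\neq\varnothing$, and then Theorem~\ref{theorem-3.2} guarantees that $I$ is a two-sided ideal of $S$. The decisive step is to observe that the subspace topology induced on $\mathscr{I\!\!P\!F}(\mathbb{N}^n)$ from $S$ makes it a Hausdorff \emph{semitopological} semigroup: for every $a\in\mathscr{I\!\!P\!F}(\mathbb{N}^n)$ the maps $x\mapsto ax$ and $x\mapsto xa$ are the (co)restrictions to the subsemigroup $\mathscr{I\!\!P\!F}(\mathbb{N}^n)$ of the continuous one-sided translations of $S$, hence continuous on $\mathscr{I\!\!P\!F}(\mathbb{N}^n)$. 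By Theorem~\ref{theorem-3.1} this subspace topology must be discrete. Combining discreteness with density and the $T_1$ separation axiom: given $c\in\mathscr{I\!\!P\!F}(\mathbb{N}^n)$, choose an open $V\subseteq S$ with $V\cap\mathscr{I\!\!P\!F}(\mathbb{N}^n)=\{c\}$; if $V\neq\{c\}$, then $V\setminus\{c\}$ is a nonempty open subset of $S$ disjoint from $\mathscr{I\!\!P\!F}(\mathbb{N}^n)$, contradicting density. Hence $\{c\}$ is open in $S$, and, $S$ being Hausdorff, also closed; so $\{c\}$ is a clopen subset of $S$ for every $c\in\mathscr{I\!\!P\!F}(\mathbb{N}^n)$.

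To finish, since $\mu$ is continuous and $\{c\}$ is clopen in $S$, the preimage $\mu^{-1}(\{c\})$ is clopen in $S\times S$, and it remains to check $D_c=\mu^{-1}(\{c\})$. The inclusion $D_c\subseteq\mu^{-1}(\{c\})$ is immediate. For the reverse inclusion take $(x,y)\in S\times S$ with $xy=c$; since $I$ is a two-sided ideal and $c\notin I$, neither $x$ nor $y$ can lie in $I$ (if, say, $x\in I$, then $xy\in I$), so $x,y\in\mathscr{I\!\!P\!F}(\mathbb{N}^n)$ and $(x,y)\in D_c$. Therefore $D_c=\mu^{-1}(\{c\})$ is open-and-closed in $S\times S$.

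The one genuine obstacle is the middle step: recognising that the trace topology on $\mathscr{I\!\!P\!F}(\mathbb{N}^n)$ is again shift-continuous (so that Theorem~\ref{theorem-3.1} forces it to be discrete) and that density upgrades ``discrete subspace'' to ``open subspace''; after that the argument is purely formal. An alternative to invoking the subspace as a semitopological semigroup is to replay the mechanism of the proof of Theorem~\ref{theorem-3.1} inside $S$: for every idempotent $\varepsilon$ the sets $\varepsilon S=\{x\in S\colon \varepsilon x=x\}$ and $S\varepsilon=\{x\in S\colon x\varepsilon=x\}$ are closed (equalisers of continuous maps into the Hausdorff space $S$), so $S\setminus\bigcup_{i=1}^{n}(\varepsilon_i S\cup S\varepsilon_i)$ is open and meets $\mathscr{I\!\!P\!F}(\mathbb{N}^n)$ exactly in the finite group of units $H(\mathbb{I})$ (Theorem~\ref{theorem-2.5}); density then forces this open set to be finite, hence $\mathbb{I}$ is isolated in $S$, and simplicity of $\mathscr{I\!\!P\!F}(\mathbb{N}^n)$ (Proposition~\ref{proposition-2.1}) together with Proposition~\ref{proposition-2.25} propagates isolation to every point $c$, again yielding that $\{c\}$ is clopen in $S$.
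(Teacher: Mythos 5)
Your proposal is correct and takes essentially the same route as the paper: discreteness of the trace topology on $\mathscr{I\!\!P\!F}(\mathbb{N}^n)$ via Theorem~\ref{theorem-3.1}, density upgrading this to openness (isolated points) in $S$, and Theorem~\ref{theorem-3.2} supplying the ideal $I$ so that no product involving a point of $I$ can equal $c$. The only difference is packaging: the paper proves openness and closedness of $D_c$ separately (closedness by an accumulation-point argument), while you identify $D_c$ outright with $\mu^{-1}(\{c\})$ for the clopen singleton $\{c\}$, which yields both properties at once.
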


\begin{proof}
By Theorem~\ref{theorem-3.1}, $\mathscr{I\!\!P\!F}(\mathbb{N}^n)$ is a discrete subspace of $S$ and hence Theorem~3.3.9 of \cite{Engelking-1989} implies that $\mathscr{I\!\!P\!F}(\mathbb{N}^n)$ is an open subspace of $S$. Then the continuity of the semigroup operation of $S$ implies that $D_c$ is an open subset of $S\times S$ for every $c\in\mathscr{I\!\!P\!F}(\mathbb{N}^n)$.

Suppose that there exists $c\in\mathscr{I\!\!P\!F}(\mathbb{N}^n)$ such that $D_c$ is a non-closed subset of $S\times S$. Then there exists an accumulation point $(a,b)\in S\times S$ of the set $D_c$. The continuity of the semigroup operation in $S$ implies that $a\cdot b=c$. But $\mathscr{I\!\!P\!F}(\mathbb{N}^n)\times \mathscr{I\!\!P\!F}(\mathbb{N}^n)$ is a discrete subspace of $S\times S$ and hence by Theorem~\ref{theorem-3.2} the points $a$ and $b$ belong to the two-sided
ideal $I=S\setminus \mathscr{I\!\!P\!F}(\mathbb{N}^n)$. This implies that the product $a\cdot b\in S\setminus \mathscr{I\!\!P\!F}(\mathbb{N}^n)$ cannot be equal to the element $c$.
\end{proof}

\begin{theorem}\label{theorem-3.6}
Let $n$ be an arbitrary positive integer. If a Hausdorff topological semigroup $S$ contains $\mathscr{I\!\!P\!F}(\mathbb{N}^n)$ as a dense subsemigroup then the square $S\times S$ is not feebly compact.
\end{theorem}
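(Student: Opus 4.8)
The plan is to produce an infinite \emph{discrete} family of non-empty open subsets of $S\times S$; since, by the characterization of feeble compactness recalled above, a Hausdorff space admitting such a family is not feebly compact, this yields the theorem. First I would fix a convenient bicyclic subsemigroup inside $\mathscr{I\!\!P\!F}(\mathbb{N}^n)$, namely the map $\gamma$ defined in the proof of Lemma~\ref{lemma-2.9}, so that $\operatorname{dom}\gamma=\mathbb{N}^n$, $\operatorname{ran}\gamma={\uparrow}(2,\ldots,2)$, $(z_1,\ldots,z_n)\gamma=(z_1+1,\ldots,z_n+1)$, $\langle\gamma,\gamma^{-1}\rangle\cong{\mathscr{C}}(p,q)$ by Proposition~\ref{proposition-2.7}, and $\gamma\gamma^{-1}=\mathbb{I}$. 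Then $\gamma^{k}(\gamma^{-1})^{k}=\mathbb{I}$ for every $k\in\mathbb{N}$, while the elements $\gamma^{k}$, and hence the pairs $c_k:=(\gamma^{k},(\gamma^{-1})^{k})$, are pairwise distinct; in particular $c_k\in D_{\mathbb{I}}$ for all $k$, where $D_{\mathbb{I}}$ is the set associated with $\mathbb{I}\in\mathscr{I\!\!P\!F}(\mathbb{N}^n)$ in Proposition~\ref{proposition-3.5}. As in the proof of Proposition~\ref{proposition-3.5}, Theorem~\ref{theorem-3.1} gives that $\mathscr{I\!\!P\!F}(\mathbb{N}^n)$ is an open discrete subspace of $S$, hence $\mathscr{I\!\!P\!F}(\mathbb{N}^n)\times\mathscr{I\!\!P\!F}(\mathbb{N}^n)$ is an open discrete subspace of $S\times S$, so each singleton $O_k:=\{c_k\}$ is open in $S\times S$.

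The core step is to show that the set $A:=\{c_k\colon k\in\mathbb{N}\}$ has no accumulation point in $S\times S$, hence is closed and discrete. A point of $\mathscr{I\!\!P\!F}(\mathbb{N}^n)\times\mathscr{I\!\!P\!F}(\mathbb{N}^n)$ is isolated in $S\times S$ and so is not an accumulation point of anything; therefore any accumulation point $(a,b)$ of $A$ satisfies $a\notin\mathscr{I\!\!P\!F}(\mathbb{N}^n)$ or $b\notin\mathscr{I\!\!P\!F}(\mathbb{N}^n)$, i.e.\ $a\in I$ or $b\in I$, where $I:=S\setminus\mathscr{I\!\!P\!F}(\mathbb{N}^n)$. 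Since $A\subseteq D_{\mathbb{I}}$ and, by Proposition~\ref{proposition-3.5}, $D_{\mathbb{I}}$ is closed in $S\times S$, we get $(a,b)\in D_{\mathbb{I}}$, that is $a\cdot b=\mathbb{I}$. On the other hand $I\neq\varnothing$ (it contains $a$ or $b$), so by Theorem~\ref{theorem-3.2} the set $I$ is a two-sided ideal of $S$, whence $a\cdot b\in I$; this contradicts $\mathbb{I}\notin I$. Thus $A$ has no accumulation point, the family $\{O_k\colon k\in\mathbb{N}\}$ is an infinite discrete (hence locally finite) family of non-empty open subsets of $S\times S$, and consequently $S\times S$ is not feebly compact.

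I expect the argument to be short once the right family is chosen; the only delicate point is the dichotomy in the core step: accumulation points inside $\mathscr{I\!\!P\!F}(\mathbb{N}^n)\times\mathscr{I\!\!P\!F}(\mathbb{N}^n)$ are excluded by the discreteness provided by Theorem~\ref{theorem-3.1}, and accumulation points with a coordinate in $I$ are excluded by combining the ideal property (Theorem~\ref{theorem-3.2}) with the closedness of $D_{\mathbb{I}}$ (Proposition~\ref{proposition-3.5}). An essentially equivalent route bypasses Proposition~\ref{proposition-3.5} by invoking joint continuity of the multiplication directly: if $c_{k_\mu}\to(a,b)$ along a net in which the $k_\mu$ are pairwise distinct, then $\gamma^{k_\mu}(\gamma^{-1})^{k_\mu}=\mathbb{I}\to a\cdot b$, forcing $a\cdot b=\mathbb{I}$, which again is impossible since $a\in I$ or $b\in I$ and $I$ is an ideal.
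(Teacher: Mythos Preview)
Your proof is correct and follows essentially the same strategy as the paper: exhibit an infinite open, closed, and discrete subset of $S\times S$ sitting inside $D_{\mathbb{I}}$, built from a bicyclic subsemigroup of $\mathscr{I\!\!P\!F}(\mathbb{N}^n)$, and conclude via the locally finite characterization of feeble compactness. The paper is terser---it simply notes that $D_{\mathbb{I}}$ itself is an infinite open-and-closed discrete subspace---while you spell out the ``no accumulation point'' argument in detail; one small redundancy in your write-up is the appeal to Theorem~\ref{theorem-3.2}: once you deduce $(a,b)\in D_{\mathbb{I}}$ from the closedness in Proposition~\ref{proposition-3.5}, you already have $a,b\in\mathscr{I\!\!P\!F}(\mathbb{N}^n)$, which directly contradicts $a\in I$ or $b\in I$ without invoking the ideal property.
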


\begin{proof}
By Proposition~\ref{proposition-3.5}, for every $c\in\mathscr{I\!\!P\!F}(\mathbb{N}^n)$ the square $S\times S$ contains an open-and-closed discrete subspace $D_c$. In the case when $c$ is the unit $\mathbb{I}$ of $\mathscr{I\!\!P\!F}(\mathbb{N}^n)$, by Corollary~\ref{corollary-2.8}, there exists a subsemigroup ${\mathscr{C}}$ of $\mathscr{I\!\!P\!F}(\mathbb{N}^n)$ which is isomorphic to the bicyclic monoid ${\mathscr{C}}(p,q)$ and contains $\mathbb{I}$. If we identify the elements of ${\mathscr{C}}$  with the elements the bicyclic monoid ${\mathscr{C}}(p,q)$ by an isomorphism $\mathfrak{h}\colon {\mathscr{C}}(p,q)\to \mathscr{C}$ then the subspace $D_c$ contains an infinite subset $\left\{\left((q^i)\mathfrak{h},(p^i)\mathfrak{h}\right)\colon i\in\mathbb{N}_0\right\}$ and hence the set $D_c$ is infinite. This implies that the square $S\times S$ is not feebly compact.
\end{proof}

\section*{Acknowledgements}

The authors are grateful to the referee and the editor for several useful comments and suggestions.

\end{document}